\newtheorem{theorem}{Theorem}[section]
\newtheorem{proposition}[theorem]{Proposition}
\newtheorem{lemma}[theorem]{Lemma}
\def\ker{\mathop{\mathrm{Ker}}\nolimits}
\def\im{\mathop{\mathrm{Im}}\nolimits}
\def\rank{\mathop{\mathrm{rank}}\nolimits}
\def\dom{\mathop{\mathrm{Dom}}\nolimits}
\def\fix{\mathop{\mathrm{Fix}}\nolimits}
\def\endo{\mathop{\mathrm{End}}\nolimits}
\def\auto{\mathop{\mathrm{Aut}}\nolimits}
\def\inn{\mathop{\mathrm{Inn}}\nolimits}
\def\nor{\mathrm{N}}
\def\N{\mathbb N}
\def\id{\mathrm{id}}
\def\O{\mathcal{O}}
\def\PT{\mathcal{PT}}
\def\T{\mathcal{T}}
\def\Sym{\mathcal{S}}
\def\I{\mathcal{I}}
\def\C{\mathcal{C}}
\def\D{\mathcal{D}}
\def\PO{\mathcal{PO}}
\def\POI{\mathcal{POI}}
\def\OD{\mathcal{OD}}
\def\PODI{\mathcal{PODI}}
\def\POD{\mathcal{POD}}
\def\OP{\mathcal{OP}}
\def\POPI{\mathcal{POPI}}
\def\POP{\mathcal{POP}}
\def\OR{\mathcal{OR}}
\def\PORI{\mathcal{PORI}}
\def\POR{\mathcal{POR}}
\def\J{\mathscr{J}}
\def\L{\mathscr{L}}
\def\H{\mathscr{H}} 
\def\varwidetilde#1{\widetilde{#1\,}} 
\def\ov#1{\overline{#1}} 
\newcommand{\lastpage}{\addresss}
\newcommand{\addresss}{\small \sf  

\noindent{\sc De Biao Li}, 
School of Mathematics and Statistics, 
Lanzhou University, 
Lanzhou 730000, 
Gansu, 
P. R. China.\\
e-mail: lidb19@lzu.edu.cn

\medskip

\noindent{\sc V\'\i tor H. Fernandes}, 
Center for Mathematics and Applications (CMA), 
FCT NOVA and Department of Mathematics, FCT NOVA, 
Faculdade de Ci\^encias e Tecnologia, 
Universidade Nova de Lisboa, 
Monte da Caparica, 
2829-516 Caparica, 
Portugal. 
e-mail: vhf@fct.unl.pt
}
\author{De Biao Li and V\'\i tor H. Fernandes\footnote{This work is funded by national funds through the FCT - Funda\c c\~ao para a Ci\^encia e a Tecnologia, 
I.P., under the scope of the projects UIDB/00297/2020 and UIDP/00297/2020 (Center for Mathematics and Applications).}
}
\title{Endomorphisms of semigroups of oriented transformations}
\begin{document}

\maketitle

\vspace*{-1cm}

\begin{abstract}
In this paper, we characterize the monoid of endomorphisms of the semigroup of all oriented full transformations of a finite chain, as well as the monoid of endomorphisms of the semigroup of all oriented partial transformations 
and the monoid of endomorphisms of the semigroup of all oriented partial permutations of a finite chain. 
Characterizations of the monoids of endomorphisms of the subsemigroups of all orientation-preserving transformations of the three semigroups aforementioned are also given. 
In addition, we compute the number of endomorphisms of each of these six semigroups. 
\end{abstract}

\medskip

\noindent{\small 2020 \it Mathematics subject classification: \rm 20M10, 20M20}

\noindent{\small\it Keywords: \rm orientation-preserving, orientation-reversing, oriented, transformations, endomorphisms.}

\section*{Introduction} 

Let $\Omega$ be a set. We denote by $\PT(\Omega)$ the monoid (under composition) of all 
partial transformations on $\Omega$, by $\T(\Omega)$ the submonoid of $\PT(\Omega)$ of all 
full transformations on $\Omega$, by $\I(\Omega)$ 
the \textit{symmetric inverse monoid} on $\Omega$, i.e. 
the inverse submonoid of $\PT(\Omega)$ of all 
partial permutations on $\Omega$, 
and by $\Sym(\Omega)$ the \textit{symmetric group} on $\Omega$, 
i.e. the subgroup of $\PT(\Omega)$ of all 
permutations on $\Omega$. 
For $n\in\N$, let $\Omega_n$ be a set with $n$ elements. 
As usual, we denote $\PT(\Omega_n)$, $\T(\Omega_n)$, $\I(\Omega_n)$ and $\Sym(\Omega_n)$ 
simply by $\PT_n$, $\T_n$, $\I_n$ and $\Sym_n$, respectively. 

\smallskip 

Next, suppose that $\Omega_n$ is a chain, e.g. $\Omega_n=\{1<2<\cdots<n\}$. 

A partial transformation $\alpha\in\PT_n$ is called \textit{order-preserving}  
[respectively, \textit{order-reversing}] 
if $x\leqslant y$ implies $x\alpha\leqslant y\alpha$, 
[respectively, $x\alpha\geqslant y\alpha$] 
for all $x,y \in \dom(\alpha)$.  
An order-preserving or order-reversing transformation is also called \textit{monotone}. 
We denote by $\PO_n$ [respectively, $\POD_n$] 
the submonoid of $\PT_n$ of all order-preserving [respectively, monotone] 
partial transformations, by $\O_n$ [respectively, $\OD_n$] the submonoid of $\T_n$ of all order-preserving [respectively, monotone]  
full transformations and by $\POI_n$ [respectively, $\PODI_n$]  the submonoid of $\I_n$ of all order-preserving [respectively, monotone] 
partial permutations. 

Let $s=(a_1,a_2,\ldots,a_t)$
be a sequence of $t$ ($t\geqslant0$) elements
from the chain $\Omega_n$. 
We say that $s$ is \textit{cyclic} 
[respectively, \textit{anti-cyclic}] if there
exists no more than one index $i\in\{1,\ldots,t\}$ such that
$a_i>a_{i+1}$ [respectively, $a_i<a_{i+1}$],
where $a_{t+1}$ denotes $a_1$.
Notice that, the sequence $s$ is cyclic
[respectively, anti-cyclic] if and only if $s$ is empty or there exists
$i\in\{0,1,\ldots,t-1\}$ such that 
$a_{i+1}\leqslant a_{i+2}\leqslant \cdots\leqslant a_t\leqslant a_1\leqslant \cdots\leqslant a_i $ 
[respectively, $a_{i+1}\geqslant a_{i+2}\geqslant \cdots\geqslant a_t\geqslant a_1\geqslant \cdots\geqslant a_i $] (the index
$i\in\{0,1,\ldots,t-1\}$ is unique unless $s$ is constant and
$t\geqslant2$). We also say that $s$ is \textit{oriented} if $s$ is cyclic or $s$ is anti-cyclic. 
See \cite{Catarino&Higgins:1999,Higgins&Vernitski:2022,McAlister:1998}. 

Given a partial transformation $\alpha\in\PT_n$ such that
$\dom(\alpha)=\{a_1<\cdots<a_t\}$, with $t\geqslant0$, we 
say that $\alpha$ is \textit{orientation-preserving} 
[respectively, \textit{orientation-reversing}, \textit{oriented}] if the sequence of its images
$(a_1\alpha,\ldots,a_t\alpha)$ is cyclic [respectively, anti-cyclic, oriented].  
We denote by $\POP_n$ the submonoid of $\PT_n$ 
of all orientation-preserving
transformations and by $\POR_n$ the
submonoid of $\PT_n$ of all
oriented  transformations. 
It is easy to show 
that the product of two orientation-preserving or of two
orientation-reversing transformations is orientation-preserving and
the product of an orientation-preserving transformation by an
orientation-reversing transformation, or vice-versa, is 
orientation-reversing.

Notice that $\PO_n\subseteq\POD_n\subseteq\POR_n$ and 
$\PO_n\subseteq\POP_n \subseteq\POR_n$, by definition.

Denote by $\OP_n$ the submonoid of $\PT_n$ 
of all orientation-preserving full 
transformations and by $\OR_n$ the
submonoid of $\PT_n$ of all
oriented full transformations, 
i.e. $\OP_n=\POP_n\cap\T_n$ and $\OR_n=\POR_n\cap\T_n$. 
Consider also the submonoids $\POPI_n=\POP_n\cap\I_n$ and $\PORI_n=\POR_n\cap\I_n$ of $\PT_n$.

Notice that, by definition, we have $\O_n\subseteq\OD_n\subseteq\OR_n$,  
$\O_n\subseteq\OP_n \subseteq\OR_n$, 
$\POI_n\subseteq\PODI_n\subseteq\PORI_n$ and  
$\POI_n\subseteq\POPI_n \subseteq\PORI_n$.

\smallskip 

Let us consider the following permutations of $\Omega_n$ of order $n$ and $2$, respectively: 
$$
g=\begin{pmatrix} 
1&2&\cdots&n-1&n\\
2&3&\cdots&n&1
\end{pmatrix} 
\quad\text{and}\quad  
h=\begin{pmatrix} 
1&2&\cdots&n-1&n\\
n&n-1&\cdots&2&1
\end{pmatrix}. 
$$
It is clear that $g$ preserves the orientation and $h$ reverses the order, whence $g$ and $h$ are both oriented permutations.  
Moreover, for $n\geqslant3$, $g$ together with $h$ generate the well-known \textit{dihedral group} $\D_{2n}$ of order $2n$ 
(considered as a subgroup of $\Sym_n$). In fact, we have 
$$
\D_{2n}=\langle g,h\mid g^n=h^2=1, gh=hg^{n-1}\rangle=\{1,g,g^2,\ldots,g^{n-1}, h,hg,hg^2,\ldots,hg^{n-1}\}. 
$$
Denote also by $\C_n$ the \textit{cyclic group} of order $n$ generated by $g$, i.e. $\C_n=\{1,g,g^2,\ldots,g^{n-1}\}$.  

\smallskip 

The Hasse diagram in Figure \ref{diagram}, with respect to the inclusion relation and
where {\bf1} denotes the trivial monoid, clarifies the
relationship between these various semigroups. 
\begin{figure}[ht]
\begin{center}
\begin{picture}(130,150)(0,35)
\unitlength=1.15pt
{
\put(117.5,107.5){$\bullet$} \put(122.5,107.5){\scriptsize $\T_n$}
\put(67.5,157.5){$\bullet$} \put(75,157.5){\scriptsize $\PT_n$}
\put(17.5,107.5){$\bullet$} \put(7,107.5){\scriptsize $\I_n$}
\put(120,110){\line(-1,1){50}} \put(120,90){\line(0,1){20}}
\put(70,140){\line(0,1){20}} \put(20,110){\line(1,1){50}}
\put(20,90){\line(0,1){20}}
\put(67.5,57.5){$\bullet$} 
\put(70,60){\line(1,1){50}}
\put(70,60){\line(-1,1){50}} \put(70,60){\line(0,-1){20}}
\put(72.5,57.5){\scriptsize $\mathcal{S}_n$}
\put(67.5,47.5){$\bullet$} 
\put(73.0,47.5){\scriptsize \boldmath$\mathcal{D}_{2n}$}
\put(70.0,50){\line(5,4){51}}
\put(70.0,50){\line(-5,4){51}}
}
\put(67.5,37.5){$\bullet$} \put(70,40){\line(1,1){30}}
\put(70,40){\line(-1,1){30}} \put(74,37.5){\scriptsize \boldmath$\mathcal{C}_n$}
\put(37.5,67.5){$\bullet$} \put(40,70){\line(1,1){30}}
\put(40,70){\line(-1,1){20}} 
\put(45,69){\scriptsize \boldmath$\mathcal{POPI}_n$}
\put(97.5,67.5){$\bullet$} \put(100,70){\line(-1,1){30}}
\put(100,70){\line(1,1){20}} 
\put(103,67.5){\scriptsize \boldmath$\mathcal{OP}_n$}
\put(17.5,87.5){$\bullet$} \put(20,90){\line(1,1){50}}
\put(-12,87.5){\scriptsize \boldmath$\mathcal{PORI}_n$}
\put(117.5,87.5){$\bullet$} \put(120,90){\line(-1,1){50}}
\put(123,87.5){\scriptsize \boldmath$\mathcal{OR}_n$}
\put(67.5,97.5){$\bullet$} \put(70,100){\line(0,1){40}}
\put(73.5,98){\scriptsize \boldmath$\mathcal{POP}_n$}
\put(68,138){$\bullet$} 
\put(57,142.7){\scriptsize \boldmath$\mathcal{POR}_n$}
{
\put(78,18.5){$\bullet$} \put(80,20){\line(1,1){30}}
\put(80,20){\line(-1,1){30}} \put(84,17){\scriptsize {\bf1}}
{
\put(80,20){\line(-1,2){10}} }
\put(47.5,47.5){$\bullet$} \put(50,50){\line(1,1){30}}
\put(50,50){\line(-1,1){20}} \put(25,47.5){\scriptsize
$\mathcal{POI}_n$}
{
\put(50,50){\line(-1,2){10}} }
\put(107.5,47.5){$\bullet$} \put(110,50){\line(-1,1){30}}
\put(110,50){\line(1,1){20}} \put(113.5,47.5){\scriptsize
$\mathcal{O}_n$}
{
\put(110,50){\line(-1,2){10}} }
\put(27.5,67.5){$\bullet$} \put(30,70){\line(1,1){50}}
\put(-2,67.5){\scriptsize $\mathcal{PODI}_n$}
{
\put(30,70){\line(-1,2){10}} }
\put(127.5,67.5){$\bullet$} \put(130,70){\line(-1,1){50}}
\put(132.5,67.5){\scriptsize $\mathcal{OD}_n$}
{
\put(130,70){\line(-1,2){10}} }
\put(77,77){$\bullet$} \put(80,80){\line(0,1){40}}
\put(60,78.5){\scriptsize $\mathcal{PO}_n$}
{
\put(80,80){\line(-1,2){10}} }
\put(77.5,117.5){$\bullet$} \put(54,118.5){\scriptsize
$\mathcal{POD}_n$}
{
\put(80,120){\line(-1,2){10}} } }
\end{picture}
\end{center}
\caption{Hasse diagram with respect to the inclusion relation}
\label{diagram}
\end{figure}

\smallskip 

Observe that the inclusion relation represented in Figure \ref{diagram} may not be strict for some values of $n$. 
For instance, $\OR_2=\OD_2=\T_2$,  $\PORI_2=\PODI_2=\I_2$ and $\POR_2=\POD_2=\PT_2$. 

\smallskip 

The notion of an orientation-preserving
transformation on a finite chain was introduced by McAlister in \cite{McAlister:1998} and,
independently, by Catarino and Higgins in \cite{Catarino&Higgins:1999}. 
Several properties of the monoid $\OP_n$ were investigated in these two papers. 
Since then, semigroups of orientated transformations have been studied in many other papers. 
See, for example, 
\cite{Araujo&etal:2011,
Arthur&Ruskuc:2000,
Catarino:1998,
Delgado&Fernandes:2000, 
Dimitrova&al:2012, 
East&al:2018,
Fernandes:2000,
Fernandes&Gomes&Jesus:2009,  
Fernandes&Gomes&Jesus:2011,
Fernandes&al:2016,
Fernandes&Quinteiro:2011,
Higgins&Vernitski:2022,
Zhao&Fernandes:2015}. 

\smallskip 

Characterizing endomorphisms of transformation semigroups is a classical problem. 
For instance, this was done by Schein and Teclezghi \cite{Schein&Teclezghi:1997,Schein&Teclezghi:1998,Schein&Teclezghi:1998b} 
for $\I_n$ in 1997 and for $\T_n$ and $\PT_n$ in 1998,  
and by Mazorchuk \cite{Mazorchuk:2002} for the Brauer-type semigroups in 2002. 
More recently, 
Fernandes et al. \cite{Fernandes&al:2010} found a description of the endomorphisms of $\O_n$,  
Fernandes and Santos \cite{Fernandes&Santos:2019} determined the endomorphisms of $\POI_n$ and $\PO_n$ and 
Fernandes and Li \cite{Fernandes&Li:2022sub} gave a characterization of the endomorphisms of $\OD_n$, $\PODI_n$ and $\POD_n$. 
Descriptions of automorphisms of semigroups of order-preserving transformations and of some of their extensions, such as semigroups of monotone transformations or semigroups of oriented transformations, can be found in \cite{Araujo&etal:2011}. 

In this paper, we complete the picture by giving descriptions of the monoids of endomorphisms of the remain semigroups of the diagram of Figure \ref{diagram}, 
namely of $\OP_n$, $\POPI_n$, $\POP_n$, $\OR_n$, $\PORI_n$ and $\POR_n$. 
Moreover, we also determine the number of endomorphisms of each of these six semigroups. 

\medskip 

Throughout this paper, whenever not mentioned explicitly, we consider $n\geqslant3$. 

\section{Preliminaries} 

Let $S$ be a semigroup. For completion, we recall the definition of
the Green equivalence relations $\mathscr{R}$, $\mathscr{L}$, $\mathscr{H}$ and
$\mathscr{J}$: for all $s, t\in S$,
\begin{description}
\item $s\mathscr{R} t$ if and only if  $sS^1=tS^1$, 
\item $s\mathscr{L} t$ if and only if $S^1s=S^1t$, 
\item $s\mathscr{H} t$ if and only if $s\mathscr{L} t$ and $s\mathscr{R} t$, 
\item $s\mathscr{J} t$ if and only if $S^1sS^1=S^1tS^1$
\end{description}
(as usual, $S^1$ denotes $S$ with identity adjoined \textit{if necessary}). 
Associated to Green's relation $\mathscr{J}$ there is a quasi-order
$\leqslant_ \mathscr{J}$ on $S$ defined by
$$
s\leqslant_ \mathscr{J} t \text{ if and only if }  S^1sS^1\subseteq S^1tS^1,
$$
for all $s, t\in S$. Notice that, for every $s, t\in S$, we have
$s\,\mathscr{J}\,t$ if and only if $s\leqslant_ \mathscr{J}t$ and
$t\leqslant_ \mathscr{J}s$. Denote by $J_{s}$ the $\mathscr{J}$-class
of the element $s\in S$. As usual, a partial order relation
$\leqslant_ \mathscr{J}$ is defined on the set $S/\mathscr {J}$ by
setting $J_{s}\leqslant_ \mathscr{J}J_{t}$ if and only if
$s\leqslant_ \mathscr{J}t$, for all $s, t\in S$. For $s, t\in S$, we
write $s<_\mathscr{J}t$ and also $J_{s}<_\mathscr{J}J_{t}$ if
and only if $s\leqslant_ \mathscr{J}t$ and $(s, t)\not\in\mathscr{J}$. 
Recall that any endomorphism of semigroups preserves Green's relations and the quasi-order $\leqslant_ \mathscr{J}$. 

\smallskip 

Given a semigroup $S$ and a subset $X$ of $S$, we denote by $E(X)$ the set of idempotents of $S$ that belong to $X$.
An \textit{ideal} of $S$ is a subset $I$ of $S$ such that
$S^1IS^1\subseteq I$. By convenience, we admit the empty set as an
ideal. A \textit{Rees congruence} of $S$ is a congruence associated
to an ideal of $S$: if $I$ is an ideal of $S$, the Rees congruence
$\rho_I$ is defined by $(s,t)\in\rho_I$ if and only if $s=t$ or
$s,t\in I$, for all $s, t\in S$. 
The group of automorphisms of $S$ and the monoid of endomorphisms of $S$ are denoted by 
$\auto(S)$ and $\endo(S)$, respectively. 

\smallskip 

For general background on semigroups and standard notations, we refer the reader to Howie's book \cite{Howie:1995}.

\medskip 

Let $S\in\{\O_n,\POI_n,\PO_n,\OP_n,\POPI_n,\POP_n,\OR_n,\PORI_n,\POR_n,\T_n,\I_n,\PT_n\}$. 
We have the following descriptions of Green's relations in the semigroup $S$: 
\begin{description}
\item $s\mathscr{L} t$ if and only if $\im(s)=\im(t)$, 
\item $s\mathscr{R} t$ if and only if $\ker(s)=\ker(t)$, 
\item $s\mathscr{J} t$ if and only if $|\im(s)|=|\im(t)|$, and 
\item $s\mathscr{H} t$ if and only if $\ker(s)=\ker(t)$ and  $\im(s)=\im(t)$,
\end{description} 
for all $s,t\in S$. If $S\in\{\POI_n,\POPI_n,\PORI_n,\I_n\}$, for Green's relation $\mathscr{R}$, we have, even more simply, 
\begin{description}
\item
$s\mathscr{R} t$ if and only if $\dom(s)=\dom(t)$, 
\end{description} 
for all $s,t\in S$.

\smallskip 

Recall that
$\OP_n=\langle \O_n\cup\{g\}\rangle$, 
$\POPI_n=\langle\POI_n\cup\{g\}\rangle$, 
$\POP_n=\langle \PO_n\cup\{g\}\rangle$, 
$\OR_n=\langle \OP_n\cup\{h\}\rangle = \langle \O_n\cup\{g,h\}\rangle$, 
$\PORI_n=\langle\POPI_n\cup\{h\}\rangle=\langle \POI_n\cup\{g,h\}\rangle$ 
and $\POR_n=\langle \POP_n\cup\{h\}\rangle=\langle \PO_n\cup\{g,h\}\rangle$.  
More specifically, if $T\in\{\O_n, \POI_n, \PO_n\}$ and $x\in\langle T\cup\{g\}\rangle$ [respectively, $x\in\langle T\cup\{g,h\}\rangle$] then 
there exist $t\in T$ and $u\in\C_n=\langle g \rangle$ [respectively, $u\in\D_{2n}=\langle g,h \rangle$] such that $x=ut$. 

Moreover, for $T\in\{\O_n, \POI_n, \PO_n\}$, we have that $T$, $\langle T\cup\{g\}\rangle$ and $\langle T\cup\{g,h\}\rangle$
are regular monoids and $E(\langle T\cup\{g\}\rangle)=E(\langle T\cup\{g,h\}\rangle)$.  Furthermore, 
$\POI_n$, $\POPI_n$ and $\PORI_n$ are inverse monoids and 
$E(\POI_n)=E(\POPI_n)=E(\PORI_n)=E(\I_n)=\{\id|_X\mid X\subseteq\Omega_n\}$. 

Recall also that, for the partial order $\leqslant_ \mathscr{J}$ on $S$, the 
quotient $S/\mathscr{J}$ is a chain with $n$ elements
for $S\in\{\O_n,\OP_n,\OR_n,\T_n\}$ and with $n+1$ elements for the remain cases of $S$.
More precisely, 
$$
S/\mathscr{J}=\{J_0^S <_\mathscr{J} J_1^S <_\mathscr{J} \cdots
<_\mathscr{J} J_n^S \}
$$ 
when $S\in\{\POI_n,\PO_n,\POPI_n,\PORI_n,\POP_n,\POR_n,\I_n,\PT_n\}$, and
$$
S/\mathscr{J}=\{J_1^S <_\mathscr{J} \cdots <_\mathscr{J} J_n^S\}
$$ 
when $S\in\{\O_n,\OP_n,\OR_n,\T_n\}$. Here 
$$
J_k^S=\{s\in S\mid |\im(s)|=k\},
$$ 
with $k$ suitably defined. 
For $0\leqslant  k\leqslant  n$, let 
$I_k^S=\{s\in S\mid |\im(s)|\leqslant k\}$. 
Clearly $I_k^S$ is an ideal of $S$. Since $S/\mathscr{J}$ is a
chain, it follows that
$$
\{I_k^S\mid 0\leqslant  k\leqslant  n\}
$$
is the set of all ideals of $S$ (see \cite{Fernandes:2001}). 

Let $\alpha\in\PT_n$. Recall that the \textit{rank} of $\alpha$, denoted by $\rank(\alpha)$, is the size of $\im(\alpha)$. 

Observe also that $\O_n$, $\POI_n$ and $\PO_n$ are aperiodic monoids (i.e. with only trivial subgroups); the
group $\mathscr{H}$-classes of rank $k$ of 
$\OP_n$, $\POPI_n$ and $\POP_n$ are cyclic groups of order $k$, for $1\leqslant k\leqslant n$; and the
group $\mathscr{H}$-classes of rank $k$ of 
$\OR_n$, $\PORI_n$ and $\POR_n$
are dihedral groups of order $2k$, for $3\leqslant k\leqslant n$, and cyclic groups of order $k$, for $k=1,2$.
In particular, $\C_n$ is the group of units of $\OP_n$, $\POPI_n$ and $\POP_n$ and, 
on the other hand, $\D_{2n}$ is the group of units of $\OR_n$, $\PORI_n$ and $\POR_n$. 

See \cite{Fernandes:2000,Fernandes:2001,Fernandes&Gomes&Jesus:2009,Ganyushkin&Mazorchuk:2009,Gomes&Howie:1992}. 

\smallskip 

Let $S$ be a finite semigroup and let $J$ be a $\mathscr{J}$-class
of $S$. Denote by $B(J)$ the set of all elements $s\in S$ such
that $J\not\leqslant_ {\mathscr{J}}J_s$. It is clear that $B(J)$ is an ideal of $S$.
We associate to $J$ a relation
$\pi_J$ on $S$ defined by: for all $s,t\in S$, we have $s\,\pi_J\, t$ if
and only if
\begin{description}
  \item (a)
  $s=t$; or
  \item (b)
  $s,t\in B(J)$; or
  \item (c) 
  $s,t\in J$ and $s\,\mathscr{H}\,t$.
\end{description}
Then the relation $\pi_J$ is a congruence on $S$ (see \cite{Fernandes:2000}). 

Assume that $J$ is regular and take a group $\mathscr{H}$-class
$H_0$ of $J$. Also, suppose that there exists a mapping
$$\begin{array}{lllll}
\varepsilon&:&J&\longrightarrow&H_0\\
&&s&\longmapsto&\tilde{s}
\end{array}
$$
which satisfies the
following property: given $s,t\in J$ such that $st\in J$,
there exist $x,y\in H_0$ such that
\begin{eqnarray*}
\mbox{$b\,\mathscr{H}\,t$ implies $\varwidetilde{sb}=x\tilde{s}\tilde{b}$}\\
\mbox{$a\,\mathscr{H}\,s$ implies
$\varwidetilde{at}=\tilde{a}\tilde{t}y$}.
\end{eqnarray*}

To each congruence $\pi$ on $H_0$, we associate a relation
$\rho^J_\pi$ on $S$ defined by: given $s,t\in S$, we have 
$$
\mbox{$s\,\rho^J_\pi\,t$
if and only if $s\,\pi_J\, t$ and $s,t\in J$ implies
$\tilde{s}\,\pi\,\tilde{t}$.}
$$
Therefore, the relation $\rho^J_\pi$ is also a congruence on $S$ (see \cite{Fernandes&Gomes&Jesus:2009}). 
Moreover, Fernandes et al. proved in \cite{Fernandes&Gomes&Jesus:2009}:

\begin{theorem} [{\cite[Theorem 3.3]{Fernandes&Gomes&Jesus:2009}}] \label{con} 
The congruences of $S\in\{\OP_n,\POPI_n,\POP_n,\OR_n,\PORI_n,\POR_n\}$ are exactly the
congruences $\rho^{J^S_k}_\pi$, where $\pi$ is a congruence on a fixed maximal subgroup 
contained in $J^S_k$, for
$k\in\{1,2,\ldots,n\}$, and the universal congruence.
\end{theorem}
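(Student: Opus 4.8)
The plan is to prove two inclusions. On the one hand, the universal congruence $\nabla$ and each relation $\rho^{J^S_k}_\pi$ (with $1\leqslant k\leqslant n$ and $\pi$ a congruence of a maximal subgroup $H_0$ of $J^S_k$) is a congruence of $S$; on the other hand, these are the only congruences. The first half needs only the remark that the machinery recalled above already yields that $\rho^{J}_\pi$ is a congruence once a map $\varepsilon\colon J\to H_0$ with the stated multiplicative property is available, so the point is to produce such an $\varepsilon$ for every regular $\J$-class $J=J^S_k$ of each of the six monoids. I would obtain it by coordinatising $J^S_k$ as a Rees matrix semigroup over its maximal subgroup $H_0$ ($\cong\C_k$ in the orientation-preserving cases, $\cong\D_{2k}$ in the oriented cases except for $k\in\{1,2\}$): fixing transversals of the $\R$- and $\L$-classes of $J^S_k$ in $S$, every $s\in J^S_k$ acquires coordinates $(i,w,j)$ with $w\in H_0$, and one puts $\widetilde{s}=w$. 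The required identities then reduce to bookkeeping with the sandwich matrix of $J^S_k$, whose entries lie in $H_0$ and can be read off from the factorisation of elements of $S$ as $ut$ with $u$ in the group of units and $t$ in the order-preserving submonoid $T\in\{\O_n,\POI_n,\PO_n\}$; with $\varepsilon$ at hand, the cited construction gives that each $\rho^{J^S_k}_\pi$ is a congruence.

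For the converse, let $\rho$ be a congruence of $S$ with $\rho\neq\nabla$, and set
\[
k=\max\{\,r\in\{1,\dots,n\}:\ I^S_{r-1}\times I^S_{r-1}\subseteq\rho\,\},
\]
which is well defined ($|I^S_0|\leqslant1$) and satisfies $k\leqslant n$ (otherwise $I^S_n\times I^S_n=S\times S\subseteq\rho$, i.e.\ $\rho=\nabla$); in particular $\rho_{I^S_{k-1}}\subseteq\rho$. The engine of the proof is the following \emph{reduction lemma}: if $(a,b)\in\rho$ with $a\neq b$ and $a$ not $\H$-related to $b$, then $I^S_m\times I^S_m\subseteq\rho$, where $m=\max\{\rank(a),\rank(b)\}$ (note that distinct ranks force $a$ not $\H$-related to $b$). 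One proves it in two steps: first, sandwiching $(a,b)$ by suitable idempotents of $S$ (at most two multiplications, using that $a\neq b$ forces $\ker(a)\neq\ker(b)$ or $\im(a)\neq\im(b)$, together with the descriptions of $\L$, $\R$ and $E(S)$ recalled in Section~1) produces a pair $(e,c)\in\rho$ with $e$ an idempotent of rank $m$ and $\rank(c)<m$; second, \emph{propagation}: from $e\,\rho\,c$, multiplying on both sides by order-preserving maps of $T$ and by the rotation $g$ and, when present, the reflection $h$, one reaches $f\,\rho\,d$ for every idempotent $f$ of rank $\leqslant m$ and every $d$ of rank $<m$, whence $I^S_m\times I^S_m\subseteq\rho$. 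The propagation step is, in essence, the known fact that the only congruences of the aperiodic monoids $\O_n$, $\POI_n$, $\PO_n$ are the Rees congruences, combined with the action of $\C_n$ and $\D_{2n}$.

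Granting the reduction lemma, the maximality of $k$ forces two things. First, $\rho$ relates no two distinct $\H$-unrelated elements of rank $\geqslant k$: such a pair would give $I^S_m\times I^S_m\subseteq\rho$ with $m\geqslant k$, hence $I^S_k\times I^S_k\subseteq\rho$, contradicting the choice of $k$ when $k<n$ and meaning $\rho=\nabla$ when $k=n$. Second, $\rho$ relates no two distinct $\H$-related elements of rank $r>k$: if $e\,\H\,w$ with $w\neq e$ of rank $r$ (one may first move the pair into a group $\H$-class, so that $e$ is idempotent), choose an idempotent $f$ of rank $r-1$ with $\im(f)\subseteq\im(e)$ and $(\im f)w\neq\im f$ (possible since $w\neq e$); then $fe=f$, so $f\,\rho\,fw$ is an $\H$-unrelated pair of rank $r-1\geqslant k$, a contradiction. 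Hence $\rho$ is trivial on $J^S_r$ for $r>k$, satisfies $\rho\cap(J^S_k\times J^S_k)\subseteq\H$, and equals $I^S_{k-1}\times I^S_{k-1}$ below $J^S_k$; that is, $\rho$ agrees with $\pi_{J^S_k}$ off $J^S_k$ and is contained in $\H$ on $J^S_k$. Transporting $\rho\cap(J^S_k\times J^S_k)$ through $\varepsilon$ yields the relation $\pi=\{(\widetilde{a},\widetilde{b}):a,b\in J^S_k,\ (a,b)\in\rho\}$ on $H_0$; using that $\rho$ is a congruence and the multiplicative property of $\varepsilon$, $\pi$ is a congruence of the group $H_0$ and $\rho=\rho^{J^S_k}_\pi$, which completes the proof.

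The step I expect to be the real obstacle is the propagation inside the reduction lemma. Unlike $\T_n$ or $\I_n$, the monoid $S$ is ``thin'': its group $\H$-classes are cyclic or dihedral, not full symmetric groups, so images and kernels cannot be permuted freely, only by the rotations and reflections in $\D_{2n}$ and by order-preserving maps. Verifying that this limited supply still suffices to spread a single identification $e\,\rho\,c$ over the whole ideal $I^S_m$ is the genuine combinatorial core, and it has to be carried out with the explicit generators and idempotents of $\O_n$, $\POI_n$ and $\PO_n$; the six monoids then differ only in routine bookkeeping (kernels versus domains, and the presence or absence of $h$). Everything else --- the Rees-matrix coordinatisation producing $\varepsilon$, and the passage from $\rho\cap(J^S_k\times J^S_k)$ to the group congruence $\pi$ --- is formal.
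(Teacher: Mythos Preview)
The paper does not contain a proof of this theorem: it is quoted verbatim as \cite[Theorem 3.3]{Fernandes&Gomes&Jesus:2009} and used as an input to the rest of the argument, so there is no ``paper's own proof'' to compare against. Your outline is a reasonable reconstruction of the strategy actually carried out in \cite{Fernandes&Gomes&Jesus:2009}: there the authors do coordinatise each regular $\J$-class $J^S_k$ via explicit transversals to obtain the map $\varepsilon$, and the converse direction does proceed by first isolating the largest Rees congruence below $\rho$ and then showing that above that level $\rho$ is contained in $\H$ and induces a group congruence on the maximal subgroup. Your honest flag on the ``propagation'' step is well placed: that is exactly where the work in \cite{Fernandes&Gomes&Jesus:2009} lies, and it is not a formality --- it requires careful case analysis with the specific idempotents and the action of $g$ (and $h$) to show that a single non-$\H$ identification in rank $m$ collapses all of $I^S_m$. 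As written, your proposal is a correct high-level plan but not yet a proof, since the reduction lemma is asserted rather than established; if you intend to give a self-contained argument you will need to fill that in, otherwise simply citing \cite{Fernandes&Gomes&Jesus:2009} (as the present paper does) is the appropriate move.
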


Notice that  $\rho^{J^S_1}_\pi$ is the identity congruence on $S$.

\section{On the endomorphisms}\label{endo} 

In this section we aim to present a description of the monoid of endomorphisms for each of the semigroups 
$\OP_n$, $\OR_n$, $\POPI_n$, $\PORI_n$, $\POP_n$ and $\POR_n$. 

\medskip 

The following result generalizes Lemma 2.1 of \cite{Fernandes&Santos:2019}. 

\begin{lemma}\label{lr}
Let $S$ be a regular semigroup. Let $I$ be a subset of $S$ and $\phi$ be an endomorphism of $S$ such that $I$ is a kernal class of $\phi$ and for all $s,t\in S\setminus I$, $s\phi=t\phi$ implies that $s\mathscr{H}t$. Let $s,t\in S\setminus I$. Then:
\begin{enumerate}
\item $s\mathscr{L}t$ if and only if $s\phi\mathscr{L}t\phi$; 
\item $s\mathscr{R}t$ if and only if $s\phi\mathscr{R}t\phi$. 
\end{enumerate}
\end{lemma}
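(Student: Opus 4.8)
The plan is to prove the two equivalences essentially in parallel, exploiting the regularity of $S$ and the fact that endomorphisms preserve Green's relations. Since an endomorphism always preserves $\mathscr{L}$, $\mathscr{R}$ and $\mathscr{H}$, the forward implications in both (1) and (2) are immediate: if $s\,\mathscr{L}\,t$ then $s\phi\,\mathscr{L}\,t\phi$, and similarly for $\mathscr{R}$. So the whole content lies in the two converse implications, and I would phrase the argument so that a single idea handles both.

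For the converse of (1), suppose $s\phi\,\mathscr{L}\,t\phi$ with $s,t\in S\setminus I$. Using regularity of $S$, pick an inverse $s'$ of $s$, so that $e=ss'$ and $f=s's$ are idempotents with $s\,\mathscr{R}\,e$, $s\,\mathscr{L}\,f$, and $s=ses'$-type identities hold; then $s\phi\,\mathscr{L}\,f\phi$ and $f\phi$ is idempotent. From $s\phi\,\mathscr{L}\,t\phi$ we get $t\phi\,\mathscr{L}\,f\phi$, i.e. $S^1(t\phi)=S^1(f\phi)$, which lets me write $f\phi$ and $t\phi$ as left multiples of each other. Pull these back along $\phi$: choosing preimages (or, more safely, multiplying $t$ on the left by suitable elements and comparing images) I aim to produce an element $t^*$ with $t^*\phi=t\phi$ and $t^*\,\mathscr{L}\,f$, hence $t^*\,\mathscr{L}\,s$. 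Now either $t^*\in I$ — but then $t\phi=t^*\phi$ lies in the image of the kernel class $I$, and since $t\notin I$ this would force a collision $t\phi=t^*\phi$ with $t\in S\setminus I$, $t^*\in I$, which is impossible because $I$ is a full kernel class of $\phi$ (elements outside $I$ cannot map to the common $\phi$-value of $I$); so $t^*\notin I$. Then $t^*,t\in S\setminus I$ with $t^*\phi=t\phi$, so by hypothesis $t^*\,\mathscr{H}\,t$, in particular $t^*\,\mathscr{L}\,t$, and combined with $t^*\,\mathscr{L}\,s$ this gives $s\,\mathscr{L}\,t$, as desired.

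The converse of (2) is the exact left–right dual: run the same construction starting from an inverse of $s$, but now use the $\mathscr{R}$-relation and left/right multiplication interchanged, producing $t^*$ with $t^*\phi=t\phi$ and $t^*\,\mathscr{R}\,s$; the same dichotomy (is $t^*$ in $I$ or not?) rules out $t^*\in I$, hypothesis gives $t^*\,\mathscr{H}\,t$ hence $t^*\,\mathscr{R}\,t$, and transitivity of $\mathscr{R}$ yields $s\,\mathscr{R}\,t$.

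I expect the main obstacle to be the middle step: manufacturing the element $t^*$ with $t^*\phi=t\phi$ and the desired Green relation to $s$, purely from the equation $s\phi\,\mathscr{L}\,t\phi$ (respectively $\mathscr{R}$) in the image, without assuming $\phi$ surjective. The trick is to work with idempotents coming from fixed inverses of $s$ in $S$ and to use that $e\phi$ is an idempotent $\mathscr{L}$- (resp. $\mathscr{R}$-) related to $s\phi$; then the left (resp. right) multipliers witnessing $t\phi\,\mathscr{L}\,s\phi$ in $S^1$ can be lifted trivially because one only needs their \emph{action}, i.e. one replaces $t$ by $t'=ps$ or $t'=tq$ for suitable $p,q\in S^1$ chosen so that $t'\phi=t\phi$, which is forced by the $\mathscr{L}$/$\mathscr{R}$ equalities after applying $\phi$. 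Care is needed with the $S^1$ versus $S$ distinction and with checking that the candidate $t^*$ is genuinely comparable to $s$ under the right Green relation; once that is pinned down, the rest is the clean case analysis above, and the crucial use of the hypothesis is exactly the step that excludes $t^*\in I$ and then upgrades $t^*\phi=t\phi$ to $t^*\,\mathscr{H}\,t$.
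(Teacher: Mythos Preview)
Your forward implications and overall strategy are fine and close in spirit to the paper's argument. The gap is exactly at the step you flag as the ``main obstacle'': you claim you can produce $t^*$ with $t^*\phi=t\phi$ and $t^*\,\mathscr{L}\,f$ (hence $t^*\,\mathscr{L}\,s$), but any construction of the form $t^*=ps$ only yields $t^*\leq_{\mathscr{L}} s$, not $t^*\,\mathscr{L}\,s$; nothing you wrote supplies the reverse inequality. Without the full relation, the concluding transitivity step fails. This is fixable---once you have $t^*\,\mathscr{H}\,t$ and $t^*\leq_{\mathscr{L}} s$ you get $t\leq_{\mathscr{L}} s$, and then a symmetric run with the roles of $s$ and $t$ swapped gives $s\leq_{\mathscr{L}} t$---but your proposal does not say this, and instead asserts the unjustified $t^*\,\mathscr{L}\,s$.

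The paper sidesteps this entirely and is shorter. It first notes that since $S$ is regular, $S\phi$ is regular, so $s\phi$ and $t\phi$ are already $\mathscr{L}$-related \emph{inside} $S\phi$: there exist $u,v\in S$ with $s\phi=(u\phi)(t\phi)=(ut)\phi$ and $t\phi=(v\phi)(s\phi)=(vs)\phi$. This single observation is what makes the ``lifting'' you worry about automatic---the witnesses live in $S\phi$ for free. Since $s,t\notin I$ and $I$ is a kernel class, $ut,vs\notin I$, and the hypothesis applied \emph{twice} gives $s\,\mathscr{H}\,ut$ and $t\,\mathscr{H}\,vs$. Then $s\in S^1(ut)\subseteq S^1 t$ and $t\in S^1(vs)\subseteq S^1 s$, so $s\,\mathscr{L}\,t$. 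No idempotents, no separate symmetry pass, and no attempt to force a full Green relation on the auxiliary element.
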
 
\begin{proof}
We prove the lemma for Green's relation $\mathscr{L}$. The proof for $\mathscr{R}$ is similar. 

Let $s,t\in S\setminus I$. 
If $s\mathscr{L}t$, then $s\phi\mathscr{L}t\phi$, since any homomorphism of semigroups preserves Green's relations. Conversely, suppose that $s\phi\mathscr{L}t\phi$. As $S$ is regular, then $S\phi$ is also regular, whence $s\phi$ and $t\phi$ are also $\L$-related in $S\phi$ and so, 
for some $u,v\in S$, we have $s\phi=(u\phi)(t\phi)$ and $t\phi=(v\phi)(s\phi)$. 
Hence $s\phi=(ut)\phi$ and so $ut\in S\setminus I$, 
since $s\in S\setminus I$ and $I$ is a kernel class of $\phi$. 
Thus $s\mathscr{H}ut$, by hyphothesis. 
Analogously, $t\mathscr{H}vs$ and so there exist $x,y\in S$ such that $s=xut$ and $t=yvs$. Therefore $s\mathscr{L}t$, as required. 
\end{proof}

Recall the following lemmas from \cite{Fernandes&al:2010} and \cite{Fernandes&Santos:2019}:

\begin{lemma}[{\cite[Lemma 2.4]{Fernandes&al:2010}}] \label{o}
Let $\lfloor \frac{n+2}{2} \rfloor\leqslant k\leqslant n-1$. Then there exist idempotents $e_1,\ldots, e_{n-1}\in J_k^{\O_n}$ such that $e_i e_j\in I_{k-1}^{\O_n}$ or $e_j e_i\in I_{k-1}^{\O_n}$ for all $1\leqslant i< j\leqslant n-1$.
\end{lemma}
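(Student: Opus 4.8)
The plan is to produce, explicitly, a list of $n-1$ pairwise distinct idempotents of $\O_n$ of rank exactly $k$ and to verify the required property directly from the ranks of their products. Throughout, for $1\le p\le q\le n$ write $[p,q]=\{p,p+1,\ldots,q\}$. I would first recall the shape of the idempotents of $\O_n$: an idempotent $e\in\O_n$ of rank $k$ is determined by an ordered partition $B_1<\cdots<B_k$ of $\Omega_n$ into intervals together with a choice of a point $b_i\in B_i$ for each $i$, acting by $xe=b_i$ for $x\in B_i$; thus the kernel classes of $e$ are the $B_i$ and $\im(e)=\{b_1,\ldots,b_k\}$. The other ingredient is the elementary rank criterion: since $\im(ef)=(\im e)f$, for $e,f\in J_k^{\O_n}$ we have $ef\in I_{k-1}^{\O_n}$ if and only if $f$ is not injective on $\im(e)$, equivalently, if and only if some kernel class of $f$ contains two distinct points of $\im(e)$.

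Put $d=n-k$; then $d\ge1$ and the hypothesis $\lfloor\frac{n+2}{2}\rfloor\le k$ yields $n\ge 2d+1$. I would take the family to consist of: for each $a\in\{1,\ldots,n-d\}$, the idempotent $f_a$ that collapses $[a,a+d]$ to its least point $a$ and fixes all other points; and, when $d\ge2$, for each $m\in\{1,\ldots,d-1\}$, the idempotent $g_m$ that collapses $[1,m+1]$ to $1$ and $[n-d+m,n]$ to $n-d+m$ and fixes everything in between. The bound $n\ge 2d+1$ ensures all of these are well defined; each is an order-preserving idempotent of rank $n-d=k$; and since each $f_a$ has exactly one multi-point kernel class while each $g_m$ has exactly two, and within each family the positions of these classes differ, the $(n-d)+(d-1)=n-1$ elements listed are pairwise distinct and lie in $J_k^{\O_n}$. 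Relabel them as $e_1,\ldots,e_{n-1}$ in any order (the condition in the statement is symmetric in $i$ and $j$).

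It then remains to check, for each pair, that one of its two products drops rank, using the criterion above; this is a finite interval-intersection verification split into three cases. For $f_a,f_b$ with $a<b$ one computes $|\im(f_a)\cap[b,b+d]|=\min(b-a,d+1)$, so $f_af_b\in I_{k-1}^{\O_n}$ unless $b=a+1$, and in that last case $\im(f_{a+1})\supseteq\{a,a+1\}\subseteq[a,a+d]$ gives $f_{a+1}f_a\in I_{k-1}^{\O_n}$. For $f_a,g_m$: the image of $f_a$ meets the kernel class $[1,m+1]$ of $g_m$ in at least two points when $a\ge2$, and meets the kernel class $[n-d+m,n]$ of $g_m$ in at least two points when $a=1$, so $f_ag_m\in I_{k-1}^{\O_n}$ in every case. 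For $g_m,g_{m'}$ with $m<m'$: the image of $g_m$ contains $\{1\}\cup[m+2,m'+1]$, which constitutes at least two points of the kernel class $[1,m'+1]$ of $g_{m'}$, so $g_mg_{m'}\in I_{k-1}^{\O_n}$. Each of these steps uses only $n\ge2d+1$.

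The main obstacle is not any single step but getting the rank criterion pointed the right way — it is injectivity of the \emph{second} factor on the image of the \emph{first} that governs $\rank(ef)$ — together with choosing the auxiliary idempotents $g_m$ correctly, in particular anchoring their second collapsed block at the right-hand end of $\Omega_n$, which is precisely what makes the three comparison cases go through simultaneously under the single inequality $n\ge 2d+1$; once those choices are fixed, the verification is routine bookkeeping with intervals.
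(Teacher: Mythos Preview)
Your argument is correct. The explicit family $\{f_a:1\le a\le n-d\}\cup\{g_m:1\le m\le d-1\}$ does consist of $n-1$ pairwise distinct order-preserving idempotents of rank $k$, and the three case checks go through exactly as you describe under the single hypothesis $n\ge 2d+1$; the one place worth a second look is Case~2 with $a=1$, where you need $n-d+m\ge d+2$ so that $\im(f_1)\cap[n-d+m,n]$ is the full interval $[n-d+m,n]$ of size $d-m+1\ge2$, and this is precisely $n\ge 2d+2-m$, implied by $n\ge 2d+1$ since $m\ge1$.

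As for comparison: the present paper does not prove this lemma at all but simply quotes it from \cite[Lemma~2.4]{Fernandes&al:2010}, so there is no in-paper argument to set yours against. Your construction is in the same spirit as the one in that reference --- one writes down rank-$k$ idempotents whose ``fat'' kernel blocks are positioned so that for any pair, one image must fall with two points into a block of the other --- and your choice to split the family into the one-block $f_a$'s and the two-block $g_m$'s is a clean way to organise the verification. The only stylistic remark is that your final paragraph about ``obstacles'' is commentary rather than proof and could be dropped in a polished write-up.
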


\begin{lemma}[{\cite[Lemma 2.3]{Fernandes&Santos:2019}}] \label{poi}
Let $1\leqslant k\leqslant n-1$. Then there exist idempotents $e_1,\ldots, e_{n}\in J_k^{\POI_n}$ such that $e_i e_j\in I_{k-1}^{\POI_n}$ for all $1\leqslant i< j\leqslant n$.
\end{lemma}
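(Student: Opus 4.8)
The plan is to construct, for each rank $k$ in the stated range, an explicit family of $n$ idempotents in $J_k^{\POI_n}$ whose pairwise products (in at least one order) drop rank. Since the idempotents of $\POI_n$ are precisely the partial identities $\id|_X$ with $X\subseteq\Omega_n$ and $|\im(\id|_X)|=|X|$, an idempotent lies in $J_k^{\POI_n}$ exactly when $|X|=k$, and the product $\id|_X\id|_Y=\id|_{X\cap Y}$ has rank $|X\cap Y|$. So the combinatorial task reduces entirely to: find $n$ subsets $X_1,\dots,X_n$ of $\Omega_n=\{1<2<\cdots<n\}$, each of size $k$, such that $|X_i\cap X_j|\leqslant k-1$ for all $i\neq j$ — that is, no two of the $X_i$ coincide. (Here the product actually commutes, so we get $e_ie_j\in I_{k-1}^{\POI_n}$ for \emph{all} pairs, matching the statement.)

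First I would record the reduction above, noting that in $\I_n$ (hence in $\POI_n$) idempotents are partial identities and their products are partial identities on the intersection of the domains. Then I would exhibit the subsets: the natural choice is $X_i=\{1,2,\dots,k\}$ with one element swapped out, or more cleanly, a ``sliding window'' type family. For instance, for $1\leqslant k\leqslant n-1$ one can take the $k$-subsets $\{1,\dots,k\}$, $\{1,\dots,k-1,k+1\}$, $\{1,\dots,k-1,k+2\}$, $\dots$ — but we need $n$ of them, so a cleaner construction is: for $i=1,\dots,n$, let $X_i$ be the $k$-subset obtained from the ``all but position'' idea. Concretely, since $k\leqslant n-1$ we have $n-k\geqslant 1$ possible positions to delete beyond a fixed core; the simplest uniform recipe is to let $X_i=\Omega_n\setminus\{$ some $(n-k)$-subset depending on $i$ $\}$, choosing $n$ distinct $(n-k)$-subsets of an $n$-element set, which exist because $\binom{n}{n-k}=\binom{n}{k}\geqslant n$ for $1\leqslant k\leqslant n-1$ (with $n\geqslant 3$). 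Then $e_i=\id|_{X_i}$ does the job; one could even just pick the $X_i$ to all contain a common $(k-1)$-subset but differ in their last element, using that there are $n-(k-1)\geqslant 2$ choices — though to reach $n$ idempotents the $\binom{n}{k}\geqslant n$ counting argument is the robust route.

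A slightly more explicit and self-contained version, avoiding the binomial inequality, is to cycle: for $i\in\{1,\dots,n\}$ set $X_i=\{i,i+1,\dots,i+k-1\}$ with indices read modulo $n$ (so these are the $n$ cyclic intervals of length $k$). When $1\leqslant k\leqslant n-1$ these $n$ sets are pairwise distinct, hence $|X_i\cap X_j|\leqslant k-1$, and $e_i=\id|_{X_i}\in E(\POI_n)$ with $e_ie_j=\id|_{X_i\cap X_j}\in I_{k-1}^{\POI_n}$ for all $i<j$. This handles the full range in one stroke. The main (and really only) obstacle is purely bookkeeping: verifying distinctness of the chosen $k$-subsets in the boundary cases $k=1$ and $k=n-1$, and confirming that $\id|_{X_i}\in\POI_n$ — but that is immediate since every partial identity is order-preserving. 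There is no genuine difficulty here; unlike Lemma~\ref{o}, where the order-preservation constraint forces the delicate hypothesis $\lfloor\frac{n+2}{2}\rfloor\leqslant k$, in $\POI_n$ all partial identities are available, so the construction works for every $k$ with $1\leqslant k\leqslant n-1$ and we even obtain the stronger conclusion that both products $e_ie_j$ and $e_je_i$ lie in $I_{k-1}^{\POI_n}$.
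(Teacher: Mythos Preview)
The paper does not supply its own proof of this lemma; it is merely recalled from \cite{Fernandes&Santos:2019}. Your argument is correct: since the idempotents of $\POI_n$ are precisely the partial identities $\id|_X$ and these satisfy $\id|_X\id|_Y=\id|_{X\cap Y}$, the task reduces to exhibiting $n$ pairwise distinct $k$-subsets of $\Omega_n$, and the $n$ cyclic intervals of length $k$ (which are pairwise distinct whenever $1\leqslant k\leqslant n-1$) work. Your side remarks---that the products commute so both orders land in $I_{k-1}^{\POI_n}$, and that no lower bound on $k$ is needed here in contrast with Lemma~\ref{o}---are also correct.

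One stylistic point: the proposal reads as a discussion of several candidate constructions before settling on the cyclic-interval one. For a final write-up you should commit to a single construction (the cyclic intervals, or simply the observation that $\binom{n}{k}\geqslant n$ for $1\leqslant k\leqslant n-1$ so $n$ distinct $k$-subsets exist) and present it directly.
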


Since $\O_{n}\subseteq\OP_{n}\subseteq\OR_n$ and $\POI_{n}\subseteq\POPI_{n}\subseteq\PORI_n,\POP_n\subseteq\POR_n$, it immediately follows: 
  
\begin{lemma} \label{all} 
Let $S\in\{\OP_n,\OR_n,\POPI_n,\PORI_n,\POP_n,\POR_n\}$ and let $\lfloor \frac{n+2}{2} \rfloor\leqslant k\leqslant n-1$. 
Then there exist idempotents $e_1,\ldots, e_{n-1}\in J_k^S$ such that $e_i e_j\in I_{k-1}^S$ or $e_j e_i\in I_{k-1}^S$ for all $1\leqslant i< j\leqslant n-1$.
\end{lemma}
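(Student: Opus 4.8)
The plan is to deduce Lemma \ref{all} directly from Lemmas \ref{o} and \ref{poi} by exploiting the chain of inclusions together with the fact that, in all the semigroups concerned, the $\mathscr{J}$-order is governed solely by the rank of a transformation. First I would fix $S\in\{\OP_n,\OR_n,\POPI_n,\PORI_n,\POP_n,\POR_n\}$ and $k$ with $\lfloor\frac{n+2}{2}\rfloor\leqslant k\leqslant n-1$, and split into two cases according to which of the two ``base'' semigroups sits inside $S$: namely $\O_n\subseteq\OP_n\subseteq\OR_n$ on the one hand, and $\POI_n\subseteq\POPI_n\subseteq\PORI_n$ and $\POI_n\subseteq\POP_n\subseteq\POR_n$ on the other. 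In the first case I would take the idempotents $e_1,\dots,e_{n-1}\in J_k^{\O_n}$ provided by Lemma \ref{o}; in the second case I would take the idempotents $e_1,\dots,e_n\in J_k^{\POI_n}$ provided by Lemma \ref{poi} and simply discard one of them (say $e_n$), keeping $e_1,\dots,e_{n-1}$.

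Next I would check that these same idempotents work inside the larger semigroup $S$. There are two things to verify. The first is that each $e_i$ still lies in $J_k^S$: since $e_i$ has rank $k$ and, as recalled in the Preliminaries, the relation $s\,\mathscr{J}\,t$ in any of these semigroups holds exactly when $|\im(s)|=|\im(t)|$, membership in the rank-$k$ $\mathscr{J}$-class is an intrinsic property depending only on $|\im(e_i)|=k$, not on which semigroup we compute in; hence $e_i\in J_k^S$. The second is that the products behave correctly: for $1\leqslant i<j\leqslant n-1$ we know $e_ie_j\in I_{k-1}^{T}$ or $e_je_i\in I_{k-1}^{T}$, where $T\in\{\O_n,\POI_n\}$ is the base semigroup. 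But $e_ie_j$ (respectively $e_je_i$) is computed by composition of partial transformations, so it is the same element of $\PT_n$ whether viewed in $T$ or in $S$, and belonging to $I_{k-1}^{T}$ just means its rank is at most $k-1$. Since $I_{k-1}^S=\{s\in S\mid |\im(s)|\leqslant k-1\}$ and $e_ie_j\in S$ (as $S$ is a semigroup containing the $e_i$), we conclude $e_ie_j\in I_{k-1}^S$ or $e_je_i\in I_{k-1}^S$, as required.

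The only mild subtlety — and the one point that really needs stating — is that the product $e_ie_j$ genuinely lies in $S$, which is immediate because $e_1,\dots,e_{n-1}\in S$ and $S$ is closed under composition, and that the rank of a product is unchanged between $T$ and $S$ because composition of partial maps on $\Omega_n$ does not depend on any ambient semigroup. So I expect no real obstacle here: the lemma is a purely bookkeeping consequence of ``rank is intrinsic'' plus the inclusions $\O_n\subseteq\OP_n\subseteq\OR_n$ and $\POI_n\subseteq\POPI_n\subseteq\PORI_n,\POP_n\subseteq\POR_n$ already recorded before the statement. The proof can therefore be written in a few lines, treating the two cases in parallel and invoking Lemma \ref{o} in one and Lemma \ref{poi} (after deleting a superfluous idempotent) in the other.
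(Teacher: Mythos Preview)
Your proposal is correct and follows essentially the same route as the paper, which simply records the inclusions $\O_n\subseteq\OP_n\subseteq\OR_n$ and $\POI_n\subseteq\POPI_n\subseteq\PORI_n,\POP_n\subseteq\POR_n$ and declares the lemma an immediate consequence of Lemmas~\ref{o} and~\ref{poi}. Your write-up merely makes explicit the bookkeeping (rank is intrinsic, discard one idempotent from Lemma~\ref{poi}) that the paper leaves to the reader.
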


\medskip 

In what follows, for $S\in\{\OP_n,\POPI_n,\POP_n,\OR_n,\PORI_n,\POR_n\}$, 
$k\in\{1,\ldots,n\}$ and a congruence $\pi$ on a maximal subgroup $H_k$ contained in $J^S_k$, 
we denote the congruence 
$\rho^{J^S_k}_\pi$ of  $S$ simply by $\rho^k_\pi$, provided there is no danger of ambiguity. 

The proof of the following lemma uses a reasoning analogous to what can be found in \cite{Fernandes&al:2010,Fernandes&Santos:2019}, 
which we include, on one hand, for the sake of completeness and, on the other hand, because there exist some critical own specificities.   

\begin{lemma}\label{2leqn-1}
Let $S\in\{\OP_n,\POPI_n,\POP_n,\OR_n,\PORI_n,\POR_n\}$, $1\leqslant k\leqslant n-1$ and 
$\phi$ an endomorphism of $S$ such that $\ker(\phi)=\rho^k_\pi$, for some congruence $\pi$ on a fixed maximal subgroup $H_k$ contained in $J^S_k$. 
Then:
\begin{enumerate}
\item if $S\in\{\OP_n,\OR_n\}$ then $k=1$;
\item if $k\geqslant2$ then $S\in\{\POPI_n,\POP_n,\PORI_n,\POR_n\}$, $k=n-1$, 
$J_n^S\phi=J^S_n$, $J_{n-1}^{S}\phi\subseteq J_{1}^{S}$ and $I_{n-2}^{S}\phi=\{\emptyset\}$.
\end{enumerate}
\end{lemma}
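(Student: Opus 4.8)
The plan is to argue by contradiction using the idempotent configurations provided by Lemma~\ref{all}. Suppose $\phi$ is an endomorphism of $S$ with $\ker(\phi)=\rho^k_\pi$ for some $1\leqslant k\leqslant n-1$. The Rees-type structure of $\rho^k_\pi$ tells us exactly what the kernel classes are: outside $J^S_k\cup I^S_{k-1}$ the map $\phi$ is injective (distinct elements are separated), on $J^S_k$ two elements are identified iff they are $\mathscr{H}$-related and their images in $H_k$ are $\pi$-related, and $I^S_{k-1}$ is collapsed to a single class. In particular $I^S_{k-1}$ is a kernel class of $\phi$ and on $S\setminus I^S_{k-1}$ the condition ``$s\phi=t\phi$ implies $s\mathscr{H}t$'' holds. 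So Lemma~\ref{lr} applies with $I=I^S_{k-1}$: $\phi$ reflects both $\mathscr{L}$ and $\mathscr{R}$ on $S\setminus I^S_{k-1}$.

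First I would pin down where the top $\mathscr{J}$-class goes. Since $\phi$ preserves $\leqslant_{\mathscr{J}}$ and $S/\mathscr{J}$ is a chain, $\phi$ induces an order-preserving map on the chain of ideals; because $I^S_{k-1}$ is the bottom kernel class and everything above it is separated, the images of the $\mathscr{J}$-classes $J^S_k<_\mathscr{J}\cdots<_\mathscr{J}J^S_n$ must occupy a strictly increasing chain of $\mathscr{J}$-classes, forcing in particular $J^S_n\phi\subseteq J^S_n$ (the group of units maps into the group of units, as there is nothing strictly above it), and more generally $J^S_{n-j}\phi\subseteq J^S_{n-j}$ for the relevant range — so in fact $J^S_k\phi\subseteq J^S_m$ for some $m\geqslant k$, with strict increases down the chain. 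The key quantitative step is a counting argument on idempotents. Fix $k'$ in the range $\lfloor\frac{n+2}{2}\rfloor\leqslant k'\leqslant n-1$ and take the idempotents $e_1,\dots,e_{n-1}\in J^S_{k'}$ from Lemma~\ref{all} with $e_ie_j\in I^S_{k'-1}$ or $e_je_i\in I^S_{k'-1}$ for $i<j$. Apply $\phi$: the images $e_i\phi$ are idempotents lying in a single $\mathscr{J}$-class $J^S_{m'}$ (by the previous paragraph), and for $i\neq j$ one of $(e_i\phi)(e_j\phi)$, $(e_j\phi)(e_i\phi)$ lands strictly below, i.e. in $I^S_{m'-1}$. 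Since $\phi$ reflects $\mathscr{L}$ and $\mathscr{R}$ on $S\setminus I^S_{k-1}$, distinct $e_i\phi$ with $i\neq j$ cannot be $\mathscr{H}$-related (they are not $\mathscr{H}$-related as a product drops rank), hence cannot be $\mathscr{L}$- or $\mathscr{R}$-related either; so the $e_i\phi$ are pairwise $\mathscr{H}$-incomparable idempotents in $J^S_{m'}$ with all cross-products dropping. But in $S$ a $\mathscr{J}$-class of rank $m'$ contains at most a bounded number of such mutually ``annihilating'' idempotents: for the monotone/oriented semigroups this number is $n$ (for the $\mathcal{I}$-type semigroups, where idempotents are partial identities) or $n-1$ (for the $\mathcal{T}$-type semigroups), and one needs $m'\geqslant\lfloor\frac{n+2}{2}\rfloor$ for such a configuration of size $n-1$ to exist at all. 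Matching the $n-1$ incomparable idempotents $e_i\phi$ against what $J^S_{m'}$ can hold forces $m'$ to be large, and then tracking this back through the strict chain $k\leqslant k'<\cdots$ squeezes $k$ down to $1$, except when $S$ is of partial-transformation or partial-permutation type (where $S/\mathscr{J}$ has the extra bottom class $J^S_0$ and the empty map gives extra room), in which case the only surviving possibility is $k=n-1$.

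Having forced $k=n-1$ in the partial cases, I would then read off the remaining assertions. With $k=n-1$ and the strict increase along the two-element chain $J^S_{n-1}<_\mathscr{J}J^S_n$, and $J^S_n$ the group of units (which $\phi$ maps into a group, hence into $J^S_n$ since $\phi$ preserves $\leqslant_\mathscr{J}$ and $\phi$ restricted to the units is a group endomorphism with image of the same order — one checks $\phi$ cannot collapse the units into $I^S_{n-1}$ because $\ker\phi=\rho^{n-1}_\pi$ separates $J^S_n$), we get $J^S_n\phi\subseteq J^S_n$; and since every element of $S$ is a product of units and elements of rank $\leqslant n-1$, and $\phi$ is surjective onto a sub... — more directly, $\phi$ is onto $S\phi$ which is regular and contains $J^S_n\phi$; a counting of $|S/\ker\phi|$ versus $|S|$ shows $J^S_n\phi=J^S_n$. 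For $J^S_{n-1}$: its $\phi$-image lies strictly below $J^S_n$ hence in $I^S_{n-1}$; it cannot meet $J^S_{n-1}$ itself, for then two distinct $\mathscr{H}$-incomparable idempotents of $J^S_{n-1}$ (which exist, by Lemma~\ref{all} with $k'=n-1$, namely $e_1\phi,\dots,e_{n-1}\phi$ again, using that on $S\setminus I^S_{n-2}$ the map reflects $\mathscr{L},\mathscr{R}$) would give more incomparable idempotents in $J^S_{n-1}$ than it contains, or would contradict the fact that a rank-$(n-1)$ idempotent times a rank-$(n-1)$ idempotent landing in $J^S_{n-1}$ forces them $\mathscr{H}$-related — so $J^S_{n-1}\phi\subseteq I^S_{n-2}$, and then comparing with $\ker\phi$ one more time shows the image sits in $J^S_1$ and that $I^S_{n-2}\phi=\{\emptyset\}$. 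The main obstacle I anticipate is the bookkeeping in the counting step: getting a clean, uniform bound on the maximal size of a family of mutually $\mathscr{H}$-incomparable ``annihilating'' idempotents in each $J^S_{m'}$ across all six families $S$, so that the inequality $m'\geqslant\lfloor\frac{n+2}{2}\rfloor$ pops out and the induction down the $\mathscr{J}$-chain closes — the $\mathcal{T}$-type versus $\mathcal{I}$-type distinction ($n-1$ vs $n$ idempotents, via Lemma~\ref{o} vs Lemma~\ref{poi}) is exactly what splits case (i) from case (ii), so that split must be handled with care.
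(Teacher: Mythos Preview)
Your proposal has a fundamental error in the second paragraph that derails the whole argument. You claim that the images of $J^S_k,\ldots,J^S_n$ under $\phi$ occupy a strictly increasing chain of $\mathscr{J}$-classes, and conclude that $J^S_{n-j}\phi\subseteq J^S_{n-j}$ for each $j$, hence $J^S_k\phi\subseteq J^S_m$ with $m\geqslant k$. This is backwards. There is no reason for the images to climb: the paper in fact proves that if $J_k\phi\subseteq J_m$ then $m<k$ (when $k\geqslant2$). Your justification ``the group of units maps into the group of units, as there is nothing strictly above it'' is also wrong as stated: a priori $J_n\phi$ could land in any maximal subgroup of $S$, and the paper only concludes $J_n\phi=J_n$ at the very end, after everything else is in place, by comparing the orders of the cyclic/dihedral groups involved.

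What the paper actually does, and what your sketch is missing, is a three–step descent argument working entirely at level $k$. First, one shows directly (by three short contradictions using the idempotent $f$ with $I_{k-1}\phi=\{f\}$) that $\rank(f)<m<k$. Second, Lemma~\ref{lr} gives that $J_k$ and $J_k\phi$ have the same number of $\mathscr{L}$-classes, so $\binom{n}{k}\leqslant\binom{n}{m}$; since $m<k$ this forces $k\geqslant\lfloor\frac{n+2}{2}\rfloor$. Only \emph{then} can Lemma~\ref{all} be applied at level $k$ itself (not at an auxiliary $k'$). Third, the key quantitative step is an image–intersection count: from $e_ie_j\in I_{k-1}$ one gets $\im(e_i\phi)\cap\im(e_j\phi)=\im(f)$, so the sets $E_i=\im(e_i\phi)\setminus\im(f)$ are pairwise disjoint and nonempty, and $|\im(f)|+(n-1)|E_1|\leqslant n$ forces $|E_i|=1$ and $|\im(f)|\leqslant1$. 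A further $\mathscr{L}$-class count eliminates $|\im(f)|=1$, leaving $f=\emptyset$ and $m=1$; then $\binom{n}{k}\leqslant n$ yields $k=n-1$. Your ``mutually annihilating idempotents'' idea is gesturing at this, but you never set up the inequality $m<k$ that triggers the whole chain, and the bound you hope for on such families in $J_{m'}$ is neither stated nor proved. The ``induction down the $\mathscr{J}$-chain'' you propose is not needed and would not close without these ingredients.
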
 
\begin{proof} 
Observe that it suffices to show the second property. So, suppose that $k\geqslant2$. 

\smallskip 

During this proof we omit the superscript $^S$ in the notation of $\mathscr{J}$-classes and ideals.

\smallskip 

Let $f$ be the idempotent of $S$ such that $I_{k-1}\phi=\{f\}$. Then, we also have $f\phi^{-1}=I_{k-1}$. 

Since a homomorphism of semigroups preserves Green's relations, there exists $m\in\{1,2,\ldots,n\}$ such that $J_{k}\phi\subseteq J_{m}$.  
Moreover, since a homomorphism of semigroups preserves the quasi-order $\leqslant_ \J$,  
we have $\rank(f)\leqslant m$ and 
$$ 
(\bigcup_{i=k}^{n}J_i)\phi\subseteq \bigcup_{i=m}^{n}J_i 
$$
and so 
$$\displaystyle 
(\bigcup_{i=k}^{n}E(J_i))\phi = E(\bigcup_{i=k}^{n}J_i)\phi \subseteq E(\bigcup_{i=m}^{n}J_i) = \bigcup_{i=m}^{n}E(J_i)
$$
From the injectivity of $\phi$ in $\bigcup_{i=k}^{n}E(J_i)$, we also get $m\leqslant k$. 

\smallskip 

Next, we aim to show that $\rank(f)<m<k$. 

We first suppose that $\rank(f)=k$. Then $m=k$, i.e. $J_{k}\phi\subseteq J_{k}$.
Since $\phi$ is injective on $E(J_{k})$, we have $E(J_{k})\phi=E(J_{k})$.
Then, there exists $e\in E(J_{k})$ such that $e\phi=f$, which contradicts the equality $f\phi^{-1}=I_{k-1}$. 
Hence $\rank(f)\leqslant k-1$ and so $f\phi=f$.

Secondly, suppose that $m=k$. Then, as above, we get $E(J_{k})\phi=E(J_{k})$. 
Clearly, there exists $e\in E(J_{k})$ such that $\fix(f)\nsubseteq \fix(e)$. Then $fe\neq f$ and so, being $t\in e\phi^{-1}$, we have 
$f\neq fe=f\phi t\phi=(ft)\phi=f$, which is a contradiction. Hence $m<k$.

Finally, suppose that $\rank(f)=m$. Let $e\in E(J_{k})$. Then $\rank(f)=m=\rank(e\phi)$. 
Since $e\phi \cdot f= e\phi f\phi=(ef)\phi=f=(fe)\phi=f\phi e\phi=f\cdot e\phi$, we deduce that $\ker(f)=\ker(e\phi)$ and $\im(f)=\im(e\phi)$. 
Then $f\mathscr{H} e\phi$ and so, as $f$ and $e\phi$ are idempotents, we get $f=e\phi$, which contradicts 
$f\phi^{-1}=I_{k-1}$. Hence, we also have $\rank (f)<m$.

\smallskip 

Now, by Lemma \ref{lr}, the number of $\mathscr{L}$-classes in $J_{k}\phi$ is equal to the number of $\mathscr{L}$-classes in $J_{k}$, 
which is $\binom{n}{k}$.
Since the number of $\mathscr{L}$-classes in $J_{m}$ is $\binom{n}{m}$ and $J_{k}\phi\subseteq J_{m}$, 
it follows that $\binom{n}{k}\leqslant \binom{n}{m}$ and so, as $m<k$, we must have $k -1\geqslant \lfloor \frac{n}{2} \rfloor$, i.e. 
$k\geqslant \lfloor \frac{n+2}{2} \rfloor$. 
Then, by Lemma \ref{all}, there exist idempotents $e_1,\ldots, e_{n-1}\in J_{k}$ such that 
$e_i e_j\in I_{k -1}$ or $e_j e_i\in I_{k -1}$ for all $1\leqslant i< j\leqslant n-1$. 
Since $f\cdot s\phi=f\phi\cdot s\phi=(fs)\phi=f$ for all $s\in S$, 
we have $\im(f)\subseteq \im(s\phi)$. Hence $\im(f)\subseteq \im(e_{i}\phi)$ for all $1\leqslant i\leqslant n-1$. 
For all $1\leqslant i< j\leqslant n-1$, we have $e_{i}\phi\cdot e_{j}\phi=(e_{i}e_{j})\phi=f$ or $e_{j}\phi\cdot e_{i}\phi=(e_{j}e_{i})\phi=f$, 
whence $\im(e_{i}\phi)\cap \im(e_{j}\phi)\subseteq \im(f)$, as $e_{i}\phi$ and $e_{j}\phi$ are idempotents, and so $\im(e_{i}\phi)\cap \im(e_{j}\phi)=\im(f)$.  

Let $E_i=\im(e_{i}\phi)\setminus \im(f)$ for $1\leqslant i\leqslant n-1$. Since $\rank(e_{i}\phi)=m>\rank(f)$ for $1\leqslant i\leqslant n-1$, the sets $E_1,\ldots,E_{n-1}$ are nonempty and $E_i\cap E_j=\emptyset$ for $1\leqslant i< j\leqslant n-1$.  
Moreover, we get $\im(f)\cap \bigcup_{i=1}^{n-1}E_{i}=\emptyset$ 
and $|\im(f)|+\sum_{i=1}^{n-1}|E_{i}|=|\im(f)|+|\bigcup_{i=1}^{n-1}E_{i}|=|\im(f)\cup \bigcup_{i=1}^{n-1}E_{i}|\leqslant n$. 
Since $|E_{1}|=|E_{2}|=\cdots=|E_{n-1}|$, we must have $|E_{1}|=|E_{2}|=\cdots=|E_{n-1}|=1$ and $|\im(f)|\leqslant 1$. 

Suppose that $|\im(f)|=1$. Then $m=2$, i.e. $J_{k}\phi\subseteq J_{2}$, and since $f\cdot s\phi=f\phi\cdot s\phi=(fs)\phi=f$, 
whence  $\im(f)\subsetneq \im(s\phi)$, 
for all $s\in J_{k}$, it follows that $J_{k}\phi$ has at most $\binom{n-1}{1}=n-1$ distinct $\mathscr{L}$-classes. 
Hence, by Lemma \ref{lr}, $J_{k}$ also has at most $n-1$ distinct $\mathscr{L}$-classes, and so $\binom{n}{k}\leqslant n-1$, 
which contradicts $2\leqslant k\leqslant n-1$.

Therefore, we conclude that $S\in\{\POPI_n,\POP_n,\PORI_n,\POR_n\}$, $f=\emptyset$ and $m=1$, i.e. $J_{k}\phi\subseteq J_{1}$. 
Since $J_1$ has exactly $n$ distinct $\mathscr{L}$-classes, by applying again Lemma \ref{lr}, 
we deduce that  $J_{k}$ has at most $n$ distinct  $\mathscr{L}$-classes, 
whence $\binom{n}{k}\leqslant n$, and so $k=n-1$,  
since $2\leqslant k \leqslant n-1$. Thus $J_{n-1}\phi\subseteq J_{1}$ and $I_{n-2}\phi=\{\emptyset\}$.

\smallskip 

It remains to show that $J_n\phi=J_n$. Since $\phi$ is injective in $J_n$, which is the group of units of $S$, we have $J_n\phi\simeq J_n$.  
Now, recall that the maximal subgroups of $J_k$ are: cyclic of order $k$, for $S\in\{\POPI_n,\POP_n\}$ and $1\leqslant k\leqslant n$; 
cyclic of order $k$, for $S\in\{\PORI_n,\POR_n\}$ and $k=1,2$; and dihedral of order $2k$, for $S\in\{\PORI_n,\POR_n\}$ and $3\leqslant k\leqslant n$. 
Therefore, we must have $J_n\phi\subseteq J_n$ and so $J_n\phi=J_n$, as required. 
\end{proof}

\smallskip 

Next, notice that 
$$
g^k=\begin{pmatrix} 
1&2&\cdots&n-k&n-k+1&\cdots&n\\
1+k&2+k&\cdots&n&1&\cdots&k
\end{pmatrix}, 
\quad\text{i.e.}\quad  
ig^k=\left\{\begin{array}{ll}
i+k & \mbox{if $1\leqslant i\leqslant n-k$}\\
i+k-n & \mbox{if $n-k+1\leqslant i\leqslant n$,} 
\end{array}\right.
$$
and 
$$
hg^k=\begin{pmatrix} 
1&\cdots&k&k+1&\cdots&n\\
k&\cdots&1&n&\cdots&k+1
\end{pmatrix}, 
\quad\text{i.e.}\quad  
ihg^k=\left\{\begin{array}{ll}
k-i+1 & \mbox{if $1\leqslant i\leqslant k$}\\
n+k-i+1 & \mbox{if $k+1\leqslant i\leqslant n$,}  
\end{array}\right.
$$
for $0\leqslant k\leqslant n-1$. 
Observe that $g^ih=hg^{n-i}$ and $hg^i=g^{n-i}h$ for $0\leqslant i\leqslant n-1$.  
Recall that $\D_{2n}=\langle g,h\rangle$ admits as proper normal subgroups:
\begin{enumerate}
  \item $\langle g^2,h\rangle$, 
  $\langle g^2,hg\rangle$ and $\langle g^p\rangle$, with $p$ a divisor of $n$, when $n$ is even;
  \item $\langle g^p\rangle$, with $p$ a divisor of $n$, when $n$ is odd.
\end{enumerate}
See \cite{Dummit&Foote} for more details.
Recall also  the elementary fact that all subgroups of $\C_n$ are normal and cyclic. 

\smallskip 

Now, for endomorphisms with kernel congruence associated to $J_n^S$, we have: 

\begin{lemma}\label{rhon}
Let $S\in\{\OP_n,\POPI_n,\POP_n,\OR_n,\PORI_n,\POR_n\}$ and 
$\phi$ an endomorphism of $S$ such that $\ker(\phi)=\rho^n_\pi$, for some congruence $\pi$ on the group of units $J_n^S$ of $S$. 
Then $\phi$ is an endomorphism satisfying one of the following properties: 
\begin{enumerate}
\item there exist idempotents $e,f\in S$ with $e\not=f$ and $ef=fe=f$ such
that $J_n\phi=\{e\}$ and $I_{n-1}^S\phi=\{f\}$;

\item  if $S\in\{\POPI_n,\POP_n\}$, 
there exists a group-element $g_0$ in $S$ of order $p$, with $p$ a divisor of $n$ and $p>1$, 
such that $g\phi=g_0$ and $I_{n-1}^S\phi=\{\emptyset\}$;

\item if $S\in\{\PORI_n,\POR_n\}$, 
there exist group-elements $h_0$ of order $2$ and $g_0$ of order $p$, with $p$ a divisor of $n$ and $p>1$, in some maximal subgroup of $S$ such that 
$\langle g_0,h_0\rangle$ is a dihedral group of order $2p$, $g\phi=g_0$, $h\phi=h_0$ and $I_{n-1}^S\phi=\{\emptyset\}$; 

\item  if $S\in\{\OR_n,\PORI_n,\POR_n\}$, 
there exist a group-element $h_0$ in $S$ of order $2$ and an idempotent $f$ of $S$ with rank less than or equal to $2$ such that 
$h_0f=fh_0=f$, $I_{n-1}^S\phi=\{f\}$ and:
\begin{enumerate}
\item $g\phi=h_0^2$ and $h\phi=h_0$; or 
\item $g\phi=h_0$ and $h\phi=h_0^2$, with $n$ even; or 
\item $g\phi=h\phi=h_0$, with $n$ even. 
\end{enumerate}
\end{enumerate}
\end{lemma}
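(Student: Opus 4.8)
The plan is first to unpack the hypothesis $\ker(\phi)=\rho^n_\pi$. Since $J_n^S$ is the maximum $\mathscr{J}$-class, $B(J_n^S)=I_{n-1}^S=S\setminus J_n^S$, and since $J_n^S$ (the group of units) is a single $\mathscr{H}$-class, the definitions of $\pi_{J_n^S}$ and $\rho^n_\pi$ show that $\rho^n_\pi$ collapses all of $I_{n-1}^S$ to a single class and restricts on $J_n^S$ to the group congruence $\pi$. Put $G=J_n^S$, $e=1\phi$, and let $f$ be the common image $w\phi$ of the elements $w\in I_{n-1}^S$. Then $f$ is idempotent (take $w$ idempotent --- $\emptyset$ in the partial cases, a rank $n-1$ idempotent otherwise) and $I_{n-1}^S\phi=\{f\}$; $e$ is an idempotent and is the identity of $S\phi$; and $\phi|_G$ is a group homomorphism with kernel the normal subgroup $N=\{u\in G:u\phi=e\}$, so $G\phi\cong G/N$ and $G\phi\subseteq H_e$, the maximal subgroup of $S$ at $e$. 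As $I_{n-1}^S$ is an ideal, $uw,wu\in I_{n-1}^S$ for $u\in G$, $w\in I_{n-1}^S$, so $uf=fu=f$ for all $u\in G\phi$ (in particular $ef=fe=f$); and $e\neq f$, since $e=f$ would force $(1,w)\in\rho^n_\pi\subseteq\pi_{J_n^S}$ with $1\in J_n^S$ and $w\in B(J_n^S)$. Finally, $ef=f$ gives $\im(f)\subseteq\im(e)$ and $fu=f$ gives $\im(f)\subseteq\fix(u)$ for every $u\in G\phi$; hence $\im(f)$ lies in the common fixed-point set of $G\phi$, which acts faithfully on the chain $\im(e)$. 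This inclusion is the structural core of the argument.

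I would then split on $G\phi$, using the classifications of the normal subgroups of $\C_n$ (all cyclic) and of $\D_{2n}$ recalled above, together with elementary facts about $H_e$ acting faithfully on $\im(e)$ --- $H_e$ being cyclic of order $\rank(e)$ for $S\in\{\OP_n,\POPI_n,\POP_n\}$, and dihedral of order $2\rank(e)$ (cyclic if $\rank(e)\leqslant 2$) for $S\in\{\OR_n,\PORI_n,\POR_n\}$: calling an element a \emph{rotation} if it acts order-preservingly on $\im(e)$, every non-identity rotation is fixed-point-free, every non-identity element of order $2$ fixes at most two points of $\im(e)$, and distinct order-$2$ elements have disjoint fixed-point sets. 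If $G\phi=\{e\}$, then $J_n^S\phi=\{e\}$, which with $I_{n-1}^S\phi=\{f\}$, $e\neq f$, $ef=fe=f$ is alternative (1). If $G\phi\neq\{e\}$ and $S\in\{\OP_n,\POPI_n,\POP_n\}$, then $G\phi\cong\C_p$ with $p\mid n$, $p>1$, generated by $g_0=g\phi$ of order $p$; since $H_e$ is cyclic and contains $g_0$, $\rank(e)\geqslant 2$ and $g_0$ is a nontrivial rotation, so $\fix(g_0)=\emptyset$, forcing $\im(f)=\emptyset$, i.e.\ $f=\emptyset$. As $\OP_n$ has no empty transformation, this gives $S\in\{\POPI_n,\POP_n\}$ and alternative (2).

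For $S\in\{\OR_n,\PORI_n,\POR_n\}$ with $G\phi\neq\{e\}$, set $g_0=g\phi$, $h_0=h\phi$; then $h_0^2=e$ and $G\phi=\langle g_0,h_0\rangle$. If $\im(f)\neq\emptyset$ (automatic when $S=\OR_n$), every non-identity element of $G\phi$ fixes a point of $\im(f)\subseteq\im(e)$, hence is not a rotation and is a reflection of order $2$; since distinct order-$2$ elements have disjoint fixed-point sets, $G\phi$ has exactly one non-identity element $u$, so $G\phi\cong\C_2$ and $\rank(f)\leqslant|\fix(u)|\leqslant 2$. From $\{g_0,h_0\}\subseteq\{e,u\}$ with $g_0$ or $h_0$ equal to $u$ we get the three configurations $(g_0,h_0)=(e,u),(u,e),(u,u)$, which --- taking $u$ as the $h_0$ of the statement --- are exactly sub-cases (4)(a), (4)(b), (4)(c); in the last two $N$ properly contains $\langle g^2,h\rangle$, resp.\ $\langle g^2,hg\rangle$, which by the normal-subgroup list of $\D_{2n}$ forces $n$ even. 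If $\im(f)=\emptyset$, then $f=\emptyset$, $S\in\{\PORI_n,\POR_n\}$, $I_{n-1}^S\phi=\{\emptyset\}$, and either $G\phi\cong\C_2$ (again alternative (4), with $f=\emptyset$ and the same parity analysis) or, by the normal-subgroup list, $G\phi\cong\D_{2p}$ for some $p\mid n$, $p>1$, whence $g_0$ has order $p$, $h_0$ has order $2$, $\langle g_0,h_0\rangle$ is dihedral of order $2p$, and we are in alternative (3).

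The bookkeeping of the first paragraph and the fact that $\phi|_G$ is a group homomorphism with the prescribed kernel are routine. The main obstacle is the combined group-theoretic and geometric analysis of the last two paragraphs: controlling the fixed-point sets of elements of the maximal subgroups $H_e$ (which yields $\rank(f)\leqslant 2$ and rules out alternative (3) for the full-transformation monoids $\OP_n,\OR_n$), and then matching each induced homomorphism $G\to G\phi\subseteq H_e$ against the normal-subgroup classifications of $\C_n$ and $\D_{2n}$, tracking the parity of $n$ --- which is precisely what produces the ``$n$ even'' clauses in (4)(b) and (4)(c). Accordingly, I would organise the proof as: (i) establish $\im(f)\subseteq\bigcap_{u\in G\phi}\fix(u)$; (ii) prove a short lemma on fixed points of elements of $H_e$; (iii) carry out the four alternatives, separating within each the full-transformation monoids $\OP_n,\OR_n$ from the partial ones.
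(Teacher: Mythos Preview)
Your proposal is correct and follows essentially the same approach as the paper: both set up the group homomorphism $\phi|_{J_n^S}$, invoke the normal-subgroup classifications of $\C_n$ and $\D_{2n}$, and exploit the inclusion $\im(f)\subseteq\bigcap_{u\in J_n^S\phi}\fix(u)$ together with fixed-point properties of elements of the maximal subgroup $H_e$ to pin down $f$ and the images of $g,h$. The only difference is organisational---you branch on whether $\im(f)$ is empty (using that distinct reflections in $H_e$ have disjoint fixed-point sets to force $|G\phi|\leqslant 2$), while the paper branches directly on the normal subgroup $\ker(\phi|_{J_n^S})$ (using that a product of two orientation-reversing elements is orientation-preserving to produce a fixed-point-free element when $|G\phi|\geqslant 4$)---but the substance is the same.
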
 
\begin{proof} During this proof we omit the superscript $^S$ in the notation of $\mathscr{J}$-classes and ideals.

\smallskip 

Let $H=J_n\phi$ and let $f$ be the idempotent of $S$ such that $I_{n-1}\phi=\{f\}$. 
Then, we also have $f\phi^{-1}=I_{n-1}$, whence $f\in I_{n-1}$ and so $Hf=fH=\{f\}$.  

Denote by $\varphi$ the homomorphism of groups $\phi|_{J_n}:J_n\longrightarrow H$ 
and consider $\ker(\varphi)$ as a normal subgroup of $J_n$. 
Then $H$ is isomorphic to $J_n/_{\ker(\varphi)}$. 
Observe that the mapping $J_n/_{\ker(\varphi)}\longrightarrow H$ 
such that $s\!\ker(\varphi)\longmapsto s\phi$, for $s\in J_n$,  is a group isomorphism. 

\smallskip 

Let us suppose that $S\in\{\OP_n,\POPI_n,\POP_n\}$. Then $J_n=\C_n=\langle g\rangle$, $H=\langle g\phi\rangle$ and
$g\phi\cdot f=f\cdot g\phi=f$. In particular, $\fix(f)\subseteq\fix(g\phi)$. 

Obviously, we have $\ker(\varphi)=\langle g^p\rangle$, for some divisor $p$ of $n$. 
Hence, as $|\langle g^p\rangle|=\frac{n}{p}$, we get $|H|=|\C_n/_{\langle g^p\rangle}|=p$ and that $g\phi$ is a group-element in $S$ of order $p$. 

If $p=1$ then $g\phi$ is an idempotent of $S$ and so $\phi$ is an endomorphism of $S$ satisfying 1. 

If $p>1$ then $\fix(g\phi)=\emptyset$ and so we must have $S\in\{\POPI_n,\POP_n\}$ and $f=\emptyset$. 
Thus, in this case, $\phi$ is an endomorphism of $S$ satisfying 2. 

\smallskip 

Now, suppose that $S\in\{\OR_n,\PORI_n,\POR_n\}$. 
Then $J_n=\D_{2n}=\langle g,h\rangle$, $H=\langle g\phi,h\phi\rangle$ and
$sf=fs=f$, for all $s\in H$. In particular, $\fix(f)\subseteq\fix(s)$, for all $s\in H$. 
Notice that, if $s\in H$ is a non-idempotent element then $|\fix(s)|\leqslant 2$ 
and, in addition, if $s$ is an orientation-preserving transformation then $\fix(s)=\emptyset$. 

Next, we consider each of the possibilities for $\ker(\varphi)$. 

\smallskip 

First, we consider that $\ker(\varphi)=D_{2n}$. 

Clearly, in this case, $g\phi=h\phi$ is an idempotent of $S$ and so $\phi$ is an endomorphism of $S$ satisfying 1. 

\smallskip 

Secondly, admit that $\ker(\varphi)=\langle g^p\rangle$, with $p$ a divisor of $n$. 

Then, as $|\langle g^p\rangle|=\frac{n}{p}$, we have $|H|=|\D_{2n}/_{\langle g^p\rangle}|=2p$ and that $g\phi$ and $h\phi$ are group-elements of $S$  of orders $p$ and $2$, respectively.  

If $p=1$ then, clearly, $g\phi$ is an idempotent of $S$, $(h\phi)^2=g\phi$ and so $\phi$ is an endomorphism of $S$ satisfying 4(a). 

Let $p>1$. Then $H$ has at least $3$ non-idempotent elements which can not be all non-orientation-preserving transformations, 
since the product of two distinct orientation-reversing transformations of a maximal subgroup of $S$ is a non-idempotent orientation-preserving transformation. 
Hence, there exists a non-idempotent orientation-preserving transformation $s$ in $H$ and so $\fix(f)\subseteq\fix(s)=\emptyset$. 
Therefore $S\in\{\POPI_n,\POP_n\}$ and $f=\emptyset$. 
Furthermore, $H$ must a dihedral group of order $2p$ and so $\phi$ is an endomorphism of $S$ satisfying 3. 

\smallskip 

Next, let us suppose that $\ker(\varphi)=\langle g^2,h\rangle$, with $n$ even. 

Since $\langle g^2,h\rangle=\{g^{2k}, hg^{2k}\mid 0\leqslant k\leqslant \frac{n-2}{2}\}$, 
we have $|H|=|\D_{2n}/_{\langle g^2,h\rangle}|=2$ and that $g\phi$ and $h\phi$ are group-elements of $S$ of orders $2$ and $1$, respectively.  
It follows that  $\phi$ is an endomorphism of $S$ satisfying 4(b). 

\smallskip 

Finally, we consider the case $\ker(\varphi)=\langle g^2,hg\rangle$, with $n$ even. 

Since $\langle g^2,hg\rangle=\{g^{2k}, hg^{2k+1}\mid 0\leqslant k\leqslant \frac{n-2}{2}\}$, 
we have $|H|=|\D_{2n}/_{\langle g^2,h\rangle}|=2$ and that $g\phi$ and $h\phi$ are both group-elements of $S$ of order $2$.  
Thus  $\phi$ is an endomorphism of $S$ satisfying 4(c), as required. 
\end{proof}

Notice that, under the conditions of 4 of the preceding lemma, 
since $\fix(f)\subseteq\fix(h_0)$, if $f\neq\emptyset$ 
then $h_0$ is not an orientation-preserving transformation. 

\medskip 

Let $T$ be a subsemigroup of $\PT_n$ and consider $\inn(T)=\{\phi _{T}^{\sigma}\mid \sigma\in\Sym_n~\text{and}~\sigma^{-1}T\sigma\subseteq T\}$, 
where $\phi _{T}^{\sigma}$ denotes the transformation of $T$ defined by $t\phi _{T}^{\sigma}=\sigma^{-1}t\sigma$, for all $t\in T$, for each $\sigma\in \Sym_n$. 
Observe that, for $\sigma\in \Sym_n$, if $\sigma^{-1}T\sigma\subseteq T$ then $\sigma^{-1}T\sigma=T=\sigma T\sigma^{-1}$. Moreover, 
it is clear that $\inn(T)$ is a subgroup of the automorphism group $\auto(T)$ of $T$. We call \textit{inner automorphisms}  of $T$ (under $\Sym_n$) to the elements of $\inn(T)$. 

\smallskip 

Recall that $T$ is said to be $\Sym_n$-\textit{normal} if $\sigma^{-1}T\sigma\subseteq T$, for all $\sigma\in\Sym_n$. 
In 1975, Sullivan \cite[Theorem 2]{Sullivan:1975} proved that $\inn(T)=\auto(T)\simeq\Sym_n$, for any $\Sym_n$-normal subsemigroup $T$ of $\PT_n$ containing a constant mapping. 
Moreover, we obtain an isomorphism $\Phi:\Sym_n\longrightarrow\auto(T)$ by defining $\sigma\Phi=\phi _{T}^{\sigma}$, for all $\sigma\in \Sym_n$. 

\medskip 

Let $S\in\{\I_n,\PT_n\}$ and take $I_1=I_1^S$. Notice that $I_1=I_1^{\POI_n}=I_1^{\POPI_n}=I_1^{\PORI_n}$, if $S=\I_n$, and  
$I_1=I_1^{\PO_n}=I_1^{\POP_n}=I_1^{\POR_n}$, if $S=\PT_n$. Moreover, it is clear that $I_1$ is an $\Sym_n$-normal subsemigroup of $\PT_n$ containing a constant mapping. 
Therefore, by Sullivan's Theorem, we have 
\begin{equation}\label{I1}
\auto(I_1)=\inn(I_1)=\{\phi _{I_1}^{\sigma}\mid \sigma\in\Sym_n\}\simeq\Sym_n.
\end{equation}

Now, let $G$ be any subgroup of $\Sym_n$ and take ${\I^G_1}=I_1\cup G$. Then ${\I^G_1}$ is a submonoid of $\PT_n$ admiting $G$ as group of units and such that 
${\I^G_1}/\mathscr{J}=\{\{\emptyset\} <_\mathscr{J} J_1^S <_\mathscr{J} G\}$. 
Since ${\I^G_1}$ contains constant idempotent transformations with arbitrary images, by \cite[Lemma 2.2]{Araujo&etal:2011}, we have 
$$
\auto({\I^G_1})=\inn({\I^G_1}).
$$

Denote the \textit{normalizer} of $G$ in $\Sym_n$ by
$$
\nor_{\Sym_n}(G)=\{\sigma\in \Sym_n \mid\sigma^{-1}G\sigma=G\}.
$$
Then, we have the following result.

\begin{proposition}
Let $G$ be a subgroup of $\Sym_n$. Then 
$\nor_{\Sym_n}(G)\simeq \inn({\I^G_1})=\auto({\I^G_1})$.
\end{proposition}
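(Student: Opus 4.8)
The plan is to exhibit an explicit isomorphism between $\nor_{\Sym_n}(G)$ and $\inn(\I^G_1)$, using the map $\sigma\longmapsto\phi^\sigma_{\I^G_1}$ that already underlies Sullivan's Theorem. First I would observe that $\I^G_1 = I_1 \cup G$ is $\nor_{\Sym_n}(G)$-normal: for $\sigma\in\nor_{\Sym_n}(G)$ we have $\sigma^{-1}G\sigma=G$ and, since $I_1$ is $\Sym_n$-normal, $\sigma^{-1}I_1\sigma = I_1$, so $\sigma^{-1}\I^G_1\sigma = \I^G_1$; hence the conjugation map $\phi^\sigma_{\I^G_1}:t\longmapsto\sigma^{-1}t\sigma$ is a well-defined automorphism of $\I^G_1$, and by definition it lies in $\inn(\I^G_1)$. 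This gives a map $\Psi:\nor_{\Sym_n}(G)\longrightarrow\inn(\I^G_1)$, $\sigma\Psi=\phi^\sigma_{\I^G_1}$, which is a group homomorphism by the usual computation $\phi^{\sigma\tau}_{\I^G_1}=\phi^\tau_{\I^G_1}\phi^\sigma_{\I^G_1}$ (or its opposite, depending on the convention for composing with $\phi$ on the right).

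Next I would check that $\Psi$ is injective. If $\phi^\sigma_{\I^G_1}=\id$, then $\sigma^{-1}t\sigma=t$ for every $t\in\I^G_1$; restricting to the constant idempotents $\id|_{\{x\}}$ (these belong to $I_1\subseteq\I^G_1$), the relation $\sigma^{-1}\id|_{\{x\}}\sigma = \id|_{\{x\sigma\}}$ forces $x\sigma=x$ for all $x\in\Omega_n$, so $\sigma=\id$. Then I would show $\Psi$ is surjective onto $\inn(\I^G_1)$. By definition, an element of $\inn(\I^G_1)$ has the form $\phi^\sigma_{\I^G_1}$ for some $\sigma\in\Sym_n$ with $\sigma^{-1}\I^G_1\sigma\subseteq\I^G_1$; I need such a $\sigma$ to lie in $\nor_{\Sym_n}(G)$. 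Since $G$ is the group of units of $\I^G_1$ and $\I^G_1/\mathscr{J}=\{\{\emptyset\}<_\mathscr{J}J_1^S<_\mathscr{J}G\}$, conjugation by $\sigma$ preserves the group of units, i.e. $\sigma^{-1}G\sigma=G$ (using that $\sigma^{-1}\I^G_1\sigma=\I^G_1$, which follows from the finiteness remark in the paper that $\sigma^{-1}T\sigma\subseteq T$ implies equality), hence $\sigma\in\nor_{\Sym_n}(G)$. Therefore $\Psi$ is a bijective homomorphism, i.e. an isomorphism $\nor_{\Sym_n}(G)\simeq\inn(\I^G_1)$, and the equality $\inn(\I^G_1)=\auto(\I^G_1)$ is exactly the consequence of \cite[Lemma 2.2]{Araujo&etal:2011} already recorded just before the statement.

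The only genuinely delicate point — and hence the step I would flag as the main obstacle — is the surjectivity argument, specifically justifying that an arbitrary $\sigma$ implementing an inner automorphism of $\I^G_1$ must normalize $G$. This hinges on identifying $G$ intrinsically inside $\I^G_1$ (as its group of units, equivalently its top $\mathscr{J}$-class) so that any automorphism, in particular conjugation by $\sigma$, must map $G$ onto $G$; one must be careful that $\phi^\sigma_{\I^G_1}$ restricted to $G$ is genuinely $g\longmapsto\sigma^{-1}g\sigma$ and that its image $\sigma^{-1}G\sigma$ therefore equals $G$ as a set, which is what $\sigma\in\nor_{\Sym_n}(G)$ means. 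Everything else — well-definedness, the homomorphism identity, and injectivity via the constants — is routine and follows the template of Sullivan's Theorem quoted above.
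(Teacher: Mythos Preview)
Your proposal is correct and follows essentially the same route as the paper: define the conjugation map $\sigma\mapsto\phi^\sigma_{\I^G_1}$ from $\nor_{\Sym_n}(G)$ to $\inn(\I^G_1)$, check it is a homomorphism, obtain injectivity from the action on $I_1$, and deduce surjectivity by observing that any inner automorphism must preserve the top $\mathscr{J}$-class $G$. The only cosmetic difference is that for injectivity you argue directly with the singleton partial identities, whereas the paper phrases the same step as ``restricting to $I_1$ recovers Sullivan's isomorphism $\Phi$''; these are the same computation.
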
 
\begin{proof}
Let $\sigma\in\nor_{\Sym_n}(G)$. Then, since clearly $\sigma^{-1}I_1\sigma\subseteq I_1$ and, by definition, 
$\sigma^{-1}G\sigma\subseteq G$, we have $\sigma^{-1}{\I^G_1}\sigma\subseteq {\I^G_1}$. Hence, we may define a mapping 
$\varphi:\nor_{\Sym_n}(G)\longrightarrow \inn(\I_1^G)$ by $\sigma\varphi =\phi_{\I^G_1}^{\sigma}$ for all $\sigma \in \nor_{\Sym_n}(G)$. 
It is easy to show that $\varphi$ is a homomorphism. 
Moreover, by restricting $\sigma\varphi$ to $I_1$ for each $\sigma\in\nor_{\Sym_n}(G)$, 
we get the restriction to $\nor_{\Sym_n}(G)$ of the above defined isomorphism 
 $\Phi:\Sym_n\longrightarrow\auto(I_1)$, from which follows that $\varphi$ is injective. 
On the other hand, let $\phi\in\inn(\I_1^G)$. 
Then $\phi=\phi_{\I^G_1}^{\sigma}$, for some $\sigma\in\Sym_n$ such that $\sigma^{-1}{\I^G_1}\sigma\subseteq {\I^G_1}$. 
As $\phi$ is an automorphism of ${\I^G_1}$, it must preserve the quasi-order $\leqslant_ \mathscr{J}$, 
whence $G\phi\subseteq G$ ($J_1^S\phi\subseteq J_1^S$ and $\emptyset\phi=\emptyset$) 
and so $\sigma^{-1}G\sigma\subseteq G$. Thus $\sigma\in \nor_{\Sym_n}(G)$, as required.
\end{proof}

\medskip 

Next, we aim to construct an endomorphism of $\PT_n$. 

Observe that $J_{n-1}^{\PT_n}=J_{n-1}^{\I_n}\cup J_{n-1}^{\T_n}$ (a disjoint union). 

Let us define a mapping $\phi_{\scriptscriptstyle 0}:\PT_n\longrightarrow\PT_n$ by: 
\begin{enumerate}
\item $\phi_{\scriptscriptstyle 0}|_{J_n^{\PT_n}}=\id|_{J_n^{\PT_n}}$; 

\item For $s\in J_{n-1}^{\I_n}$ such that $\dom(s)=\Omega_n\setminus\{i\}$ and 
$\im(s)=\Omega_n\setminus\{j\}$, for some $1\leqslant i,j\leqslant n$, 
let $s\phi_{\scriptscriptstyle 0}=\binom{i}{j}$; 

\item For $s\in J_{n-1}^{\T_n}$ such that $\im(s)=\Omega_n\setminus\{k\}$ and $\{i,j\}$ is the only non-singleton kernel class of $s$ (i.e. $i\neq j$ and $is=js$), 
for some $1\leqslant i,j,k\leqslant n$, 
let 
$s\phi_{\scriptscriptstyle 0}=\left(\begin{array}{cc}
i&j\\
k&k
\end{array}\right)$; 

\item $I_{n-2}^{\PT_n}\phi_{\scriptscriptstyle 0}=\{\emptyset\}$.
\end{enumerate}

Clearly, $\phi_{\scriptscriptstyle 0}$ is a well defined mapping. Moreover, it is a routine matter to prove the following lemma. 

\begin{lemma} \label{phi1} 
Let $S$ be a subsemigroup of $\PT_n$ such that $S\phi_{\scriptscriptstyle 0}\subseteq S$. 
Then, the mapping $\phi_{\scriptscriptstyle 0}$ is an endomorphism of $\PT_n$ and 
the restriction of $\phi_{\scriptscriptstyle 0}$ to $S$ may also be seen as an endomorphism of $S$.  
In particular, for any $S\in\{\POI_n,\PO_n,\POPI_n,\POP_n,\PORI_n,\POR_n\}$, the mapping $\phi_{\scriptscriptstyle 0}|_S:S\longrightarrow S$ is an endomorphism of $S$. 
\end{lemma}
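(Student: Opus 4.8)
The plan is to verify directly that $\phi_{\scriptscriptstyle 0}$ respects composition, i.e. that $(st)\phi_{\scriptscriptstyle 0} = (s\phi_{\scriptscriptstyle 0})(t\phi_{\scriptscriptstyle 0})$ for all $s,t\in\PT_n$; once this is done, the statements about $S$ and about the six semigroups $\POI_n,\PO_n,\POPI_n,\POP_n,\PORI_n,\POR_n$ follow immediately, since a restriction of a homomorphism to a subsemigroup closed under the map is again a homomorphism. The natural device is to observe that $\phi_{\scriptscriptstyle 0}$ is ``rank-lowering by one on $J_{n-1}$ and annihilating below'': writing $\rho$ for the Rees congruence of the ideal $I_{n-2}^{\PT_n}$, one sees that $\phi_{\scriptscriptstyle 0}$ factors through $\PT_n/\rho$, so it suffices to check multiplicativity only in the cases where neither factor, and the product, lie in $I_{n-2}^{\PT_n}$ — i.e. each of $s,t$ has rank $n$ or $n-1$ and $|\im(st)|\geqslant n-1$. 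The case where both $s$ and $t$ lie in $J_n^{\PT_n}=\Sym_n$ is trivial since $\phi_{\scriptscriptstyle 0}$ is the identity there.

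First I would dispose of the mixed cases where exactly one of $s,t$ is a permutation. If $s\in\Sym_n$ and $t\in J_{n-1}^{\PT_n}$ with $\mathrm{rank}(st)=n-1$, then $st$ and $t$ have the same image, and the non-singleton kernel class (resp. the missing domain point) of $st$ is obtained from that of $t$ by applying $s^{-1}$; a short check against the three defining clauses shows $(st)\phi_{\scriptscriptstyle 0}=s^{-1}\cdot(t\phi_{\scriptscriptstyle 0})=(s\phi_{\scriptscriptstyle 0})(t\phi_{\scriptscriptstyle 0})$, using that $\binom{i}{j}$ and $\left(\begin{smallmatrix}i&j\\k&k\end{smallmatrix}\right)$ transform correctly under left-multiplication by $s^{-1}=s\phi_{\scriptscriptstyle 0}$. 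The symmetric case $s\in J_{n-1}^{\PT_n}$, $t\in\Sym_n$ is handled the same way on the right. The remaining, and main, case is $s,t\in J_{n-1}^{\PT_n}$ with $|\im(st)|=n-1$ (when $|\im(st)|\leqslant n-2$ both sides equal $\emptyset$, since $s\phi_{\scriptscriptstyle 0}$ and $t\phi_{\scriptscriptstyle 0}$ then have disjoint-enough rank-$1$ supports forcing the product to be $\emptyset$; this itself needs a short argument comparing the image of $s\phi_{\scriptscriptstyle 0}$ with the domain of $t\phi_{\scriptscriptstyle 0}$).

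The hard part will be this last case, which splits further according to whether $s\in J_{n-1}^{\I_n}$ or $s\in J_{n-1}^{\T_n}$, and likewise for $t$, giving four subcases, each requiring one to read off the missing domain/image points and the kernel class of $st$ and match them against $(s\phi_{\scriptscriptstyle 0})(t\phi_{\scriptscriptstyle 0})$, a product of two rank-$1$ maps. For instance, when $s,t\in J_{n-1}^{\T_n}$ with $\im(s)=\Omega_n\setminus\{k\}$, kernel class $\{i,j\}$ of $s$, $\im(t)=\Omega_n\setminus\{l\}$, kernel class $\{p,q\}$ of $t$: the product $st$ has rank $n-1$ precisely when $\{p,q\}$ is not ``collapsed away'' by $s$, and then one computes that $st$ has image $\Omega_n\setminus\{l'\}$ and kernel class $\{i',j'\}$ with $l',i',j'$ determined by $k,i,j,l,p,q$ in a way that matches $\left(\begin{smallmatrix}i&j\\k&k\end{smallmatrix}\right)\left(\begin{smallmatrix}p&q\\l&l\end{smallmatrix}\right)$. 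All four subcases are of this flavour — routine but requiring care about when the product drops rank — so I would present one representative subcase in detail and remark that the others are analogous. Finally, for the ``in particular'' clause one only needs that each of the six semigroups is closed under $\phi_{\scriptscriptstyle 0}$: the images produced by $\phi_{\scriptscriptstyle 0}$ are $\emptyset$, transpositions $\binom{i}{j}\in\Sym_n$, and rank-$1$ idempotent-support maps $\left(\begin{smallmatrix}i&j\\k&k\end{smallmatrix}\right)$, all of which lie in $\PO_n\subseteq$ each of the listed semigroups — or, for the $\I$-versions, $\binom{i}{j}\in\POI_n$ wait, $\binom{i}{j}$ is order-reversing only if $i,j$ — here one uses instead that $\binom{i}{j}$ is a monotone partial permutation hence lies in $\PORI_n$ and $\POPI_n$, together with $\Sym_n\cap\POR_n=\D_{2n}\ni$ the identity — the point being simply to note membership, which is immediate from the definitions recalled in the preliminaries.
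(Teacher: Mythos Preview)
Your overall strategy --- direct case analysis on the ranks of $s$, $t$, and $st$ --- is exactly the ``routine'' verification the paper omits, and it is sound. Two slips need fixing, however.

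First, in the mixed case $s\in\Sym_n$, $t\in J_{n-1}^{\PT_n}$, you write $(st)\phi_{\scriptscriptstyle 0}=s^{-1}\cdot(t\phi_{\scriptscriptstyle 0})$ and justify it by ``$s^{-1}=s\phi_{\scriptscriptstyle 0}$''. This is wrong: by definition $\phi_{\scriptscriptstyle 0}$ is the \emph{identity} on $J_n^{\PT_n}$, so $s\phi_{\scriptscriptstyle 0}=s$. The computation you want is that if $\dom(t)=\Omega_n\setminus\{i\}$ then $\dom(st)=\Omega_n\setminus\{is^{-1}\}$, whence $(st)\phi_{\scriptscriptstyle 0}=\binom{is^{-1}}{j}$; and on the other side $s\cdot\binom{i}{j}$ has domain $\{x:xs=i\}=\{is^{-1}\}$, giving the same map. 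So $(st)\phi_{\scriptscriptstyle 0}=s\cdot(t\phi_{\scriptscriptstyle 0})=(s\phi_{\scriptscriptstyle 0})(t\phi_{\scriptscriptstyle 0})$, with $s$ rather than $s^{-1}$. The analogous full-transformation subcase works the same way.

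Second, the closure paragraph is muddled: $\binom{i}{j}$ is not a transposition in $\Sym_n$ but a rank-$1$ partial permutation, and the sentence drifts off. The clean argument is simply: for $s\in S$ of rank $n$ one has $s\phi_{\scriptscriptstyle 0}=s\in S$; for $\rank(s)\leqslant n-2$ one has $s\phi_{\scriptscriptstyle 0}=\emptyset\in S$; and for $\rank(s)=n-1$ the image $s\phi_{\scriptscriptstyle 0}$ is a rank-$1$ partial transformation, which is automatically order-preserving and hence lies in $\POI_n$ (if $s\in\I_n$) or in $\PO_n$ (in general), and therefore in each of the six listed $S$.
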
 

Notice that, for $S\in\{\POPI_n,\POP_n,\PORI_n,\POR_n\}$, we have $\ker(\phi_{\scriptscriptstyle 0}|_S)=\rho_{\pi}^{n-1}$, where $\pi$ is the universal congruence on a maximal subgroup of $S$ contained in $J_{n-1}^S$. Furthermore,
$J_n^S\phi_{\scriptscriptstyle 0}=J^S_n$, $J_{n-1}^{S}\phi_{\scriptscriptstyle 0}\subseteq J_{1}^{S}$ and $I_{n-2}^{S}\phi_{\scriptscriptstyle 0}=\{\emptyset\}$.

\medskip 

Let $G=J_n^S$ and $\sigma\in\nor_{\Sym_n}(G)$. Then, it is clear that $\phi_\sigma^S=\phi_{\scriptscriptstyle 0}|_S\phi_{{\I^G_1}}^{\sigma}$ is an endomorphism of $S$ 
such that  $J_n^S\phi_\sigma^S=J^S_n$, $J_{n-1}^{S}\phi_\sigma^S\subseteq J_{1}^{S}$ and $I_{n-2}^{S}\phi^S_\sigma=\{\emptyset\}$.
Moreover, 
$$
\{\phi^S_\sigma\mid \sigma\in\nor_{\Sym_n}(G)\}
$$
is a set of $|\nor_{\Sym_n}(G)|$ distinct endomorphisms of $S$. 
In fact, let $\sigma,\sigma'\in\nor_{\Sym_n}(G)$ be such that $\sigma\neq\sigma'$. 
Then, by (\ref{I1}), $\phi_{I_1^{\POI_n}}^\sigma\neq\phi_{I_1^{\POI_n}}^{\sigma'}$ and so there exist $1\leqslant i,j\leqslant n$ 
such that $\binom{i}{j}\phi_{I_1^{\POI_n}}^\sigma\neq\binom{i}{j}\phi_{I_1^{\POI_n}}^{\sigma'}$. 
Hence, being $s\in J_{n-1}^S$ such that $\dom(s)=\Omega_n\setminus\{i\}$ and 
$\im(s)=\Omega_n\setminus\{j\}$, we have 
$$\textstyle 
s\phi^S_\sigma=s\phi_{\scriptscriptstyle 0}|_S\phi_{{\I^G_1}}^{\sigma}=\binom{i}{j}\phi_{{\I^G_1}}^{\sigma}=\binom{i}{j}\phi_{I_1^{\POI_n}}^{\sigma}\neq 
\binom{i}{j}\phi_{I_1^{\POI_n}}^{\sigma'}=\binom{i}{j}\phi_{{\I^G_1}}^{\sigma'}=s\phi_{\scriptscriptstyle 0}|_S\phi_{{\I^G_1}}^{\sigma'}=s\phi^S_{\sigma'}
$$
and so $\phi^S_{\sigma}\neq\phi^S_{\sigma'}$. 

Moreover, we have: 

\begin{lemma}\label{kerneln-1}
Let $S\in\{\POPI_n,\POP_n,\PORI_n,\POR_n\}$ and let 
$\phi$ be an endomorphism of $S$ such that $J_n^S\phi=J^S_n$, $J_{n-1}^{S}\phi\subseteq J_{1}^{S}$ and $I_{n-2}^{S}\phi=\{\emptyset\}$.
Then $\phi=\phi^S_\sigma$ for some $\sigma\in\nor_{\Sym_n}(J_n^S)$. 
\end{lemma}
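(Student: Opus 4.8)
The plan is to recover from such a $\phi$ an explicit permutation $\sigma$ of $\Omega_n$, to verify that $\sigma\in\nor_{\Sym_n}(J_n^S)$, and then to check that $\phi$ and $\phi^S_\sigma$ agree on a generating set of $S$. Write $G=J_n^S$ and, for $1\leqslant i\leqslant n$, set $\varepsilon_i=\id|_{\Omega_n\setminus\{i\}}\in E(J_{n-1}^S)$. I would first observe that, since $J_n^S\phi=J_n^S$, the restriction $\phi|_G$ is a surjective endomorphism of the finite group $G$, hence an automorphism; in particular $u\phi\in G$ for every $u\in G$. Also, as $\varepsilon_i\in J_{n-1}^S$, each $\varepsilon_i\phi$ is an idempotent of $J_1^S$, so it has image a singleton $\{d_i\}$ and a domain $D_i$ with $d_i\in D_i$. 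For $i\neq j$ we have $\varepsilon_i\varepsilon_j=\id|_{\Omega_n\setminus\{i,j\}}\in I_{n-2}^S$, hence $(\varepsilon_i\phi)(\varepsilon_j\phi)=\emptyset$; since $\varepsilon_i\phi$ has constant image $\{d_i\}$, this is possible only if $d_i\notin D_j$ (and, symmetrically, $d_j\notin D_i$). Therefore $d_i\neq d_j$ whenever $i\neq j$, so $\{d_1,\dots,d_n\}=\Omega_n$, and then $D_j\subseteq\Omega_n\setminus\{d_i:i\neq j\}=\{d_j\}$; thus $\varepsilon_i\phi=\id|_{\{d_i\}}$ for all $i$. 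I then define $\sigma\in\Sym_n$ by $i\sigma=d_i$, so that $\varepsilon_i\phi=\id|_{\{i\sigma\}}=\sigma^{-1}(\varepsilon_i\phi_{\scriptscriptstyle 0})\sigma$.

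Next I would compute $\phi$ on $J_{n-1}^S$. If $s\in J_{n-1}^S$ is a partial permutation with $\dom(s)=\Omega_n\setminus\{i\}$ and $\im(s)=\Omega_n\setminus\{j\}$, then $\varepsilon_i s=s=s\varepsilon_j$, so $s\phi=(\varepsilon_i\phi)(s\phi)(\varepsilon_j\phi)=\id|_{\{i\sigma\}}\,(s\phi)\,\id|_{\{j\sigma\}}$; writing $s\phi\in J_1^S$ with domain $D$ and image $\{c\}$, the relation $(\varepsilon_i\phi)(s\phi)=s\phi$ forces $D=\{i\sigma\}$ and then $(s\phi)(\varepsilon_j\phi)=s\phi$ forces $c=j\sigma$, giving $s\phi=\sigma^{-1}(s\phi_{\scriptscriptstyle 0})\sigma$. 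When $S\in\{\POP_n,\POR_n\}$ one must in addition handle a full transformation $t\in J_{n-1}^S$, with (unique) non-singleton kernel class $\{i,j\}$ and image $\Omega_n\setminus\{k\}$: here $\varepsilon_a t\in I_{n-2}^S$ for every $a\notin\{i,j\}$, while $\varepsilon_i t$ and $\varepsilon_j t$ are rank-$(n-1)$ partial permutations with domains $\Omega_n\setminus\{i\}$, $\Omega_n\setminus\{j\}$ and common image $\Omega_n\setminus\{k\}$, so the case just treated applies to them; comparing $\phi(\varepsilon_a t)$ with $(\varepsilon_a\phi)(t\phi)=\id|_{\{a\sigma\}}(t\phi)$ for all $a$ then forces $\dom(t\phi)=\{i\sigma,j\sigma\}$ and $\im(t\phi)=\{k\sigma\}$, that is, $t\phi=\sigma^{-1}(t\phi_{\scriptscriptstyle 0})\sigma$.

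Finally, for $u\in G$ the element $u\varepsilon_i$ is a rank-$(n-1)$ partial permutation with domain $\Omega_n\setminus\{iu^{-1}\}$ and image $\Omega_n\setminus\{i\}$, so by the previous paragraph $\phi(u\varepsilon_i)$ has singleton domain $\{(iu^{-1})\sigma\}$; on the other hand $\phi(u\varepsilon_i)=(u\phi)\,\id|_{\{i\sigma\}}$ has domain $\{(i\sigma)(u\phi)^{-1}\}$. Equating these for all $i$ (and re-indexing via $i=\ell u$) yields $\sigma\,(u\phi)=u\,\sigma$ as permutations of $\Omega_n$, so $u\phi=\sigma^{-1}u\sigma$ and hence $\sigma^{-1}G\sigma=G\phi=G$; thus $\sigma\in\nor_{\Sym_n}(G)$ and $\phi^S_\sigma$ is defined. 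Since $x\phi^S_\sigma=\sigma^{-1}(x\phi_{\scriptscriptstyle 0})\sigma$ for every $x\in S$, the computations above show that $\phi$ and $\phi^S_\sigma$ coincide on $J_n^S\cup J_{n-1}^S$, which generates $S$ (a standard fact following from the generating sets recalled earlier, since every element of $\POI_n$ or $\PO_n$ of rank less than $n$ is a product of elements of rank $n-1$); hence $\phi=\phi^S_\sigma$. I expect the main obstacle to be the two places where spurious points in a domain must be excluded — showing that each $\varepsilon_i\phi$ has a singleton domain, and the reduction of the full rank-$(n-1)$ transformations to the partial-permutation case — both of which are an issue only for $S\in\{\POP_n,\POR_n\}$.
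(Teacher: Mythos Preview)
Your argument is correct, and it follows a genuinely different route from the paper's. The paper does not build $\sigma$ from first principles: instead it observes that $\phi|_T$ (for $T\in\{\POI_n,\PO_n\}$) is an endomorphism of $T$ with the analogous three properties, and then invokes \cite[Lemma~2.7]{Fernandes&Santos:2019} to get $\phi|_T=\phi_{\scriptscriptstyle 0}|_T\phi_{I_1^1}^\sigma$ for some $\sigma\in\Sym_n$; only after that does it compute $u\phi$ (via a factorization $us\,\H\,t$ with $t\in T$) to show $u\phi=\sigma^{-1}u\sigma$ and hence $\sigma\in\nor_{\Sym_n}(G)$, finally checking agreement on $G$, on $J_{n-1}^S$ (again via $s=ut$) and on $I_{n-2}^S$. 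Your approach instead extracts $\sigma$ directly from the images of the partial identities $\varepsilon_i=\id|_{\Omega_n\setminus\{i\}}$, pins down $\phi$ on all of $J_{n-1}^S$ by elementary domain/image bookkeeping (including the full-transformation case, which the paper handles implicitly through the $T=\PO_n$ citation), and then concludes by agreement on the generating set $J_n^S\cup J_{n-1}^S$. The paper's proof is shorter if one is willing to import the external lemma; yours is self-contained and makes transparent exactly where the ``spurious domain points'' are squeezed out, which is precisely the content that the citation to \cite{Fernandes&Santos:2019} hides. One cosmetic point: you occasionally slip into left-function notation (e.g.\ $\phi(u\varepsilon_i)$), whereas the paper writes maps on the right; you should make this consistent.
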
 
\begin{proof} 
First, observe that $\ker(\phi)=\rho_{\pi}^{n-1}$, where $\pi$ is the universal congruence on a maximal subgroup of $S$ contained in $J_{n-1}^S$. 
Hence, for $s,t\in J_{n-1}^S$, we have $s\phi=t\phi$ if and only if $s\H t$. 

\smallskip 

In order to simplify the notation, let us denote $J_n^S$ by $G$.  

Let $T\in\{\POI_n,\PO_n\}$ be such that $S\in\{\langle T\cup\{g\}\rangle,\langle T\cup\{g,h\}\rangle\}$. As $G\phi=G$ and $G$ is a subgroup of $S$, we get $1\phi=1$. Moreover, $I_{n-1}^S\phi\subseteq J_1^S\cup\{\emptyset\}=J_1^T\cup\{\emptyset\}$, whence $\phi|_T$ may be considered as an endomorphism of $T$. Furthermore, we have $1\phi|_T=1$, $J_{n-1}^T\phi|_T\subseteq J_1^T$ and $I_{n-2}^{T}\phi|_T=\{\emptyset\}$. Therefore, 
by \cite[Lemma 2.7]{Fernandes&Santos:2019}, we have 
$$
\phi|_T=\phi_{\scriptscriptstyle 0}|_T\phi_{I_1^1}^\sigma,
$$
for some $\sigma\in\Sym_n$. 

Let $1\leqslant i,j\leqslant n$ and take $s\in J_{n-1}^{\POI_n}$ 
such that $\dom(s)=\Omega_n\setminus\{i\}$ and $\im(s)=\Omega_n\setminus\{j\}$. 
Then $s\in J_{n-1}^T$ and $s\phi_{\scriptscriptstyle 0}=\binom{i}{j}$. 
Let $u\in G$. Then $us\in J_{n-1}^S$. 
Moreover, we also have $us\in J_{n-1}^{\PORI_n}$ and so 
there exists $t\in T$ such that $t\H us$. 
Since $\dom(us)=\Omega_n\setminus\{iu^{-1}\}$ and $\im(us)=\im(s)$, we get 
$$\textstyle 
t\phi_{\scriptscriptstyle 0}|_T =t\phi_{\scriptscriptstyle 0}=(us)\phi_{\scriptscriptstyle 0}=\binom{iu^{-1}}{j}. 
$$ 
Hence
\begin{align*}\textstyle  
\binom{iu^{-1}\sigma}{j\sigma} =\sigma^{-1}\binom{iu^{-1}}{j}\sigma=\binom{iu^{-1}}{j}\phi_{I_1^1}^\sigma
=t\phi_{\scriptscriptstyle 0}|_T \phi_{I_1^1}^\sigma = t\phi|_T=t\phi=(us)\phi = u\phi s\phi =  u\phi s\phi|_T = \\\textstyle 
= u\phi\cdot s\phi_{\scriptscriptstyle 0}|_T \phi_{I_1^1}^\sigma 
= u\phi\cdot \binom{i}{j} \phi_{I_1^1}^\sigma = u\phi\cdot \sigma^{-1}\binom{i}{j}\sigma 
= u\phi\cdot \binom{i\sigma}{j\sigma} = \binom{i\sigma(u\phi)^{-1}}{j\sigma}, 
\end{align*}
and so $iu^{-1}\sigma =i\sigma(u\phi)^{-1}$, i.e. $iu^{-1}\sigma(u\phi)=i\sigma$. 
Thus $u^{-1}\sigma(u\phi)=\sigma$ and so $u\phi=\sigma^{-1}u\sigma$. 
As $G\phi=G$, we may conclude that $\sigma^{-1}G\sigma\subseteq G$, i.e. $\sigma\in\nor_{\Sym_n}(G)$. 

\smallskip 

Now, take $s\in S$. 

If $s\in G$ then, by the above calculations, we have 
$s\phi=\sigma^{-1}s\sigma=s\phi^{\sigma}_{\I_1^G}=(s)\id\phi^{\sigma}_{\I_1^G} = s\phi_{\scriptscriptstyle 0}|_S\phi^{\sigma}_{\I_1^G}=s\phi^S_\sigma$. 

If $s\in J_{n-1}^S$ then $s=ut$, for some $t\in T$ and $u\in G$. Hence  
$$
t\phi=t\phi|_T = t\phi_{\scriptscriptstyle 0}|_T\phi_{I_1^1}^\sigma = (t\phi_{\scriptscriptstyle 0})\phi_{I_1^1}^\sigma = 
(t\phi_{\scriptscriptstyle 0})\phi_{\I_1^G}^\sigma 
= (t\phi_{\scriptscriptstyle 0}|_S)\phi_{\I_1^G}^\sigma 
= t\phi_{\scriptscriptstyle 0}|_S\phi_{\I_1^G}^\sigma
=t\phi^S_\sigma 
$$
and so 
$$
s\phi=(ut)\phi=u\phi t\phi = u\phi^S_\sigma t\phi^S_\sigma = (ut)\phi^S_\sigma = s\phi^S_\sigma. 
$$

Finally, if $s\in I_{n-2}^S$ then $s\phi=\emptyset=\sigma^{-1}\emptyset\sigma=\emptyset \phi^{\sigma}_{\I_1^G}
=(s\phi_{\scriptscriptstyle 0}|_S)\phi^{\sigma}_{\I_1^G}=s\phi^S_\sigma$. 

Thus $\phi=\phi^S_\sigma$ with $\sigma\in\nor_{\Sym_n}(J_n^S)$, as required. 
\end{proof}

Next, for each natural $m$, let $\ov{m}\in\Omega_n$ be such that $\ov{m}\equiv m\, \text{mod}\, n$ and, 
for $x\in\Omega_n$ and $k\in\Omega_n$ such that $\gcd(k,n)=1$, define a transformation $\sigma_{x,k}\in\T_n$ by 
$i\sigma_{x,k}=\ov{x+(i-1)k}$ for $i\in\Omega$, i.e. 
$$
\sigma_{x,k}= \begin{pmatrix}1 &2 &3 &\cdots &n\vspace*{.2em}\\
\ov{x} &\ov{x+k} &\ov{x+2k} &\cdots &\ov{x+(n-1)k}\end{pmatrix}. 
$$
From $\gcd(k,n)=1$, it is easy to deduce that $\sigma_{x,k}\in\Sym_n$. 
Let
$$
\mathcal{N}_n=\left\{\sigma_{x,k}\mid \mbox{$x,k\in\Omega_n$ and $\gcd(k,n)=1$}\right\}\subseteq\Sym_n.
$$

Let $\varphi:\N\longrightarrow\N$ be the Euler's totient function, i.e., 
for each $m\in\N$, $\varphi(m)$ is the number of naturals $k\in\{1,2,\ldots,m\}$ relatively prime to $m$. 
Clearly, $|\mathcal{N}_n|=n\varphi(n)$. 

\begin{lemma} \label{nn}
Let $n\geqslant3$ and $G\in\{\C_n,\D_{2n}\}$. Then $\nor_{\Sym_n}(G)=\mathcal{N}_n$ and so 
$|\nor_{\Sym_n}(G)|=n\varphi(n)$.
\end{lemma}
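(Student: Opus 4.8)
The plan is to reduce both cases to the single computation of $\nor_{\Sym_n}(\C_n)$, exploiting that $\C_n=\langle g\rangle$ is a subgroup of $\D_{2n}$ and that, for $n\geqslant3$, any permutation normalizing $\D_{2n}$ is forced to normalize $\C_n$ as well. Throughout I would use that, $G$ being finite, $\sigma\in\nor_{\Sym_n}(G)$ if and only if $\sigma^{-1}G\sigma\subseteq G$, together with the explicit formulas $ig^k=\ov{i+k}$, $ih=\ov{1-i}$ (equivalently $ih=n+1-i$) and $ihg^k=\ov{k+1-i}$ recorded just above the statement, and the fact already noted there that $|\mathcal{N}_n|=n\varphi(n)$.

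\emph{Step 1: $\nor_{\Sym_n}(\C_n)=\mathcal{N}_n$.} For $\mathcal{N}_n\subseteq\nor_{\Sym_n}(\C_n)$ I would check directly that $\sigma_{x,k}^{-1}g\,\sigma_{x,k}=g^k$ whenever $\gcd(k,n)=1$: from $i\sigma_{x,k}=\ov{x+(i-1)k}$ one gets $(ig)\sigma_{x,k}=\ov{x+ik}=(i\sigma_{x,k})g^k$ for every $i\in\Omega_n$, and since $\gcd(k,n)=1$ the element $g^k$ generates $\C_n$, so $\sigma_{x,k}$ normalizes $\C_n$. Conversely, if $\sigma\in\nor_{\Sym_n}(\C_n)$ then $\sigma^{-1}g\sigma$ is an $n$-cycle lying in $\C_n$, hence equals $g^k$ for some $k$ with $\gcd(k,n)=1$; rewriting this as $g\sigma=\sigma g^k$ and evaluating at each $i$ yields the recurrence $\ov{i+1}\,\sigma=\ov{(i\sigma)+k}$, which, putting $x:=1\sigma$, forces $\ov{i}\,\sigma=\ov{x+(i-1)k}$ for all $i$, i.e.\ $\sigma=\sigma_{x,k}\in\mathcal{N}_n$. (The consistency of the recurrence upon returning to $i=1$ is automatic, since $nk\equiv0\pmod n$.)

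\emph{Step 2: $\nor_{\Sym_n}(\D_{2n})=\mathcal{N}_n$.} For the inclusion $\nor_{\Sym_n}(\D_{2n})\subseteq\mathcal{N}_n$: if $\sigma^{-1}\D_{2n}\sigma\subseteq\D_{2n}$, then $\sigma^{-1}g\sigma$ is an $n$-cycle contained in $\D_{2n}$; as each reflection $hg^i$ has order $2$ (here $n\geqslant3$), the only $n$-cycles in $\D_{2n}$ are the rotations $g^k$ with $\gcd(k,n)=1$, so $\sigma^{-1}g\sigma\in\C_n$ and hence $\sigma\in\nor_{\Sym_n}(\C_n)=\mathcal{N}_n$ by Step 1. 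For the reverse inclusion: given $\sigma=\sigma_{x,k}\in\mathcal{N}_n$, Step 1 already gives $\sigma^{-1}g\sigma=g^k\in\D_{2n}$, and a direct computation with $ih=\ov{1-i}$ shows $\sigma^{-1}h\sigma$ is the transformation $i\mapsto\ov{(2x-k)-i}$, which is precisely $hg^{\,m}$ for $m\equiv 2x-k-1\pmod n$ (using $ihg^m=\ov{m+1-i}$). So $\sigma$ normalizes both $g$ and $h$, hence $\D_{2n}=\langle g,h\rangle$, and $\sigma\in\nor_{\Sym_n}(\D_{2n})$.

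Combining the two steps gives $\nor_{\Sym_n}(G)=\mathcal{N}_n$ for $G\in\{\C_n,\D_{2n}\}$, and since $(x,k)\mapsto\sigma_{x,k}$ is injective on $\{(x,k)\mid x\in\Omega_n,\ \gcd(k,n)=1\}$ (indeed $1\sigma_{x,k}=x$ and $2\sigma_{x,k}=\ov{x+k}$ recover $x$ and then $k$), we conclude $|\nor_{\Sym_n}(G)|=|\mathcal{N}_n|=n\varphi(n)$. The main obstacle is not conceptual but computational: carrying out carefully the two modular identities $\sigma_{x,k}^{-1}g\sigma_{x,k}=g^k$ and $\sigma_{x,k}^{-1}h\sigma_{x,k}=hg^{\,2x-k-1}$, keeping track of the $\ov{\,\cdot\,}$ convention and of the index ranges appearing in the definitions of $g^k$, $h$ and $hg^k$; the only structural input needed is the easy remark that, for $n\geqslant3$, every $n$-cycle of $\D_{2n}$ lies in $\C_n$.
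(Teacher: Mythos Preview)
Your proof is correct and follows essentially the same approach as the paper: both arguments hinge on the observation that, for $n\geqslant3$, the only elements of order $n$ in $G$ are the $g^k$ with $\gcd(k,n)=1$, which forces $\sigma^{-1}g\sigma=g^k$ and then the recurrence $(\ov{i+1})\sigma=\ov{i\sigma+k}$ yielding $\sigma=\sigma_{x,k}$; and conversely both verify $\sigma_{x,k}^{-1}g\sigma_{x,k}=g^k$ and $\sigma_{x,k}^{-1}h\sigma_{x,k}=hg^{\ov{2x-k-1}}$. The only difference is organizational: the paper treats $G\in\{\C_n,\D_{2n}\}$ simultaneously in each direction, whereas you first settle $\nor_{\Sym_n}(\C_n)=\mathcal{N}_n$ completely and then reduce the $\D_{2n}$ case to it, but the computations and structural inputs are identical.
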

\begin{proof}
Let $\sigma\in \nor_{\Sym_n}(G)$. Then, we may consider the automorphism $\phi_G^\sigma$ of $G$. Since $n\geqslant3$, the only elements of $G$ of order $n$ are of the form $g^k$ with $\gcd(k,n)=1$. Then $g\phi_G^\sigma=g^k$, i.e. $\sigma^{-1}g\sigma=g^k$, for some $k\in\Omega_n$ such that $\gcd(k,n)=1$. 
Hence $g\sigma=\sigma g^{k}$ and so $ig\sigma=i\sigma g^{k}$, i.e. $(\ov{i+1})\sigma=\ov{i\sigma +k}$, for $i\in\Omega_n$. 
Let $x=1\sigma$. Then $i\sigma=\ov{(i-1)\sigma +k}=\ov{x + (i-1)k}$, for $2\leqslant i\leqslant n$. Thus $\sigma=\sigma_{x,k}\in \mathcal{N}_n$.

\smallskip 

Conversely, take $x\in\Omega_n$ and $k\in\Omega_n$ such that $\gcd(k,n)=1$. 
Let $\sigma=\sigma_{x,k}$. 
Then, it is a routine matter to show that 
$\sigma^{-1}g\sigma=g^k$ and $\sigma^{-1}h\sigma=hg^{\ov{2x-k-1}}$. 
Hence $\sigma^{-1}g^j\sigma=g^{\ov{kj}}$ and $\sigma^{-1}hg^j\sigma=hg^{\ov{2x+(j-1)k-1}}$ for $0\leqslant j\leqslant n-1$, 
from which follows that $\sigma\in \nor_{\Sym_n}(G)$, as required. 
\end{proof}

\medskip 

Next, we recall that Ara\'ujo et. al. proved in \cite[Theorem 5.5]{Araujo&etal:2011} that 
$$
\auto(S)=\langle \phi^g_S,\phi^h_S\rangle \simeq\D_{2n},
$$
for  $S\in\{\OP_n,\POPI_n,\POP_n\}$ and $n\geqslant3$. 
On the other hand, in the same paper, but only for $n\geqslant17$, it was also proved that 
$$
\auto(S)=\langle \phi^g_S,\phi^h_S\rangle \simeq\D_{2n},
$$
for  $S\in\{\OR_n,\PORI_n,\POR_n\}$ (see \cite[Theorem 5.6]{Araujo&etal:2011}). In what follows, 
by using a different approach, 
we aim to extend this last result to $n\geqslant3$. 
In fact, the result is trivial for $n=3$, by the already mentioned Sullivan's Theorem \cite[Theorem 2]{Sullivan:1975} 
(see also \cite[Corollary 4.1]{Araujo&etal:2011}), 
since $\OR_3=\T_3$, $\PORI_3=\I_3$, $\POR_3=\PT_3$ and $\Sym_3=\D_{2\cdot3}$. 

\smallskip 

Let $n\geqslant4$ and $S\in\{\OR_n,\PORI_n,\POR_n\}$. Take $T\in\{\OP_n,\POPI_n,\POP_n\}$ such that $S=\langle T\cup\{h\}\rangle$. 

Recall that $\O_n$ and $\PO_n$ are idempotent-generated (see \cite{Gomes&Howie:1992}) semigroups. 
Therefore, $\OP_n=\langle \O_n\cup\{g\}\rangle=\langle E(\O_n)\cup\{g\}\rangle$ and 
$\POP_n=\langle \PO_n\cup\{g\}\rangle=\langle E(\PO_n)\cup\{g\}\rangle$. 
On the other hand, $\POPI_n=\langle g,s_1\rangle$, where 
$$
s_1=\begin{pmatrix} 
1&2&\cdots&n-2&n-1\\
1&2&\cdots&n-2&n
\end{pmatrix} 
$$
(see \cite{Fernandes:2000}). Moreover, being 
 $$
g_{n-1}=\begin{pmatrix} 
1&2&\cdots&n-2&n-1\\
2&3&\cdots&n-1&1
\end{pmatrix}, 
$$
which is a group-element of $\POPI_n$ of order $n-1$, we obtain $g_{n-1}g^{n-1}=s_1$ and so we also have $\POPI_n=\langle g,g_{n-1}\rangle$. 
 
Let $\phi\in\auto(S)$. Since $g$ is a group-element of $S$ of order $n$, $g\phi$ must be also a group-element of $S$ of order $n$. 
Hence $g\phi\in T$, since $S\setminus T$ only has group-elements of order $2$. Also, for $S=\PORI_n$, as $g_{n-1}$ is a group-element of $S$ of order $n-1$ and $n-1\geqslant3$, likewise we deduce that $g_{n-1}\phi\in T$. On the other hand, as $\phi$ applies idempotents in idempotents and $E(S)=E(T)$, we can conclude that $T\phi\subseteq T$ and so we may consider $\phi|_T$ as a automorphism of $T$. 
 
Let $\Phi:\auto(S)\longrightarrow\auto(T)$ be the mapping defined by $\phi\Phi=\phi|_T$, for all $\phi\in\auto(S)$. 
Clearly, $\Phi$ is a group homomorphism.

Let $\varphi\in\auto(T)$. Then, by \cite[Theorem 5.5]{Araujo&etal:2011}, $\varphi=\phi^\sigma_T$, for some $\sigma\in\D_{2n}$. 
Obviously, $\sigma^{-1}S\sigma\subseteq S$ and so we may consider $\phi^\sigma_S\in\auto(S)$ and, trivially, we have 
$\phi^\sigma_S\Phi=\phi^\sigma_S|_T=\phi^\sigma_T=\varphi$. 
Hence $\Phi$ is a surjective homomorphism. 

Next, we prove that $\Phi$ is injective. 
Let $\phi_1,\phi_2\in\auto(S)$ be such that $\phi_1\Phi=\phi_2\Phi$. 
Since $S$ contains constant idempotent transformations with arbitrary images, by \cite[Lemma 2.2]{Araujo&etal:2011}, we have 
$\auto(S)=\inn(S)$ and so $\phi_i=\phi^{\sigma_i}_S$ for some $\sigma_i\in\Sym_n$ such that $\sigma_i^{-1}S\sigma_i\subseteq S$, for $i=1,2$. 
Hence, since $\D_{2n}$ is the group of units of $S$, we also have $\sigma_i^{-1}\D_{2n}\sigma_i\subseteq \D_{2n}$ and so, by Lemma \ref{nn}, 
$\sigma_i=\sigma_{x_i,k_i}$ for some $x_i,k_i\in\Omega_n$ such that $\gcd(k_i,n)=1$, for $i=1,2$. 
Now, by the proof of Lemma \ref{nn}, we have 
$$
g^{k_1}=\sigma_1^{-1}g\sigma_1=g\phi^{\sigma_1}_S|_T=g\phi_1|_T=g\phi_2|_T=g\phi^{\sigma_2}_S|_T = \sigma_2^{-1}g\sigma_2 =g^{k_2}
$$
and so $k_1=k_2$, since $1\leqslant k_1,k_2\leqslant n$. On the other hand, let 
$$
\mbox{$
e= \begin{pmatrix} 
1&2&3&\cdots&n\\
2&2&3&\cdots&n
\end{pmatrix}
$,
if $S=\OP_n$, and 
$
e= \begin{pmatrix} 
2&3&\cdots&n\\
2&3&\cdots&n
\end{pmatrix}
$,
if $S\in\{\PORI_n,\POR_n\}$.}
$$
Then, it is easy to check that 
$$
\Omega_n\setminus\{x_1\}=\im(\sigma_1^{-1}e\sigma_1)=\im(e\phi^{\sigma_1}_S|_T)=
\im(e\phi_1|_T)=\im(e\phi_2|_T)=\im(e\phi^{\sigma_2}_S|_T) = \im(\sigma_2^{-1}e\sigma_2)= \Omega_n\setminus\{x_2\}
$$
and so $x_1=x_2$. Hence $\sigma_1=\sigma_2$ and thus $\phi_1=\phi_2$, as required. 
 
Therefore $\Phi$ is an isomorphism and so we have: 

\begin{theorem}\label{autoor}
Let $n\geqslant3$ and $S\in\{\OR_n,\PORI_n,\POR_n\}$. Then 
$
\auto(S)=\langle \phi^g_S,\phi^h_S\rangle \simeq\D_{2n}. 
$
\end{theorem}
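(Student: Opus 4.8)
The plan is to reduce the determination of $\auto(S)$, for $S\in\{\OR_n,\PORI_n,\POR_n\}$, to the already established description of $\auto(T)$ for $T\in\{\OP_n,\POPI_n,\POP_n\}$ given by Theorem 5.5 of \cite{Araujo&etal:2011}. I would first dispatch the case $n=3$ on its own: since $\OR_3=\T_3$, $\PORI_3=\I_3$, $\POR_3=\PT_3$ and $\Sym_3=\D_{2\cdot3}$, the statement is immediate from Sullivan's Theorem \cite[Theorem 2]{Sullivan:1975}. For $n\geqslant4$ I would fix $T$ with $S=\langle T\cup\{h\}\rangle$, construct the restriction map $\Phi\colon\auto(S)\longrightarrow\auto(T)$, $\phi\Phi=\phi|_T$, and then show it is a well-defined group isomorphism; composing with the known isomorphism $\auto(T)\simeq\D_{2n}$ and tracking the generators $\phi^g_T,\phi^h_T$ back through $\Phi$ will yield $\auto(S)=\langle\phi^g_S,\phi^h_S\rangle\simeq\D_{2n}$.

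For $\Phi$ to be well defined I must check $T\phi\subseteq T$ for every $\phi\in\auto(S)$. The key remark is that $\phi$ carries group-elements of order $n$ to group-elements of order $n$, while every group-element of $S$ lying outside $T$ has order $2$; hence $g\phi\in T$, and in the case $S=\PORI_n$ also $g_{n-1}\phi\in T$ because $g_{n-1}$ has order $n-1\geqslant3$. Together with the facts that $\OP_n$ and $\POP_n$ are generated by their idempotents together with $g$, that $\POPI_n=\langle g,g_{n-1}\rangle$, and that $E(S)=E(T)$ with $\phi$ preserving idempotents, this forces $T\phi\subseteq T$, so $\phi|_T\in\auto(T)$; that $\Phi$ is a homomorphism is clear. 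Surjectivity is straightforward: given $\varphi\in\auto(T)$, Theorem 5.5 of \cite{Araujo&etal:2011} gives $\varphi=\phi^\sigma_T$ with $\sigma\in\D_{2n}$; then $\sigma^{-1}S\sigma\subseteq S$, so $\phi^\sigma_S\in\auto(S)$ and $\phi^\sigma_S\Phi=\phi^\sigma_T=\varphi$.

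The step I expect to be the main obstacle is injectivity of $\Phi$. Here I would exploit that $S$ contains constant idempotent transformations with all possible images, so by Lemma 2.2 of \cite{Araujo&etal:2011} every automorphism of $S$ is inner: if $\phi_1|_T=\phi_2|_T$ then $\phi_i=\phi^{\sigma_i}_S$ for some $\sigma_i\in\Sym_n$ with $\sigma_i^{-1}S\sigma_i\subseteq S$, and since $\D_{2n}$ is the group of units of $S$ we get $\sigma_i\in\nor_{\Sym_n}(\D_{2n})$, which by Lemma \ref{nn} forces $\sigma_i=\sigma_{x_i,k_i}$ with $\gcd(k_i,n)=1$. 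It then remains to recover the two parameters $k_i$ and $x_i$ from the restriction to $T$: comparing $g\phi_1|_T=g\phi_2|_T$ and using $\sigma_i^{-1}g\sigma_i=g^{k_i}$ (from the proof of Lemma \ref{nn}) forces $k_1=k_2$, and comparing $\im(e\phi_1|_T)=\im(e\phi_2|_T)$ for a carefully chosen rank-$(n-1)$ idempotent $e$, one fixing all points of its image and chosen so that $\im(\sigma_i^{-1}e\sigma_i)=\Omega_n\setminus\{x_i\}$, forces $x_1=x_2$. Hence $\sigma_1=\sigma_2$, $\phi_1=\phi_2$, and $\Phi$ is injective. The delicate bookkeeping will be selecting $e$ suitably for each of the three semigroups and verifying that it really detects $x_i$; this is also where $n\geqslant4$ is genuinely used, so that the separate treatment of $n=3$ is forced rather than merely convenient.
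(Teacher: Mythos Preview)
Your proposal is correct and follows essentially the same approach as the paper's proof: the same separate treatment of $n=3$ via Sullivan's Theorem, the same restriction map $\Phi$ to $\auto(T)$, the same argument for $T\phi\subseteq T$ (orders of $g$ and $g_{n-1}$, idempotent preservation, the generating sets of $T$), the same surjectivity argument via \cite[Theorem 5.5]{Araujo&etal:2011}, and the same injectivity argument via $\auto(S)=\inn(S)$, Lemma~\ref{nn}, and recovering $k_i$ and $x_i$ from $g$ and a rank-$(n{-}1)$ idempotent. The paper's choice of $e$ is exactly the one you anticipate: $e=\left(\begin{smallmatrix}1&2&\cdots&n\\2&2&\cdots&n\end{smallmatrix}\right)$ for $\OR_n$ and $e=\id|_{\{2,\ldots,n\}}$ for $\PORI_n,\POR_n$, giving $\im(\sigma_i^{-1}e\sigma_i)=\Omega_n\setminus\{x_i\}$.
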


\medskip 

Now, we can present our main theorem: 

\begin{theorem}\label{endomain}
For $n\geqslant3$, let $S\in\{\OP_n,\POPI_n,\POP_n,\OR_n,\PORI_n,\POR_n\}$ and $\phi: S\longrightarrow S$ be any mapping. 
Then $\phi$ is an endomorphism of the semigroup $S$ if and only if one of the following properties holds:
\begin{enumerate}
\item $\phi=\phi_S^\sigma$, for some $\sigma\in\D_{2n}\simeq\auto(S)$, and so $\phi$ is an automorphism; 

\item $S\in\{\POPI_n,\POP_n,\PORI_n,\POR_n\}$ and $\phi=\phi^S_\sigma$, for some $\sigma\in \mathcal{N}_n$; 

\item there exist idempotents $e,f\in S$ with $e\not=f$ and $ef=fe=f$ such
that $J_n^S\phi=\{e\}$ and $I_{n-1}^S\phi=\{f\}$;

\item $S\in\{\POPI_n,\POP_n\}$ and 
there exists a group-element $g_0$ in $S$ of order $p$, with $p$ a divisor of $n$ and $p>1$, 
such that $g^i\phi=g_0^{i+p}$, for $0\leqslant i\leqslant n-1$, and $I_{n-1}^S\phi=\{\emptyset\}$;

\item $S\in\{\PORI_n,\POR_n\}$ and 
there exist group-elements $h_0$ of order $2$ and $g_0$ of order $p$, with $p$ a divisor of $n$ and $p>1$, in some maximal subgroup of $S$ such that 
$\langle g_0,h_0\rangle$ is a dihedral group of order $2p$, 
$g^i\phi=g_0^{i+p}$ and $(hg^i)\phi=h_0g_0^{i+p}$,  for $0\leqslant i\leqslant n-1$, 
and $I_{n-1}^S\phi=\{\emptyset\}$; 

\item $S\in\{\OR_n,\PORI_n,\POR_n\}$ and  
there exist a group-element $h_0$ in $S$ of order $2$ and an idempotent $f$ of $S$ with rank less than or equal to $2$ such that 
$h_0f=fh_0=f$, $I_{n-1}^S\phi=\{f\}$ and:
\begin{enumerate}
\item $g^i\phi=h_0^2$ and $(hg^i)\phi=h_0$, for $0\leqslant i\leqslant n-1$; or 
\item $g^i\phi=(hg^i)\phi=h_0^{i+2}$, for $0\leqslant i\leqslant n-1$, with $n$ even; or 
\item $g^i\phi=h_0^{i+2}$ and $(hg^i)\phi=h_0^{i+1}$, for $0\leqslant i\leqslant n-1$, with $n$ even. 
\end{enumerate}

\item $\phi$ is a constant mapping with idempotent value.
\end{enumerate}
\end{theorem}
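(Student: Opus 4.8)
The plan is to prove the two implications separately, the point being that almost all of the analytic work is already contained in the preceding lemmas, so that the proof of the theorem becomes an assembly argument organised around the kernel congruence $\ker(\phi)$.

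For the sufficiency direction I would check, one property at a time, that a map $\phi$ satisfying one of (1)--(7) is an endomorphism. Property (1) is trivial and (7) is immediate, since a constant map with idempotent value $c$ satisfies $(st)\phi = c = c\,c = (s\phi)(t\phi)$. For (2) I would invoke Lemma \ref{nn}, which identifies $\mathcal{N}_n$ with $\nor_{\Sym_n}(J_n^S)$ (recall $J_n^S \in \{\C_n,\D_{2n}\}$), so that each $\phi^S_\sigma$ with $\sigma \in \mathcal{N}_n$ is one of the maps $\phi_{\scriptscriptstyle 0}|_S\,\phi_{\I_1^G}^{\sigma}$ already shown to be an endomorphism just before Lemma \ref{kerneln-1}. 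For (3)--(6) I would use the partition $S = J_n^S \sqcup I_{n-1}^S$ and verify the homomorphism identity in two regimes: for products that remain in $J_n^S$ the verification reduces to checking that the prescribed images of $g$ (and, in the oriented cases, of $h$) extend to a group homomorphism from $\C_n$, respectively $\D_{2n}$, onto the stated cyclic or dihedral target, which holds precisely because of the divisibility hypothesis $p\mid n$ in (4), (5) and the parity hypothesis ``$n$ even'' in (6)(b), (6)(c); and for products falling into the ideal $I_{n-1}^S$ everything collapses to the single value $f$ (respectively $\emptyset$), using the absorption identities $ef=fe=f$ and $h_0f=fh_0=f$ of (3), (6) and, in (4), (5), the fact that $\emptyset$ is a zero for composition.

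For the necessity direction I would start from the observation that $\ker(\phi)$ is a congruence on $S$, hence, by Theorem \ref{con}, is either the universal congruence or $\rho^k_\pi$ for some $k \in \{1,\dots,n\}$ and some congruence $\pi$ on a maximal subgroup of $J_k^S$. If $\ker(\phi)$ is universal then $\phi$ is constant, with idempotent value because $S$ contains an idempotent $e$ and $e\phi=(e\phi)^2$; this is (7). If $\ker(\phi)=\rho^1_\pi$, then, since $\rho^1_\pi$ is the identity congruence, $\phi$ is injective and, as $S$ is finite, bijective, hence an automorphism, so by \cite[Theorem 5.5]{Araujo&etal:2011} (for $\OP_n,\POPI_n,\POP_n$) and Theorem \ref{autoor} (for $\OR_n,\PORI_n,\POR_n$) we have $\phi=\phi_S^\sigma$ for some $\sigma \in \D_{2n}$; this is (1). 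If $\ker(\phi)=\rho^k_\pi$ with $2\leqslant k\leqslant n-1$, then Lemma \ref{2leqn-1} forces $S \in \{\POPI_n,\POP_n,\PORI_n,\POR_n\}$, $k=n-1$, $J_n^S\phi=J_n^S$, $J_{n-1}^S\phi\subseteq J_1^S$ and $I_{n-2}^S\phi=\{\emptyset\}$, whereupon Lemma \ref{kerneln-1} together with Lemma \ref{nn} yields $\phi=\phi^S_\sigma$ for some $\sigma \in \mathcal{N}_n$; this is (2). Finally, if $\ker(\phi)=\rho^n_\pi$, then Lemma \ref{rhon} applies and its four alternatives correspond, in order, to (3), (4), (5), (6) (the semigroup restrictions recorded in Lemma \ref{rhon} match exactly those appearing in (3)--(6)); the only remaining task is to pass from the generator-level data $g\phi=g_0$, $h\phi=h_0$ to the explicit formulas $g^i\phi=g_0^{i+p}$, $(hg^i)\phi=h_0g_0^{i+p}$ (and, in the case coming from 4 of Lemma \ref{rhon}, $g^i\phi=h_0^{i+2}$ and so on), which follows at once from $\phi$ being a homomorphism and from the orders of $g_0$ and $h_0$. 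Since these congruence-types are exhaustive, every endomorphism falls under one of (1)--(7).

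The step requiring most care is bookkeeping rather than a genuine difficulty: I would need to make sure the case split on $\ker(\phi)$ is exhaustive and that the semigroup-dependent hypotheses and the arithmetic constraints in (1)--(7) are \emph{exactly} those permitted by Lemmas \ref{2leqn-1}, \ref{rhon} and Theorem \ref{con}: for instance that $\OP_n$ and $\OR_n$ never produce endomorphisms of type (2), that types (4)--(6) occur only for the stated classes of $S$, and that ``$p\mid n$'' and ``$n$ even'' are not merely necessary but actually suffice for the prescribed assignments on $g$, $h$ to extend to genuine homomorphisms of $\C_n$ or $\D_{2n}$. A companion point, needed so that none of (3)--(6) is vacuous, is to exhibit inside $S$ the required idempotents and group-elements $f,g_0,h_0$ with the stated rank and fixed-point constraints; this is done using the descriptions of the maximal subgroups and of $E(S)$ recalled in Section \ref{endo}.
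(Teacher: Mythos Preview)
Your proposal is correct and follows essentially the same approach as the paper: organise the necessity direction around the classification of $\ker(\phi)$ given by Theorem~\ref{con}, and dispatch the four cases (universal, $k=1$, $2\leqslant k\leqslant n-1$, $k=n$) by appealing to \cite[Theorem 5.5]{Araujo&etal:2011}/Theorem~\ref{autoor}, Lemmas~\ref{2leqn-1} and~\ref{kerneln-1} with Lemma~\ref{nn}, and Lemma~\ref{rhon}, respectively; for sufficiency the paper, like you, treats (1), (2), (3), (7) as clear and (4)--(6) as routine verifications. Your only superfluous remark is the ``companion point'' about non-vacuity of (3)--(6): the theorem is an if-and-only-if characterisation and remains true even if some type happens to be empty for a particular $S$ and $n$, so exhibiting witnesses is not part of the proof (though the counting in Section~\ref{endonumber} does this implicitly).
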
 

\begin{proof}
If $\phi: S\longrightarrow S$ is a mapping that checks 1, 2, 3 or 7 then it is clear that $\phi$ is an endomorphism of $S$. 
On the other hand, we can routinely verify that the mappings defined in 4, 5 and 6 are endomorphisms of $S$. 

\smallskip 

Conversely, let us admit that $\phi$ is an endomorphism of $S$. 

If $\ker(\phi)$ is the universal congruence then, obviously, Property 7 holds. 

Next, suppose that $\ker(\phi)=\rho^k_\pi$, where $\pi$ is a congruence on a fixed maximal subgroup contained in $J^S_k$, 
for some $k\in\{1,2,\ldots,n\}$. 

If $k=1$ then $\phi$ is an automorphism of $S$ and so, by \cite[Theorem 5.5]{Araujo&etal:2011} and Theorem \ref{autoor}, Property 1 holds. 

If $2\leq k\leq n-1$ then, by Lemmas \ref{2leqn-1}, \ref{kerneln-1} and \ref{nn}, Property 2 holds. 

Finally, if $k=n$ then, by Lemma \ref{rhon}, Property 3, 4, 5 or 6 holds, as required. 
\end{proof}

Let $S\in\{\OP_n,\POPI_n,\POP_n,\OR_n,\PORI_n,\POR_n\}$ and let $\phi: S\longrightarrow S$ be an endomorphism of $S$. 
For $i\in\{1,2,\ldots,7\}$, 
we say that $\phi$ is an endomorphism of \textit{type} $i$ if $\phi$ satisfies $i$ of the previous theorem. 

\section{On the number of endomorphisms} \label{endonumber}

In this section we will give formulae for the cardinals of the monoids of endomorphisms of the semigroups 
$\OP_n$, $\POPI_n$, $\POP_n$, $\OR_n$, $\PORI_n$ and $\POR_n$. 

\smallskip 

In what follows it is convenient to use the following notation. For a nonempty subset $X$ of $\Omega_n$, 
let $\O(X)=\T(X)\cap\PO_n$, $\OP(X)=\T(X)\cap\POP_n$ and $\POP(X)=\PT(X)\cap\POP_n$. 

\smallskip 

Let $S\in\{\OP_n,\POPI_n,\POP_n,\OR_n,\PORI_n,\POR_n \}$. For $e\in E(S)$, define  
$$
E_{S}(e)=\{f\in E(S)\mid ef=fe=f\}.
$$
Clearly, for $e\in E(S)$, $|E_S(e)|$ is the number of endomorphisms of $S$ of types $3$ and $7$ such that $J_n^S\phi=\{e\}$. 
Thus $\sum_{e\in E(S)}|E_S(e)|$ is the total number of endomorphisms of $S$ of types $3$ and $7$. 

\medskip 

As usual, for $i\geqslant0$, denote the $i$th Fibonacci number by $F_i$.  
Recall that $|E(\OP_n)|=F_{2n-1}+F_{2n+1}-n^2+n-2$ \cite[Theorem 2.10]{Catarino&Higgins:1999} and 
$|E(\POP_n)|=1+\sum_{k=1}^{n}\binom{n}{k}(\tau^{2k}+\theta^{2k}-k^{2}+k-2)$ \cite[Theorem 3.2]{Fernandes&Gomes&Jesus:2011}, 
where $\tau=\frac{1+\sqrt{5}}{2}$ and $\theta=\frac{1-\sqrt{5}}{2}$. 

\medskip 

Let $e$ be an idempotent of $S$ with rank $k\in\{1,2,\ldots,n\}$.  

\smallskip 

Suppose that $S\in\{\OP_n,\POP_n\}$ and let $V=\OP(\im(e))$, if $S=\OP_n$, or $V=\POP(\im(e))$, if $S=\POP_n$. 

Let $f\in E(V)$. 
Then, clearly, $ef\in S$ and, since $\im(f)\subseteq\im(e)=\fix(e)$, we have $fe=f$, 
whence $(ef)^2=ef$ and $(ef)e=ef=e(ef)$, i.e. $ef\in E_S(e)$. 
Thus, we may define a mapping $\psi: E(V)\longrightarrow E_S(e)$ by $f\psi=ef$ for all $f\in E(V)$. 

Let $u,v\in E(V)$ be such that $eu=ev$. Let $x\in\dom(u)$. Then $x\in\im(e)=\fix(e)$ and so $x=xe$, whence 
$x\in\dom(eu)=\dom(ev)$ and so $x\in\dom(v)$. Moreover, $xu=xeu=xev=xv$ and so $u=v$. 
Thus $\psi$ is injective. 

Let $f'\in E_S(e)$. Since $f'e=f'$, we have $\im(f')\subseteq\im(e)$ and so $f'|_{\im(e)}\in E(V)$. 
On the other hand, from $f'=ef'$ it follows that $\dom(f')=\dom(ef')$. 
Hence, if $x\in\dom(f')$ then $x\in\dom(e)$ and $xe\in\dom(f')\cap\im(e)$, i.e. $x\in\dom(e\cdot f'|_{\im(e)})$. 
Conversely, it is clear that $\dom(e\cdot f'|_{\im(e})\subseteq \dom(ef')=\dom(f')$, whence 
$\dom(e\cdot f'|_{\im(e)}) = \dom(f')$. Moreover, for  $x\in\dom(f')$, 
we get $x(e\cdot f'|_{\im(e)})=xef'=xf'$ and so 
$f'|_{\im(e)}\psi=e\cdot f'|_{\im(e)}=f'$. 
So $\psi$ is also surjective. 

Therefore $|E_{\OP_n}(e)|=|E(\OP(\im(e)))|=|E(\OP_k)|$ and $|E_{\POP_n}(e)|=|E(\POP(\im(e)))|=|E(\POP_k)|$. 

\smallskip 

Now, consider $S=\POPI_n$. Then, it is easy to check that $f\in E_{\POPI_n}(e)$ if and only if $\im(f)\subseteq\im(e)$, for any idempotent $f$ of $\POPI_n$.   
Hence  $|E_{\POPI_n}(e)|=2^{|\im(e)|}=2^k$. 

\smallskip 

Since $E(\OR_n)=E(\OP_n)$, $E(\PORI_n)=E(\POPI_n)$ and $E(\POR_n)=E(\POP_n)$, we proved: 

\begin{lemma}
Let $S\in\{\OP_n,\POPI_n,\POP_n,\OR_n,\PORI_n,\POR_n \}$ and let $e$ be an idempotent of $S$ with rank $k\in\{1,2,\ldots,n\}$.  
Then: 
\begin{enumerate}
\item $|E_S(e)|= F_{2k-1}+F_{2k+1}-k^2+k-2$, 
for $S\in\{\OP_n,\OR_n\}$; 
\item $|E_S(e)|= 2^k$, 
for $S\in\{\POPI_n,\PORI_n\}$;
\item $|E_S(e)|= 1+\sum_{i=1}^{k}\binom{k}{i}(\tau^{2i}+\theta^{2i}-i^{2}+i-2)$, 
for $S\in\{\POP_n,\POR_n\}$. 
\end{enumerate}
\end{lemma}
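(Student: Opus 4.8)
The plan is to reduce the count $|E_S(e)|$ to an enumeration of idempotents of a transformation semigroup carried by the set $\im(e)$ alone, and then to feed in the already-known cardinalities of $E(\OP_k)$ and $E(\POP_k)$ recalled above. Throughout, note that $|\im(e)|=\rank(e)=k$.

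First I would treat the full-transformation cases $S\in\{\OP_n,\POP_n\}$. Put $V=\OP(\im(e))$ when $S=\OP_n$ and $V=\POP(\im(e))$ when $S=\POP_n$, and define $\psi\colon E(V)\longrightarrow E_S(e)$ by $f\psi=ef$. The points to check are: (i) $\psi$ is well defined, i.e.\ $ef\in E_S(e)$: since $\im(f)\subseteq\im(e)=\fix(e)$ we get $fe=f$, hence $(ef)^2=ef$ and $(ef)e=ef=e(ef)$; (ii) $\psi$ is injective: if $eu=ev$ with $u,v\in E(V)$, then comparing the two maps on $\dom(u)$, each point of which is fixed by $e$, forces $u=v$; (iii) $\psi$ is surjective: given $f'\in E_S(e)$, from $f'e=f'$ we obtain $\im(f')\subseteq\im(e)$, so $f'|_{\im(e)}\in E(V)$, and from $f'=ef'$ one verifies that $\dom(e\cdot f'|_{\im(e)})=\dom(f')$ and that the two maps agree there, whence $f'|_{\im(e)}\psi=f'$. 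Since the order on $\im(e)$ is the restriction of the order on $\Omega_n$, a sequence of elements of $\im(e)$ is cyclic in $\im(e)$ exactly when it is cyclic in $\Omega_n$; therefore $\OP(\im(e))\cong\OP_k$ and $\POP(\im(e))\cong\POP_k$, and the bijection $\psi$ gives $|E_{\OP_n}(e)|=|E(\OP_k)|$ and $|E_{\POP_n}(e)|=|E(\POP_k)|$.

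For $S=\POPI_n$ I would argue directly, using that $E(\POPI_n)=E(\I_n)=\{\id|_X\mid X\subseteq\Omega_n\}$ and $\id|_X\cdot\id|_Y=\id|_{X\cap Y}$: since $\dom(e)=\im(e)$ for the partial identity $e$, we have $\id|_X\in E_{\POPI_n}(e)$ if and only if $X\subseteq\im(e)$, so $|E_{\POPI_n}(e)|=2^{|\im(e)|}=2^k$. Finally, because $E(\OR_n)=E(\OP_n)$, $E(\PORI_n)=E(\POPI_n)$ and $E(\POR_n)=E(\POP_n)$, and the set $E_S(e)$ depends only on the idempotent set of $S$ together with the ambient multiplication, the counts for $\OR_n$, $\PORI_n$, $\POR_n$ equal those for $\OP_n$, $\POPI_n$, $\POP_n$ respectively. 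Substituting $|E(\OP_k)|=F_{2k-1}+F_{2k+1}-k^2+k-2$ and $|E(\POP_k)|=1+\sum_{i=1}^{k}\binom{k}{i}(\tau^{2i}+\theta^{2i}-i^{2}+i-2)$ then yields the three asserted formulas.

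The only slightly delicate step is the surjectivity argument for $\psi$ in the genuinely partial case $S=\POP_n$ — the domain bookkeeping showing that $f'|_{\im(e)}$ lies in $E(V)$ and that $e\cdot f'|_{\im(e)}$ reconstructs $f'$ on the nose; the rest is routine, and all of the heavy enumerative work is already contained in the cited references of Catarino--Higgins and Fernandes--Gomes--Jesus.
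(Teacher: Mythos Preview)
Your proposal is correct and follows essentially the same approach as the paper: the same bijection $\psi\colon E(V)\to E_S(e)$, $f\mapsto ef$, with the same verifications of well-definedness, injectivity and surjectivity, the same direct count for $\POPI_n$ via partial identities, and the same transfer to the $\OR$-, $\PORI$-, $\POR$-cases via equality of idempotent sets. Your added remark that cyclicity in $\im(e)$ coincides with cyclicity in $\Omega_n$ makes explicit what the paper leaves implicit, but otherwise the arguments coincide.
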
 

\medskip 

Next, notice that $|E_S(e)|$ depends only on the rank of $e$, for all $e\in E(S)$. 
Therefore, the number of endomorphisms of $S$ of types 3 and 7 is: 
\begin{enumerate}
\item 
$\sum_{e\in E(S)}|E_S(e)|
=\sum_{k=1}^n|E(J_k^S)|(F_{2k-1}+F_{2k+1}-k^2+k-2)$,
for $S\in\{\OP_n,\OR_n\}$;
\item 
$\sum_{e\in E(S)}|E_S(e)|
=\sum_{k=0}^n|E(J_k^S)|2^k=\sum_{k=0}^n\binom{n}{k}2^k=3^n$,
for $S\in\{\POPI_n,\PORI_n\}$;
\item 
$\sum_{e\in E(S)}|E_S(e)|
=\sum_{k=0}^n|E(J_k^S)|(1+\sum_{i=1}^{k}\binom{k}{i}(\tau^{2i}+\theta^{2i}-i^{2}+i-2))$,
for $S\in\{\POP_n,\POR_n\}$. 
\end{enumerate}

\smallskip 

Notice that it is clear that $|E(J_k^{\POPI_n})|=\binom{n}{k}$, for $k\in\{0,1,\ldots,n\}$. 

\medskip

Next, we compute the number of idempotents of $\OP_n$ with rank $k$ for $k\in\{1,2,\ldots,n\}$ . 

\begin{lemma}\label{EJOPnk}
Let $k\in\{1,2,\ldots,n\}$. Then the number of idempotents of $\OP_n$ with rank $k$ is 
\begin{equation*}
\begin{cases}
n \quad & \mbox{if $k=1$}\\
\sum_{i=1}^{n-k+1}i^{2}\binom{n-i+k-2}{2k-3} \quad &\mbox{if $k\geqslant2$}.
\end{cases}
\end{equation*}
\end{lemma}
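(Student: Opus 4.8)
The plan is to count idempotents of $\OP_n$ of rank $k$ by recalling the structure of such idempotents as transformations of a chain. An idempotent $e\in\OP_n$ of rank $k$ is determined by its image $Y=\im(e)=\fix(e)$, a $k$-subset of $\Omega_n$, together with the way the remaining $n-k$ points of $\Omega_n$ are folded onto $Y$ so that the resulting map is orientation-preserving. The case $k=1$ is immediate: a rank-$1$ idempotent is a constant map with value a fixed point, and there are exactly $n$ such maps, so we may assume $k\geqslant 2$ from now on.

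For $k\geqslant 2$ I would use the description of orientation-preserving idempotents coming from the cyclic structure: an orientation-preserving map is, up to a rotation $g^j$, an order-preserving map, but for an idempotent the fixed-point set must be respected, so one reduces to counting order-preserving idempotents on each ``arc'' between consecutive fixed points around the cycle. Concretely, write $Y=\{y_1<y_2<\cdots<y_k\}$ for the image. The points strictly between $y_t$ and $y_{t+1}$ (cyclically, with $y_{k+1}:=y_1$ and the arc ``wrapping around'' through $n$ and $1$) must each be sent to either $y_t$ or $y_{t+1}$ in an order-preserving way along that arc — that is, an initial segment of the arc goes to $y_t$ and the complementary final segment goes to $y_{t+1}$. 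Thus each of the $k$ gaps contributes independently a number of choices equal to (length of the gap)$\,+\,1$, \emph{except} that one must be careful about the wrap-around gap and about the orientation-preserving (as opposed to merely order-preserving) condition, which is exactly where a single ``descent'' is permitted. I expect the clean way to organize this is: fix the size $i$ of the block of points mapped into the wrap-around region (equivalently, parametrize by where the unique descent sits), which produces the outer factor, and then distribute the remaining $n-i-k$ ``free'' non-fixed points among $2k-3$ ordered slots, giving the binomial coefficient $\binom{n-i+k-2}{2k-3}$; the factor $i^2$ then records the two independent choices (one on each side of the distinguished arc) each ranging over roughly $i$ possibilities.

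Carrying this out, I would: (i) set up the parametrization of a rank-$k$ orientation-preserving idempotent by its image $Y$ and the ``breakpoints'' inside each gap; (ii) observe that orientation-preservation is equivalent to all gaps being handled in an order-preserving fashion \emph{after} a single cyclic rotation, so exactly one gap is the ``special'' one; (iii) sum over the size $i$ of the relevant block, and for each fixed $i$ count the number of ways to place $Y$ and the remaining breakpoints, recognizing the count as $i^2\binom{n-i+k-2}{2k-3}$; and (iv) sum over $i$ from $1$ to $n-k+1$, the range being dictated by the requirement that the special block is nonempty-to-maximal and that $k$ points remain available to serve as the image.

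The main obstacle I anticipate is the bookkeeping in steps (ii)--(iii): making precise which rotation to apply so that ``orientation-preserving idempotent'' becomes ``order-preserving on each arc,'' and then verifying that the two side-counts genuinely each contribute a factor of $i$ (hence $i^2$) rather than, say, $i+1$ or $i-1$ — the off-by-one accounting here is delicate and is exactly what pins down both the summand $i^2\binom{n-i+k-2}{2k-3}$ and the summation bounds $1\leqslant i\leqslant n-k+1$. A useful sanity check along the way is to verify the formula against small cases (e.g. $k=n$, forcing $e=\id$ and the sum to collapse to $1$; and $k=2$), and against the known total $|E(\OP_n)|=F_{2n-1}+F_{2n+1}-n^2+n-2$ from \cite[Theorem 2.10]{Catarino&Higgins:1999} by summing over $k$.
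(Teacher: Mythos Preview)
Your structural picture is right: an idempotent $e\in E(J_k^{\OP_n})$ has kernel classes that are cyclic intervals, each containing its fixed point, so for a fixed image $Y$ the idempotent is determined by a breakpoint in each of the $k$ cyclic gaps. But your plan for extracting the summand $i^{2}\binom{n-i+k-2}{2k-3}$ from this is vague at exactly the point that matters, and your proposed meaning for $i$ (``size of the block mapped into the wrap-around region'') and for the factor $i^{2}$ (``two sides of the distinguished arc'') do not match what actually produces the formula. As stated, the bookkeeping you flag as the obstacle has not been done, and the interpretation you sketch would not obviously give $i^{2}$ rather than some product over gap lengths.

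The paper's argument is different and tidier. It does \emph{not} parametrize by the image $Y$; instead it anchors at the point $1\in\Omega_n$ and lets $i=|(1e)e^{-1}|$ be the size of the kernel class of $1$. This class is a cyclic interval containing $1$, so there are exactly $i$ choices for it (namely $\{1,\dots,i\},\{n,1,\dots,i-1\},\dots,\{n-i+2,\dots,n,1\}$), and once it is fixed there are $i$ choices for which of its elements is the fixed point $1e$; that is the source of $i^{2}$. Removing this block leaves a genuine linear chain $\Omega_n\setminus M$ of size $n-i$, and $e$ restricted there is an \emph{order-preserving} idempotent of rank $k-1$; the count of these is taken off the shelf from Laradji--Umar, giving $\binom{(n-i)+(k-1)-1}{2(k-1)-1}=\binom{n-i+k-2}{2k-3}$. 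Summing over $i$ finishes. The gain over your approach is that anchoring at $1$ breaks the cyclic symmetry once and for all and reduces immediately to a known linear count, avoiding any delicate accounting of ``the special gap''.
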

\begin{proof}
If $k=1$ then, clearly, $|E(J^{\OP_n}_{1})|=n$.

So,  consider $k\geqslant2$. Let $e\in E(J^{\OP_n}_{k})$ and $i=|(1e)e^{-1}|$. 
Then $1\leqslant i\leqslant n-k+1$ and we may build $e$ as described below. 

First, there are $i$ choices for the convex set $(1e)e^{-1}$, namely $\{1,2,\ldots,i\}$, $\{1,\ldots,i-1,n\}$,  $\{1,\ldots,i-2,n-1,n\}$, \ldots, 
$\{1,2,n-i+3,\ldots,n\}$ and $\{1,n-i+2,\ldots,n\}$. Once $(1e)e^{-1}$ is chosen, there are $i$ choices for $1e$. 

Now, let $M=(1e)e^{-1}$.  
Then $|\Omega_n\setminus M|=n-i$ and $e|_{\Omega_n\setminus M}$ is an idempotent of $\O(\Omega_n\setminus M)$ with rank $k-1$. 
Hence, by \cite[Corollary 4.4]{Laradji&Umar:2006}, we $e$ can be extended from $e|_{M}$ to an idempotent of rank $k$ of $\OP_n$ in 
$|E(J^{\O(\Omega_n\setminus M)}_{k-1})|=\binom{n-i+k-2}{2k-3}$ ways and so 
the number of such idempotents $e$ is $i^{2}\binom{n-i+k-2}{2k-3}$. 

Thus, since $1\leqslant i\leqslant n-k+1$, we conclude that $E(J^{\OP_n}_{k})=\sum_{i=1}^{n-k+1}i^{2}\binom{n-i+k-2}{2k-3}$, as required.
\end{proof}

\smallskip 

At this point, we can count the number of endomorphisms of $\OP_n$. 
Since there are $2n$ endomorphisms of $\OP_n$ of type 1 (i.e. automorphisms) and the remaining endomorphisms of $\OP_n$ are of types 3 and 7, 
we have:  

\begin{theorem}
For $n\geqslant3$, the semigroup $\OP_n$ has 
$$
3n + \sum_{k=2}^{n}\sum_{i=1}^{n-k+1}i^{2}\binom{n-i+k-2}{2k-3}(F_{2k-1}+F_{2k+1}-k^2+k-2) 
$$
endomorphisms. 
\end{theorem}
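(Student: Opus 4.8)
The plan is to apply Theorem~\ref{endomain} to $S=\OP_n$, observe that only endomorphisms of types~$1$, $3$ and~$7$ can occur, and then count each family separately. First I would note that $\OP_n$ is excluded from the hypotheses of properties~$2$, $4$, $5$ and~$6$ of Theorem~\ref{endomain}, so every endomorphism of $\OP_n$ has type~$1$, $3$ or~$7$; moreover these three families are pairwise disjoint, since an endomorphism of type~$3$ or~$7$ sends the whole group of units $J_n^{\OP_n}=\C_n$ to a single idempotent and hence is not injective (so is not an automorphism), while a type~$7$ endomorphism is constant, forcing $f=e$ in the description of type~$3$ and thus contradicting $e\neq f$. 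By \cite[Theorem 5.5]{Araujo&etal:2011} (see also Theorem~\ref{autoor}), the endomorphisms of type~$1$ are exactly the $|\auto(\OP_n)|=|\D_{2n}|=2n$ automorphisms.

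Next I would count the endomorphisms of types~$3$ and~$7$ together. As recorded right after the definition of $E_S(e)$, for each idempotent $e$ of $\OP_n$ the number of endomorphisms $\phi$ of types~$3$ and~$7$ with $J_n^{\OP_n}\phi=\{e\}$ is precisely $|E_{\OP_n}(e)|$, so the total count is $\sum_{e\in E(\OP_n)}|E_{\OP_n}(e)|$. Since $|E_{\OP_n}(e)|=F_{2k-1}+F_{2k+1}-k^2+k-2$ depends only on the rank $k$ of $e$, this equals $\sum_{k=1}^{n}|E(J_k^{\OP_n})|\,(F_{2k-1}+F_{2k+1}-k^2+k-2)$. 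Now I would substitute the formula for $|E(J_k^{\OP_n})|$ from Lemma~\ref{EJOPnk} and split off the term $k=1$: there $|E(J_1^{\OP_n})|=n$ and $F_{1}+F_{3}-1^{2}+1-2=1+2-2=1$, so this term contributes exactly $n$, while for $k\geqslant2$ one gets $\sum_{i=1}^{n-k+1}i^{2}\binom{n-i+k-2}{2k-3}$ copies of the Fibonacci expression. Hence the endomorphisms of types~$3$ and~$7$ number
$$
n+\sum_{k=2}^{n}\sum_{i=1}^{n-k+1}i^{2}\binom{n-i+k-2}{2k-3}\bigl(F_{2k-1}+F_{2k+1}-k^2+k-2\bigr).
$$

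Finally, adding the $2n$ automorphisms of type~$1$ to the leading $n$ produces the $3n$ in the statement, which gives the claimed total. There is no real obstacle here: the argument is pure bookkeeping on top of Theorem~\ref{endomain}, Lemma~\ref{EJOPnk} and the value of $|E_{\OP_n}(e)|$; the only points needing a line of justification are that types~$1$, $3$ and~$7$ are the only ones possible for $\OP_n$ and are counted without overlap, and that the $k=1$ summand of the type~$3$/$7$ count collapses to $n$ — which is exactly the source of the constant $3n$.
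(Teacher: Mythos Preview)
Your proposal is correct and follows essentially the same approach as the paper: invoke Theorem~\ref{endomain} to see that for $\OP_n$ only types~$1$, $3$ and~$7$ occur, count the $2n$ automorphisms of type~$1$, and evaluate $\sum_{e\in E(\OP_n)}|E_{\OP_n}(e)|$ via Lemma~\ref{EJOPnk} and the formula $|E_{\OP_n}(e)|=F_{2k-1}+F_{2k+1}-k^2+k-2$, splitting off the $k=1$ term to obtain the constant $3n$. Your extra remarks on the pairwise disjointness of the three types are a welcome clarification that the paper leaves implicit.
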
 

\medskip 

Now, we compute the number of idempotents of $\POP_n$ with rank $k$ for $k\in\{1,2,\ldots,n\}$. 
Of course we have $|E(J_0^{\POP_n})|=1$.  

\begin{lemma} \label{EJPOPnk}
Let $k\in\{1,2,\ldots,n\}$. Then the number of idempotents of $\POP_n$ with rank $k$ is 
\begin{equation*}
\begin{cases}
n2^{n-1}\quad & \mbox{if $k=1$}\\
\sum_{i=k}^{n}(\binom{n}{i}\sum_{j=1}^{i-k+1}j^{2}\binom{i-j+k-2}{2k-3}) \quad &\mbox{if $k\geqslant2$}.
\end{cases}
\end{equation*}
\end{lemma}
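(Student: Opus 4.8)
The plan is to mimic the proof of Lemma~\ref{EJOPnk}, stratifying the idempotents of $\POP_n$ by the size of their domain rather than by a kernel class, since an idempotent of $\POP_n$ need not be a full transformation. The case $k=1$ I would handle directly: an idempotent $e$ of $\POP_n$ of rank $1$ is exactly a constant map with $\im(e)=\{j\}$ defined on a set $D=\dom(e)$ containing $j$, and every such map is orientation-preserving since a constant image sequence is cyclic; there are $n$ choices for $j$ and $2^{n-1}$ choices for $D$, giving $n2^{n-1}$.

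For $k\geqslant2$, the key step is a reduction to $\OP_i$. If $e\in E(\POP_n)$ with $\dom(e)=D$ and $|D|=i$, then $e^2=e$ forces $\im(e)\subseteq D$, so the restriction $e|_D$ is an idempotent full transformation of the chain $D$; conversely, any idempotent full transformation of $D$, viewed as a partial transformation of $\Omega_n$ with domain $D$, is idempotent in $\PT_n$. Furthermore $e$ lies in $\POP_n$ precisely when $e|_D$ is orientation-preserving with $D$ regarded as a chain in its own right, because whether a sequence of images is cyclic depends only on the relative order of the points of the domain. Since a chain of size $i$ is order-isomorphic to $\Omega_i$ and orientation-preservation is invariant under order isomorphism, the number of idempotents of $\POP_n$ of rank $k$ with a prescribed domain $D$ of size $i$ equals $|E(J_k^{\OP_i})|$, independently of the choice of $D$.

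It then remains to assemble the count: a rank-$k$ idempotent has domain of some size $i$ with $k\leqslant i\leqslant n$, there are $\binom{n}{i}$ such domains, and each contributes $|E(J_k^{\OP_i})|$ idempotents, so
\[
|E(J_k^{\POP_n})|=\sum_{i=k}^{n}\binom{n}{i}\,|E(J_k^{\OP_i})|.
\]
Substituting the formula of Lemma~\ref{EJOPnk} for $|E(J_k^{\OP_i})|$ (its statement with $n$ replaced by $i$ and summation index renamed $j$) yields the claimed double sum; observe that the $i=k$ term equals $1$, consistent with the identity being the only rank-$k$ idempotent of $\OP_k$. I expect the only subtle point to be the verification that orientation-preservation of a partial transformation is an intrinsic property of its domain as an abstract chain, so that restriction to the domain really does set up a bijection onto $E(J_k^{\OP_i})$; the remainder is routine reindexing of the formula from Lemma~\ref{EJOPnk}.
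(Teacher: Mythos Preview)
Your proposal is correct and follows essentially the same approach as the paper: both reduce to counting idempotents of $\OP_i$ by stratifying over the domain size $i$, arriving at $|E(J_k^{\POP_n})|=\sum_{i=k}^{n}\binom{n}{i}\,|E(J_k^{\OP_i})|$ and then invoking Lemma~\ref{EJOPnk}. The only cosmetic difference is that the paper also obtains the $k=1$ case from this same sum via $\sum_{i=1}^{n}i\binom{n}{i}=n2^{n-1}$, whereas you count it directly.
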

\begin{proof}
Given an idempotent $e$ of $\POP_n$, we have 
$\im(e)=\fix(e)\subseteq\dom(e)$ and so we may consider $e\in\OP(\dom(e))$. Hence, an element 
$e$ of $\POP_n$ is an idempotent with rank $k$ if and only if $e$ is an idempotent of $\OP(X)$ with rank $k$, for some subset $X$ of $\Omega_n$ such that $|X|\ge k$. 
Thus, the number of idempotents of $\POP_n$ with rank $k$ is 
$
\sum_{i=k}^{n}\binom{n}{i}|E(J_k^{\OP_i})|, 
$
and so the result follows, by Lemma \ref{EJOPnk} and noticing that $\sum_{i=1}^{n}i\binom{n}{i} =n2^{n-1}$. 
\end{proof}

\smallskip 

The previous lemma allows us to express the number of endomorphisms of $\POP_n$ of types $3$ and $7$ by 
$$
\begin{array}{cl}
& \sum_{k=0}^n|E(J_k^{\POP_n})|(1+\sum_{i=1}^{k}\binom{k}{i}(\tau^{2i}+\theta^{2i}-i^{2}+i-2)) \vspace*{.3em}\\
= & \sum_{k=0}^n|E(J_k^{\POP_n})| + \sum_{k=0}^n|E(J_k^{\POP_n})| \sum_{i=1}^{k}\binom{k}{i}(\tau^{2i}+\theta^{2i}-i^{2}+i-2) \vspace*{.3em}\\
= & |E(\POP_n)| + |E(J_1^{\POP_n})|  + \sum_{k=2}^n|E(J_k^{\POP_n})| \sum_{i=1}^{k}\binom{k}{i}(\tau^{2i}+\theta^{2i}-i^{2}+i-2) \vspace*{.3em}\\
= & 1+\sum_{k=1}^{n}\binom{n}{k}(\tau^{2k}+\theta^{2k}-k^{2}+k-2) + n2^{n-1} + \vspace*{.3em}\\
& \qquad\qquad+\sum_{k=2}^n (\sum_{i=k}^{n}(\binom{n}{i}\sum_{j=1}^{i-k+1}j^{2}\binom{i-j+k-2}{2k-3}))(\sum_{i=1}^{k}\binom{k}{i}(\tau^{2i}+\theta^{2i}-i^{2}+i-2))
\end{array}
$$ 

\medskip 

Let us now count the number of endomorphisms of $S\in\{\POPI_n,\POP_n\}$ of type $4$. 
Since the number of maximal subgroups of a semigroup is the number of idempotents of the semigroup and maximal subgroups contained in the same 
$\mathscr{J}$-class are isomorphic, the number of endomorphisms of $S$ of type $4$ is 
$$
\sum_{k=2}^n|E(J_k^S)|\varepsilon(n,k), 
$$ 
where $\varepsilon(n,k)$ is the number of elements of a cyclic group of order $k$ (i.e. the maximal subgroups of $J_k^S$) 
which have as order a divisor of $n$ greater than one, i.e. 
$$
\varepsilon(n,k)=|\{t\in\{1,2,\ldots,k-1\}\mid \mbox{$\frac{k}{\gcd(t,k)}$ divides $n$}\}|. 
$$

Let $S\in\{\POPI_n,\POP_n\}$. 
Since there are $2n$ endomorphisms of $S$ of type $1$,  
$n\varphi(n)$ endomorphisms of $S$ of type $2$, 
and the remaining endomorphisms of $S$ are of types $3$, $4$ and $7$, 
we have:  

\begin{theorem}
For $n\geqslant3$, the semigroups $\POPI_n$ and $\POP_n$ have 
$$
2n+n\varphi(n)+3^n+ \sum_{k=2}^n\binom{n}{k}\varepsilon(n,k), 
$$
and 
$$
\begin{array}{l}\displaystyle 
1+2n+n\varphi(n) + n2^{n-1}  + \sum_{k=1}^{n}\binom{n}{k}(\tau^{2k}+\theta^{2k}-k^{2}+k-2)\, + \\ \displaystyle 
\qquad\qquad
+\sum_{k=2}^n    
\left( 
\sum_{i=k}^{n}\binom{n}{i}\sum_{j=1}^{i-k+1}j^{2}\binom{i-j+k-2}{2k-3}\left(\varepsilon(n,k)+\sum_{i=1}^{k}\binom{k}{i}(\tau^{2i}+\theta^{2i}-i^{2}+i-2)\right) 
\right)
\end{array}
$$
endomorphisms, respectively. 
\end{theorem}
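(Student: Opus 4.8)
The plan is to read off both counts from the classification of endomorphisms in Theorem~\ref{endomain}. For $S\in\{\POPI_n,\POP_n\}$ only the types $1,2,3,4$ and $7$ can occur, since types $5$ and $6$ of that theorem require $S$ to be one of $\OR_n,\PORI_n,\POR_n$. The first step is to check that these five types are pairwise disjoint, so that $|\endo(S)|$ is just the sum of their cardinalities: a type-$1$ endomorphism is exactly an injective one; a type-$2$ endomorphism has $\ker(\phi)=\rho^{n-1}_\pi$ with $\pi$ the universal congruence (it equals some $\phi^S_\sigma$, whose kernel is that of $\phi_{\scriptscriptstyle 0}|_S$), whereas for types $3,4,7$ the kernel is $\rho^n_\pi$ or the universal congruence; and among these last three, type $7$ consists of the constant maps, type $3$ has $g\phi$ a non-identity idempotent with $\phi$ not constant, and type $4$ has $g\phi$ a group-element of order $p>1$. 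Hence no endomorphism is of two different types.

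Next I would tally the types one at a time, leaning on the material already assembled. For type $1$: $\auto(S)\simeq\D_{2n}$ by \cite[Theorem 5.5]{Araujo&etal:2011}, so there are $2n$ of them. For type $2$: these are precisely the maps $\phi^S_\sigma$ with $\sigma\in\mathcal{N}_n=\nor_{\Sym_n}(J_n^S)$ (Lemma~\ref{nn}), and since distinct $\sigma$ yield distinct $\phi^S_\sigma$ (as shown before Lemma~\ref{kerneln-1}), there are $|\mathcal{N}_n|=n\varphi(n)$ of them. For types $3$ and $7$ together: such an endomorphism is exactly the map sending $J_n^S$ to $\{e\}$ and $I_{n-1}^S$ to $\{f\}$ for a pair $e,f\in E(S)$ with $ef=fe=f$ (with $e\neq f$ in type $3$, $e=f$ in type $7$), and conversely each such pair determines an endomorphism; so their number is $\sum_{e\in E(S)}|E_S(e)|$, already computed above to be $3^n$ for $\POPI_n$ and $|E(\POP_n)|+|E(J_1^{\POP_n})|+\sum_{k=2}^n|E(J_k^{\POP_n})|\sum_{i=1}^k\binom{k}{i}(\tau^{2i}+\theta^{2i}-i^2+i-2)$ for $\POP_n$. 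Finally, a type-$4$ endomorphism is determined by the element $g_0=g\phi$, which may be any element of order a divisor $p>1$ of $n$ lying in one of the $|E(J_k^S)|$ cyclic maximal subgroups of order $k$, the number of admissible such $g_0$ inside a fixed such subgroup being exactly $\varepsilon(n,k)$; hence there are $\sum_{k=2}^n|E(J_k^S)|\varepsilon(n,k)$ of them.

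It then remains to add the five contributions. For $\POPI_n$, substituting $|E(J_k^{\POPI_n})|=\binom{n}{k}$ gives at once $2n+n\varphi(n)+3^n+\sum_{k=2}^n\binom{n}{k}\varepsilon(n,k)$. For $\POP_n$, I would substitute the known closed forms, namely $|E(\POP_n)|=1+\sum_{k=1}^{n}\binom{n}{k}(\tau^{2k}+\theta^{2k}-k^2+k-2)$, $|E(J_1^{\POP_n})|=n2^{n-1}$, and the expression for $|E(J_k^{\POP_n})|$ from Lemma~\ref{EJPOPnk}, and then merge the type-$4$ sum with the type-$3$/$7$ sum over $k\geqslant2$ by factoring out the common coefficient $\sum_{i=k}^{n}\binom{n}{i}\sum_{j=1}^{i-k+1}j^2\binom{i-j+k-2}{2k-3}$; this yields the displayed formula.

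The only genuinely delicate points are confirming that each parametrization used ($\sigma\mapsto\phi^S_\sigma$, $(e,f)\mapsto\phi$, $g_0\mapsto\phi$) is injective, so that nothing is over-counted, and carrying out the (somewhat long) rearrangement that brings the $\POP_n$ total into the compact form in the statement; everything else is a direct bookkeeping against Theorem~\ref{endomain} and the lemmas preceding it.
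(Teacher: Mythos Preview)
Your proposal is correct and follows essentially the same approach as the paper: classify via Theorem~\ref{endomain}, note that only types $1,2,3,4,7$ occur for $S\in\{\POPI_n,\POP_n\}$, and add the counts already computed in the section ($2n$ for type~$1$, $n\varphi(n)$ for type~$2$, $\sum_{e\in E(S)}|E_S(e)|$ for types~$3$ and~$7$, and $\sum_{k=2}^n|E(J_k^S)|\varepsilon(n,k)$ for type~$4$). The paper takes disjointness of the types for granted, whereas you spell it out; one small slip there is that in type~$3$ the idempotent $e=g\phi$ need not be a non-identity idempotent (one can have $e=1$), but the distinction from type~$4$ is simply that $g\phi$ is idempotent in type~$3$ versus of order $p>1$ in type~$4$, so the argument goes through unchanged.
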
 

\medskip 

Next, for $S\in\{\PORI_n,\POR_n\}$, we count the number of endomorphisms of $S$ of type 5. First, we recall some properties of dihedral groups. 

\smallskip 

Let $k\geqslant3$ and let 
$
\D_{2k}=\langle x,y\mid x^k=y^2=1, xy=yx^{k-1}\rangle=\{1,x,x^2,\ldots,x^{k-1}, y,yx,yx^2,\ldots,yx^{k-1}\} 
$
be a dihedral group of order $2k$. Then, it is a routine matter to check that the 
subgroup $\langle x^i,yx^j\rangle$ of $\D_{2k}$ is a dihedral group of order $2\frac{k}{\gcd(i,k)}$ 
(observe that $x^i$ and $yx^j$ have orders $\frac{k}{\gcd(i,k)}$ and $2$, respectively), 
for all $1\leqslant i\leqslant k-1$ and $0\leqslant j\leqslant k-1$. 
On the other hand, for all $0\leqslant i,j\leqslant k-1$, we have $\langle yx^i,yx^j\rangle=\langle x^{|i-j|},yx^j\rangle$ and so  
 $\langle yx^i,yx^j\rangle$ is a dihedral group of order $2\frac{k}{\gcd(|i-j|,k)}$, if $i\neq j$, and a cyclic group of order $2$, if $i=j$.  
 In particular, for an even $k$, $\langle yx^i,yx^j\rangle$ is a dihedral group of order $2\cdot 2$ if and only if $|i-j|=\frac{k}{2}$. 

\smallskip 

Let $k\geqslant3$ and let $H_k$ be a group $\mathscr{H}$-class of $S$ contained in $J_k^S$. Then $S$ admits $k\mspace{1.5mu}\varepsilon(n,k)$ endomorphisms 
$\phi$ of type $5$ such that $D_{2n}\phi\subseteq H_k$, if $n$ is odd or $k$ is odd, and $k\mspace{1.5mu}\varepsilon(n,k)+2k$ endomorphisms 
$\phi$ of type $5$ such that $D_{2n}\phi\subseteq H_k$, if $n$ and $k$ are both even. 
Thus, the number of endomorphisms of $S$ of type 5 is 
$$
\sum_{k=3}^n|E(J_k^S)|k(\varepsilon(n,k)+2\delta(n,k)), 
$$ 
where $\delta(n,k)=\frac{(-1)^{n+k}+(-1)^n+(-1)^k+1}{4}=
\left\{\begin{array}{ll}
1 & \mbox{if $n$ and $k$ are even}\\
0 & \mbox{otherwise.}
\end{array}\right.$

\medskip 

Finally, for $S\in\{\OR_n,\PORI_n,\POR_n\}$, we determine the number of endomorphisms of $S$ of type 6. 

\smallskip 

For any subset $X$ of $S$, denote by $G_X(2)$ be the set of all group-elements of order $2$ of $S$ that belong to $X$. 
For $h_0\in G_S(2)$, define 
$$
E_S(h_0)=\{f\in E(S)\mid h_0f=fh_0=f\}
$$
(notice that $fh_0=f$ if and only if $\fix(f)\subseteq\fix(h_0)$, for $h_0\in S$ and $f\in E(S)$). 
Then, the number of endomorphisms of $S$ of type 6 is equal to  
$$
\sum_{h_0\in G_S(2)}((-1)^n+2)|E_S(h_0)|. 
$$

Recall that $|\fix(s)|\leqslant2$, for all non-idempotent group-element $s$ of $S$. 
Let us state this property with more details.  

Let $h_0$ be a non-idempotent group-element of $S$. Then $k=\rank(h_0)\geqslant2$. If $h_0$ preserves the orientation then $\fix(h_0)=\emptyset$. 
Next, suppose that $h_0$ does not preserve the orientation. Then $h_0\in G_S(2)$, $k\geqslant3$ and, if $\im(h_0)=\{i_1<i_2<\cdots<i_k\}$, 
we have 
\begin{equation}\label{h0}
h_0|_{\im(h_0)}=
\left(\begin{matrix}
i_1&\cdots &i_\ell & i_{\ell+1}&\cdots&i_k\\
i_\ell&\cdots &i_1 & i_k&\cdots&i_{\ell+1}
\end{matrix}\right)
\end{equation}
(i.e. $i_th_0=i_{\ell-t+1}$, for $1\leqslant t\leqslant \ell$, and $i_th_0=i_{k-t+\ell+1}$, for $\ell+1\leqslant t\leqslant k$),  
for some $0\leqslant \ell\leqslant k-1$, and so 
$$
|\fix(h_0)|=\left\{
\begin{array}{lll}
0 & \mbox{if $\ell$ and $k-\ell$ are both even} & \mbox{(in this case, $k$ is even)} \\
2 & \mbox{if $\ell$ and $k-\ell$ are both odd} & \mbox{(in this case, $k$ is even)} \\
1 & \mbox{otherwise} & \mbox{(in this case, $k$ is odd).} 
\end{array}
\right. 
$$
Take $f\in E_S(h_0)$. Then, in particular, $h_0f=f$. 
So, for $x\in\Omega_n$, we have $x\in\dom(f)$ if and only if $xh_0\in\dom(f)$ and, in this case, $xf=(xh_0)f$. 
Hence, for $1\leqslant t\leqslant \ell$ such that $i_t\in\dom(f)$, 
we have $i_{\ell-t+1}\in\dom(f)$ and $i_{\ell-t+1}f=i_tf$ and, 
similarly,  for $\ell+1\leqslant t\leqslant k$ such that $i_t\in\dom(f)$, 
we have $i_{k-t+\ell+1}\in\dom(f)$ and $i_{k-t+\ell+1}f=i_tf$. 
Thus $f$ is perfectly defined by its images of $\dom(f)\cap\{i_{\lceil\frac{\ell+1}{2}\rceil},\ldots,i_\ell,i_{\ell+1},\ldots,i_{\ell+\lceil\frac{k-\ell}{2}\rceil}\}$ and 
noticing that 
$$
\dom(f)=\bigcup\left\{ xh_0^{-1}\cup (xh_0)h_0^{-1}\mid x \in \dom(f)\cap\{i_{\lceil\frac{\ell+1}{2}\rceil},\ldots,i_\ell,i_{\ell+1},\ldots,i_{\ell+\lceil\frac{k-\ell}{2}\rceil}\} \right\}. 
$$

\smallskip 

Now, let $k\geqslant2$ and $H_k$ be a group $\mathscr{H}$-class of $S$ contained in $J_k^S$. 

If $k=2$ then all elements of $H_k$ preserve the orientation and $H_k$ has a single element of order $2$ ($|H_k|=2$). 

So, suppose that $k\geqslant3$. Then $|H_k|=2k$ and $H_k$ has $\frac{(-1)^k+1}{2}$ elements of order $2$ preserving the orientation 
and $k$ elements of order $2$ that do not preserve the orientation.  
Therefore, if $k$ is odd then all of the $k$ elements of order $2$ of $H_k$ fix exactly one element. 
On the other hand, 
if $k$ is even then $\frac{k}{2}$ of the $k+1$ elements of order $2$ of $H_k$ fix exactly two elements while the remaining $\frac{k}{2}+1$ fix none.

\medskip 

Let $h_0$ be a group-element of $\PORI_n$ with order $2$. Then, clearly, 
$$
|E_{\PORI_n}(h_0)| =
\left\{
\begin{array}{ll}
1 & \mbox{if $|\fix(h_0)|=0$}\\
2 & \mbox{if $|\fix(h_0)|=1$}\\
4 & \mbox{if $|\fix(h_0)|=2$.}
\end{array}
\right.
$$

\smallskip 

Next, suppose that $h_0$ is a group-element of $\OR_n$ with order $2$. Then, clearly, 
$$
|E_{\OR_n}(h_0)| =
\left\{
\begin{array}{ll}
0 & \mbox{if $|\fix(h_0)|=0$}\\
1 & \mbox{if $|\fix(h_0)|=1$.}
\end{array}
\right.
$$
On the other hand, suppose that  $|\fix(h_0)|=2$. 
Then $k$ is even and, by considering $h_0$ as in (\ref{h0}), $\ell$ and $k-\ell$ must be both odd and $h_0$ fixes 
$i_\frac{\ell+1}{2}$ and $i_{\ell+\frac{k-\ell+1}{2}}$.
Since each element $f$ of $E_{\OR_n}(h_0)$ is perfectly defined by its images of 
$\{i_{\frac{\ell+1}{2}},\ldots,i_\ell,i_{\ell+1},\ldots,i_{\ell+\frac{k-\ell+1}{2}}\}$  
and $f$ preserves the orientation (since it is idempotent), 
we have $\frac{k}{2}$ such elements with rank $2$ 
(with image and fixed points $i_\frac{\ell+1}{2}$ and $i_{\ell+\frac{k-\ell+1}{2}}$) and $2$ elements with rank $1$ 
(one with image $i_\frac{\ell+1}{2}$ and the other with image $i_{\ell+\frac{k-\ell+1}{2}}$). Thus 
$$
|E_{\OR_n}(h_0)| =\frac{k}{2}+2.
$$ 

\smallskip 

Lastly, let $h_0$ be a group-element of $\POR_n$ with order $2$ under the conditions of (\ref{h0}). 

If $|\fix(h_0)|=0$ then $E_{\POR_n}(h_0)=\{\emptyset\}$ and so $|E_{\POR_n}(h_0)|=1$. 

On the other hand, for an element $f\in E_{\POR_n}(h_0)$, we have 
$i_t\in\dom(f)$ if and only if $i_{\ell-t+1}\in\dom(f)$, for $1\leqslant t\leqslant \ell$, 
and $i_t\in\dom(f)$ if and only if $i_{k-t+\ell+1}\in\dom(f)$, for $\ell+1\leqslant t\leqslant k$, 
which allows us to deduce: 
\begin{enumerate}
\item if $|\fix(h_0)|=1$ then $k$ is odd and there are 
$
\sum_{i=0}^{\frac{k-1}{2}}\binom{\frac{k-1}{2}}{i}
$
possible domains for an element $f\in E_{\POR_n}(h_0)$ with rank $1$; 

\item if $|\fix(h_0)|=2$ then $k$ is even and there are 
$
\sum_{i=0}^{\frac{k}{2}}\binom{\frac{k}{2}}{i}
$
possible domains for an element $f\in E_{\POR_n}(h_0)$ with rank $1$ 
and 
$
\sum_{i=0}^{\frac{k}{2}-1}\binom{\frac{k}{2}-1}{i}
$
possible domains for an element $f\in E_{\POR_n}(h_0)$ with rank $2$. 
\end{enumerate} 
Thus, we obtain 
$$
|E_{\POR_n}(h_0)| =
\left\{
\begin{array}{ll}
1+  \sum_{i=0}^{\frac{k-1}{2}}\binom{\frac{k-1}{2}}{i} & \mbox{if $|\fix(h_0)|=1$}\vspace*{.3em}\\
1+ 2\sum_{i=0}^{\frac{k}{2}}\binom{\frac{k}{2}}{i} + \sum_{i=0}^{\frac{k}{2}-1}\binom{\frac{k}{2}-1}{i}\frac{2+2i}{2} & \mbox{if $|\fix(h_0)|=2$,}
\end{array}
\right.
$$
i.e. 
$$
|E_{\POR_n}(h_0)| =
\left\{
\begin{array}{ll}
1+ 2^{\frac{k-1}{2}} & \mbox{if $|\fix(h_0)|=1$}\vspace*{.3em}\\
1+ (\frac{k}{2}+9)2^{\frac{k}{2}-2}  & \mbox{if $|\fix(h_0)|=2$.}
\end{array}
\right.
$$

\medskip 

Now, let $H_k$ be a fixed group $\mathscr{H}$-class of $S$ contained in $J_k^{S}$, for $2\leqslant k\leqslant n$. 
Then, since $|E_S(h_0)|$ depends only on the rank and the number of fixed points of $h_0$ for all $h_0\in G_S(2)$, 
we can rewrite the number of endomorphisms of $S$ of type $6$ as follows: 
$$
\begin{array}{rcl}
\sum_{h_0\in G_S(2)}((-1)^n+2)|E_S(h_0)| &= & 
((-1)^n+2)\sum_{h_0\in G_S(2)}|E_S(h_0)| \vspace*{.3em}\\ 
& = & 
((-1)^n+2)\sum_{k=2}^n|E(J_k^S)|\sum_{h_0\in G_{H_k}(2)}|E_S(h_0)|. 
\end{array}
$$

\medskip 

Let us consider the semigroup $\PORI_n$.  
Since $\sum_{h_0\in G_{H_2}(2)}|E_{\PORI_n}(h_0)|=1$ and, for $k\geq3$, 
$$
\sum_{h_0\in G_{H_k}(2)}|E_{\PORI_n}(h_0)|= 
\left\{
\begin{array}{ll}
2k & \mbox{if $k$ is odd}\\
(1+\frac{k}{2})+4\frac{k}{2} & \mbox{if $k$ is even} 
\end{array}
\right.
=~ 2k + \frac{(-1)^k+1}{2}\left(1+\frac{k}{2}\right), 
$$ 
then the number of endomorphisms of $\PORI_n$ of type 6 is equal to 
$$
((-1)^n+2)\left(
\binom{n}{2}+\sum_{k=3}^n\binom{n}{k}\left(2k + \frac{(-1)^k+1}{2}\left(1+\frac{k}{2}\right)\right)
\right), 
$$ 
i.e. 
$$
((-1)^n+2)\left((9n+4)2^{n-3}-\frac{5}{2}n(n-1)-2n-1\right). 
$$ 

\smallskip 

Next, consider the semigroup $\OR_n$. 
Since $\sum_{h_0\in G_{H_2}(2)}|E_{\OR_n}(h_0)|=0$ and, for $k\geq3$, 
$$
\sum_{h_0\in G_{H_k}(2)}|E_{\OR_n}(h_0)|= 
\left\{
\begin{array}{ll}
k & \mbox{if $k$ is odd}\\
\frac{k}{2}(\frac{k}{2}+2) & \mbox{if $k$ is even} 
\end{array}
\right.
=~ k + \frac{(-1)^k+1}{8} k^2, 
$$ 
then, by Lemma \ref{EJOPnk}, the number of endomorphisms of $\OR_n$ of type 6 is equal to 
$$
((-1)^n+2)
\sum_{k=3}^n  \sum_{i=1}^{n-k+1}i^{2}\binom{n-i+k-2}{2k-3}  \left(k + \frac{(-1)^k+1}{8} k^2\right).  
$$ 

\smallskip 

Finally, consider the semigroup $\POR_n$. 
Since $\sum_{h_0\in G_{H_2}(2)}|E_{\POR_n}(h_0)|=1$ and, for $k\geq3$, 
$$
\begin{array}{rcl} 
\sum_{h_0\in G_{H_k}(2)}|E_{\POR_n}(h_0)| & = &  
\left\{
\begin{array}{ll}
k (1+2^{\frac{k-1}{2}}) & \mbox{if $k$ is odd}\\
\frac{k}{2}(1+ (\frac{k}{2}+9)2^{\frac{k}{2}-2}) + \frac{k}{2} +1& \mbox{if $k$ is even} 
\end{array}
\right. \vspace*{.5em}\\
& = & k + k 2^{\lfloor\frac{k}{2}\rfloor}   +  \frac{(-1)^k+1}{2} (1 + \frac{k^2+2k}{16}2^{\lfloor\frac{k}{2}\rfloor}), 
\end{array} 
$$ 
then, by Lemma \ref{EJPOPnk}, the number of endomorphisms of $\POR_n$ of type 6 is equal to 
\begin{align*}
((-1)^n+2)\Bigg( \sum_{i=2}^{n}\binom{n}{i}\frac{i^2(i^2-1)}{12} + \hspace*{30em} \\   
 + \sum_{k=3}^{n} \sum_{i=k}^{n}
\binom{n}{i}\sum_{j=1}^{i-k+1}j^{2}\binom{i-j+k-2}{2k-3}
\bigg( k + k 2^{\lfloor\frac{k}{2}\rfloor}   +  \frac{(-1)^k+1}{2} \bigg(1 + \frac{k^2+2k}{16}2^{\lfloor\frac{k}{2}\rfloor}\bigg) \bigg) 
\Bigg) 
\end{align*} 

\medskip 

Now, we can count the total number of endomorphisms of $\OR_n$, $\PORI_n$ and $\POR_n$. 

\smallskip 

Since the number of endomorphisms of types 1, 3 and 7 of $\OR_n$ and $\OP_n$ coincide, we immediately have:  

\begin{theorem}
For $n\geqslant3$, the semigroup $\OR_n$ has 
\begin{align*}
3n + \sum_{k=2}^{n}\sum_{i=1}^{n-k+1}i^{2}\binom{n-i+k-2}{2k-3}(F_{2k-1}+F_{2k+1}-k^2+k-2) 
\,+  \hspace*{10em} \\ +\,  
((-1)^n+2)
\sum_{k=3}^n  \sum_{i=1}^{n-k+1}i^{2}\binom{n-i+k-2}{2k-3}  \bigg(k + \frac{(-1)^k+1}{8} k^2\bigg) 
\end{align*} 
endomorphisms. 
\end{theorem}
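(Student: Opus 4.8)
The plan is to partition $\endo(\OR_n)$ according to the seven types of Theorem \ref{endomain} and add the per-type counts already established in this section. First I would observe that, since $\OR_n$ belongs to none of the families $\{\POPI_n,\POP_n\}$, $\{\POPI_n,\POP_n,\PORI_n,\POR_n\}$, $\{\PORI_n,\POR_n\}$, no endomorphism of $\OR_n$ can be of type 2, 4 or 5; hence every endomorphism of $\OR_n$ has type 1, 3, 6 or 7. Next I would check that these four classes are pairwise disjoint: an endomorphism of type 1 is an automorphism, so bijective and $J_n^{\OR_n}\phi=\D_{2n}$ has $2n$ elements; one of type 3 sends $J_n^{\OR_n}$ to a single idempotent $e$ and $I_{n-1}^{\OR_n}$ to $\{f\}$ with $f\neq e$; one of type 7 is constant; and one of type 6 sends $J_n^{\OR_n}$ onto $\langle h_0\rangle$, a group of order $2$ (in each of the cases 6(a)--(c) of Theorem \ref{endomain}, $h_0$ has order $2$). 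Since $n\geq3$, these descriptions are mutually incompatible, so $|\endo(\OR_n)|$ is the sum of the numbers of endomorphisms of the four types.

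Then I would assemble the three nonzero contributions, each obtained above. By Theorem \ref{autoor}, $\auto(\OR_n)\simeq\D_{2n}$, which gives $2n$ endomorphisms of type 1. For types 3 and 7 together, their number is $\sum_{k=1}^{n}|E(J_k^{\OR_n})|(F_{2k-1}+F_{2k+1}-k^2+k-2)$; since $E(\OR_n)=E(\OP_n)$, Lemma \ref{EJOPnk} yields $|E(J_1^{\OR_n})|=n$ and $|E(J_k^{\OR_n})|=\sum_{i=1}^{n-k+1}i^2\binom{n-i+k-2}{2k-3}$ for $k\geq2$, and the $k=1$ term simplifies to $n$ because $F_1+F_3-2=1$; so types 3 and 7 contribute $n+\sum_{k=2}^{n}\sum_{i=1}^{n-k+1}i^2\binom{n-i+k-2}{2k-3}(F_{2k-1}+F_{2k+1}-k^2+k-2)$. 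Finally, the type 6 count for $\OR_n$ was computed above to be $((-1)^n+2)\sum_{k=3}^{n}\sum_{i=1}^{n-k+1}i^2\binom{n-i+k-2}{2k-3}(k+\frac{(-1)^k+1}{8}k^2)$.

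Adding the three expressions, $2n$ from type 1 together with the $n$ coming from the $k=1$ part of types 3 and 7 give the leading $3n$, while the two remaining sums are exactly the two sums in the statement; this yields the claimed formula. The part needing the most care --- though it is not really hard --- is the disjointness-and-exhaustiveness check against the type definitions of Theorem \ref{endomain}, together with keeping the summation ranges straight: in particular one must note that type 6 contributes nothing in ranks $1$ and $2$, consistent with $\sum_{h_0\in G_{H_2}(2)}|E_{\OR_n}(h_0)|=0$, which is why its sum starts at $k=3$. Beyond this accounting no genuine obstacle arises.
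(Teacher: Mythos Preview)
Your proposal is correct and follows essentially the same approach as the paper: partition the endomorphisms of $\OR_n$ by the types of Theorem \ref{endomain}, observe that only types 1, 3, 6 and 7 occur, and add the per-type counts already obtained (the paper phrases the first part as ``the number of endomorphisms of types 1, 3 and 7 of $\OR_n$ and $\OP_n$ coincide'' and then simply appends the type~6 term). Your explicit disjointness check is a welcome addition but not a departure from the paper's argument.
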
 

Also the number of endomorphisms of types 1, 2, 3 and 7 of $\PORI_n$ and $\POPI_n$ [respectively, $\POR_n$ and $\POP_n$] coincide. 
Therefore, we get:  

\begin{theorem}
For $n\geqslant3$, the semigroups $\PORI_n$ and $\POR_n$ have 
$$
n\varphi(n)+3^n+ \sum_{k=3}^n\binom{n}{k}k(\varepsilon(n,k)+2\delta(n,k)) 
+ ((-1)^n+2)\big((9n+4)2^{n-3}-\frac{5}{2}n(n-1)-1\big) - 2((-1)^n+1)n, 
$$
and 
$$
\begin{array}{l}\displaystyle 
1+2n+n\varphi(n) + n2^{n-1}  + \sum_{k=1}^{n}\binom{n}{k}(\tau^{2k}+\theta^{2k}-k^{2}+k-2)\, 
+ \frac{(-1)^n+7}{12}\sum_{i=2}^{n}\binom{n}{i}i^2(i^2-1)
\, + \hspace*{0em}\\ \displaystyle 
\hspace*{2em} +\sum_{k=3}^n    
\sum_{i=k}^{n}\binom{n}{i}\sum_{j=1}^{i-k+1}j^{2}\binom{i-j+k-2}{2k-3}
\Bigg(
k(\varepsilon(n,k)+2\delta(n,k))+
\sum_{i=1}^{k}\binom{k}{i}(\tau^{2i}+\theta^{2i}-i^{2}+i-2)  \, +   \\ \displaystyle 
\hspace*{19em} + ((-1)^n+2) \bigg( k + k 2^{\lfloor\frac{k}{2}\rfloor}   
+ \,  \frac{(-1)^k+1}{2} \bigg(1 + \frac{k^2+2k}{16}2^{\lfloor\frac{k}{2}\rfloor}\bigg) \bigg) 
\Bigg)
\end{array}
$$
endomorphisms, respectively. 
\end{theorem}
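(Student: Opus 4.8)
The plan is to add, over the seven families of endomorphisms listed in Theorem \ref{endomain}, the number of endomorphisms of $S$ belonging to each family, for $S\in\{\PORI_n,\POR_n\}$. Family $4$ does not occur for these two semigroups (property $4$ explicitly requires $S\in\{\POPI_n,\POP_n\}$), so only families $1$, $2$, $3$, $5$, $6$ and $7$ contribute. First I would note that these families are pairwise disjoint, since the family of an endomorphism $\phi$ is read off from $\ker(\phi)$: family $1$ corresponds to the identity congruence, family $7$ to the universal congruence, family $2$ to $\rho^{n-1}_\pi$ with $\pi$ the universal congruence on $J^S_{n-1}$, and families $3$, $5$, $6$ to congruences $\rho^n_\pi$, these last three being further separated by the normal subgroup $\ker(\phi|_{J^S_n})$ of $\D_{2n}$ in the way described in Lemma \ref{rhon} (family $3$ being the non-constant endomorphisms with $\ker(\phi|_{J^S_n})=J^S_n$). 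Hence no endomorphism is counted twice, and it suffices to add the six per-family totals.

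For both $S=\PORI_n$ and $S=\POR_n$, there are $2n$ endomorphisms of family $1$, since $\auto(S)\simeq\D_{2n}$ by Theorem \ref{autoor}, and $|\mathcal{N}_n|=n\varphi(n)$ endomorphisms of family $2$ by Lemmas \ref{kerneln-1} and \ref{nn}. The endomorphisms of families $3$ and $7$ together number $\sum_{e\in E(S)}|E_S(e)|$; using $E(\PORI_n)=E(\POPI_n)$, $E(\POR_n)=E(\POP_n)$, that $|E_S(e)|$ depends only on $\rank(e)$, and the values $|E(J^{\POPI_n}_k)|=\binom{n}{k}$ and $|E(J^{\POR_n}_k)|=|E(J^{\POP_n}_k)|$ from Lemma \ref{EJPOPnk}, this total is $3^n$ for $\PORI_n$ and the already displayed expression $|E(\POP_n)|+|E(J_1^{\POP_n})|+\sum_{k=2}^n|E(J_k^{\POP_n})|\sum_{i=1}^k\binom{k}{i}(\tau^{2i}+\theta^{2i}-i^2+i-2)$ for $\POR_n$. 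To these I would add the number of endomorphisms of family $5$, namely $\sum_{k=3}^n|E(J_k^S)|\,k(\varepsilon(n,k)+2\delta(n,k))$, and the number of endomorphisms of family $6$, namely $((-1)^n+2)\sum_{k=2}^n|E(J_k^S)|\sum_{h_0\in G_{H_k}(2)}|E_S(h_0)|$; both of these have already been evaluated explicitly above, separately for $\PORI_n$ and for $\POR_n$.

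It remains to sum these six quantities and rewrite the result in the stated shape. For $\PORI_n$ the only manipulation is to absorb the $2n$ of family $1$ into the family-$6$ term, using the identity $2n+((-1)^n+2)(-2n-1)=((-1)^n+2)(-1)-2((-1)^n+1)n$; this produces the first displayed formula. For $\POR_n$ the rewriting is more laborious: one peels off the $k=0$ and $k=1$ parts of the families-$3$-and-$7$ count (yielding the summands coming from $|E(\POP_n)|=1+\sum_{k=1}^n\binom{n}{k}(\tau^{2k}+\theta^{2k}-k^2+k-2)$ and from $|E(J_1^{\POP_n})|=n2^{n-1}$), then gathers the $k=2$ contributions of families $3$, $7$ and $6$ into the single coefficient $\frac{(-1)^n+7}{12}$ of $\sum_{i=2}^n\binom{n}{i}i^2(i^2-1)$ --- here one uses $\sum_{i=1}^2\binom{2}{i}(\tau^{2i}+\theta^{2i}-i^2+i-2)=5$, the identity $\sum_{j=1}^{i-1}j^2(i-j)=\frac{i^2(i^2-1)}{12}$, and that the group $\mathscr{H}$-class of rank $2$ contributes $1$ to the family-$6$ sum --- and finally substitutes the value of $|E(J_k^{\POP_n})|$ from Lemma \ref{EJPOPnk} for $k\geqslant3$ and collects the remaining $k\geqslant3$ parts of families $5$, $6$ and $3$--$7$ under a single triple summation. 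I expect the main obstacle to be exactly this last piece of bookkeeping for $\POR_n$ --- correctly isolating the $k=2$ terms and keeping the parity factors $((-1)^n+2)$, $\frac{(-1)^k+1}{2}$ and $\delta(n,k)$ in their right places; everything else is routine arithmetic, and one can cross-check the final formulas at $n=3$, where $\PORI_3=\I_3$ and $\POR_3=\PT_3$, against the known cardinalities of $\endo(\I_3)$ and $\endo(\PT_3)$.
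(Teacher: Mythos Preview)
Your proposal is correct and follows exactly the paper's approach: sum the per-type counts (types $1$, $2$, $3$, $5$, $6$, $7$) already computed in Section~\ref{endonumber}, noting that types $1$, $2$, $3$, $7$ give the same totals for $\PORI_n$ as for $\POPI_n$ (respectively $\POR_n$ as for $\POP_n$), and then rearrange into the displayed form. In fact you supply more detail than the paper does on the algebraic bookkeeping (the absorption of the $2n$ into the type-$6$ term for $\PORI_n$, and the isolation of the $k=2$ pieces via $\sum_{j=1}^{i-1}j^2(i-j)=\tfrac{i^2(i^2-1)}{12}$ and $\sum_{i=1}^{2}\binom{2}{i}(\tau^{2i}+\theta^{2i}-i^2+i-2)=5$ for $\POR_n$), all of which checks out.
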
 

\section*{Appendix} 

In this paragraph, for completeness, we present a note on the endomorphisms of the groups $\C_n$ and $\D_{2n}$ ($n\geqslant3$). 

\smallskip

It is well known that $\C_n$ has $n$ endomorphisms among which $\varphi(n)$ are automorphisms. In fact, 
$$
\endo(\C_n)=\{\phi_i\mid 0\leqslant i\leqslant n-1\},
$$
where $\phi_i$ is the endomorphism of $\C_n$ induced by the mapping $g\mapsto g^i$ for $0\leqslant i\leqslant n-1$, 
and 
$$
\auto(\C_n)=\{\phi_i\mid \mbox{$1\leqslant i\leqslant n-1$ and $\gcd(i,n)=1$}\}. 
$$

\smallskip 

Now, let us consider the dihedral group $\D_{2n}$. 

First, consider the endomorphism $\phi_0$ of $\D_{2n}$ induced by the mapping $g\mapsto 1, h\mapsto 1$ and,  
for $0\leqslant i,j\leqslant n-1$, 
the endomorphism $\phi_{i,j}$ of $\D_{2n}$ induced by the mapping $g\mapsto g^i, h\mapsto hg^j$. 

If $n$ is an odd number then it is easy to show that 
$$
\endo(\D_{2n})=\{\phi_{i,j}\mid 0\leqslant i,j\leqslant n-1\}\cup\{\phi_0\} 
$$ 
and 
$$
\auto(\D_{2n})=\{\phi_{i,j}\mid \mbox{$0\leqslant i,j\leqslant n-1$ and $\gcd(i,n)=1$}\}. 
$$
Hence, for $n$ odd, $\D_{2n}$ has $n^2+1$ endomorphisms and $n\varphi(n)$ automorphisms. 

Next, suppose that $n$ is even and consider the following endomorphisms of $\D_{2n}$:
\begin{description}
\item\-- $\phi_n$ induced by the mapping $g\mapsto1, h\mapsto g^{\frac{n}{2}}$; 

\item\-- $\xi_{i,j}$ induced by the mapping $g\mapsto hg^i, h\mapsto hg^j$, for  $0\leqslant i,j\leqslant n-1$ such that $|i-j|={\frac{n}{2}}$; 

\item\-- $\xi_i$ induced by the mapping $g\mapsto hg^i, h\mapsto g^{\frac{n}{2}}$, for  $0\leqslant i\leqslant n-1$; 

\item\-- $\mu_i$ induced by the mapping $g\mapsto hg^i, h\mapsto1$, for  $0\leqslant i\leqslant n-1$; 

\item\-- $\mu_n$ induced by the mapping $g\mapsto g^{\frac{n}{2}}, h\mapsto1$; 

\item\-- $\nu_i$ induced by the mapping $g\mapsto hg^i, h\mapsto hg^i$, for  $0\leqslant i\leqslant n-1$; 

\item\-- $\nu_n$ induced by the mapping $g\mapsto g^{\frac{n}{2}}, h\mapsto g^{\frac{n}{2}}$.  
\end{description}
Then it is not difficult to prove that 
$$
\begin{array}{rcl}
\endo(\D_{2n})&=&\{\phi_{i,j}\mid 0\leqslant i,j\leqslant n-1\}
\cup\{\phi_0,\phi_n\} 
\cup\{\xi_{i,j}\mid \mbox{$0\leqslant i,j\leqslant n-1$ such that $|i-j|={\frac{n}{2}}$}\}\\ \\
&&\cup\, \{\xi_i\mid 0\leqslant i\leqslant n-1\} 
\cup\{\mu_i\mid 0\leqslant i\leqslant n\}
\cup\{\nu_i\mid 0\leqslant i\leqslant n\}. 
\end{array}
$$
and 
$$
\auto(\D_{2n})=\{\phi_{i,j}\mid \mbox{$0\leqslant i,j\leqslant n-1$ and $\gcd(i,n)=1$}\} 
$$
Thus, for $n$ even, $\D_{2n}$ has $n^2+4n+4$ endomorphisms and $n\varphi(n)$ automorphisms. 

\smallskip 

Notice that it is a routine matter to check that all mentioned endomorphisms of $\D_{2n}$ exist. 


 
\lastpage 
\end{document}